\date{\today}
\title{Regularity of pseudomeromorphic currents}
\author{Mats Andersson \& Elizabeth Wulcan}
\thanks{The authors were partially supported by the Swedish Research Council.} 
\subjclass[2000]{32A26, 32A27, 32B15, 32C30}
\address{Department of Mathematical Sciences, Division of Mathematics, University of Gothenburg and 
Chalmers University of Technology, SE-412 96 G\"{o}teborg, Sweden}
\email{matsa@chalmers.se,  wulcan@chalmers.se}
\newtheorem{thm}{Theorem}[section]
\newtheorem{lma}[thm]{Lemma}
\newtheorem{cor}[thm]{Corollary}
\newtheorem{prop}[thm]{Proposition}
\theoremstyle{definition}
\newtheorem{df}[thm]{Definition}
\theoremstyle{remark}
\newtheorem{preremark}[thm]{Remark}
\newtheorem{preex}[thm]{Example}
\newenvironment{remark}{\begin{preremark}}{\qed\end{preremark}}
\newenvironment{ex}{\begin{preex}}{\qed\end{preex}}
\newcommand{\ff}{\varphi}
\newcommand{\T}{\mathcal{T}}
\newcommand{\ASM}{\text{ASM}}
\newcommand{\F}{\mathcal{F}}
\newcommand{\cqa}{cqa}
\newcommand{\Hs}{\mathcal{H}}
\newcommand{\C}{\mathbb{C}}
\newcommand{\N}{\mathbb{N}}
\newcommand{\dbar}{\bar{\partial}}
\newcommand{\R}{\mathbb{R}}
\newcommand{\J}{\mathcal{J}}
\newcommand{\I}{\mathcal{I}}
\newcommand{\E}{\mathcal{E}}
\newcommand{\W}{\mathcal{W}}
\newcommand{\PM}{\mathcal{PM}}
\newcommand{\Q}{\mathbb{Q}}
\newcommand{\Ok}{\mathcal{O}}
\newcommand{\K}{\mathcal{K}}
\newcommand{\D}{\mathcal{D}}
\newcommand{\V}{\mathcal {V}}
\newcommand{\CH}{\mathcal{{ CH}}}
\newcommand{\M}{\mathcal{M}}
\newcommand{\Ak}{\mathcal{A}}
\newcommand{\pmm}{pseudomeromorphic }
\newcommand{\nbh}{neighborhood }
\newcommand{\1}{{\bf 1}}
\newcommand{\w}{{\wedge}}
\newcommand{\codim}{{\text{codim}\,}}
\newcommand{\Homs}{{\mathcal Hom}}
\newcommand{\Hom}{{\text{Hom}\,}}
\newcommand{\End}{{\text{End}\,}}
\newcommand{\Exts}{{\mathcal Ext\,}}
\newcommand{\Kers}{{\mathcal Ker\,}}
\newcommand{\U}{{\mathcal U}}
\newcommand{\Cu}{{\mathcal C}}
\def\newop#1{\expandafter\def\csname #1\endcsname{\mathop{\rm #1}\nolimits}}
\DeclareMathOperator{\rank}{rank}
\DeclareMathOperator{\supp}{supp}
\DeclareMathOperator{\sing}{sing}
\numberwithin{equation}{section}
\begin{document}
\nocite{*}
\bibliographystyle{plain}

\begin{abstract}
 Let $X$ be a (reduced) pure-dimensional analytic space. 
We prove that direct images of principal value and residue currents
on $X$ are smooth
outside sets that are small in a certain sense.  We also prove that 
the sheaf of such currents, provided that $X$ is smooth, is a stalkwise injective $\Ok_X$-module.
\end{abstract}

\maketitle


\section{Introduction}
 Let $f$ be a generically nonvanishing  holomorphic function on
 a reduced analytic space $X$ of pure dimension $n$.
Herrera and Lieberman, \cite{HeLi}, proved that the principal value
\begin{equation*}
\lim_{\epsilon\to 0}\int_{|f|^2>\epsilon}\frac{\xi}{f}
\end{equation*}
exists for test forms $\xi$ and defines a current $[1/f]$.
It follows that $\dbar[1/f]$ is a current with support on the zero set
$Z(f)$ of $f$; such a current is called a residue current. Coleff and
Herrera, \cite{CoHe}, introduced (non-commutative) products of principal value and
residue currents, like
\begin{equation}\label{apan2}
[1/f_1]\cdots [1/f_r] \dbar[1/f_{r+1}]\w\cdots 
\w\dbar[1/f_m].
\end{equation}
The theory of (products of) residue and principal value currents has been further developed
by a number of authors since then,  see, e.g., the references given in
\cite{AW3}.  
 
\smallskip
In order to obtain a coherent approach to questions about residue and
principal value currents were introduced in
\cite{AW2, AS} the sheaf  $\PM_X$ of {\it pseudomeromorphic currents} on $X$,  
consisting of  direct images under holomorphic mappings of products of test forms
and currents like \eqref{apan2}. 
%
Pseudomeromorphic currents play a decisive role in several recent papers, concerning, e.g., effective division problems and the
$\dbar$-equation on singular spaces; see \cite{AW3} for various references.

\smallskip
The objective of this paper is to study regularity properties of \pmm currents.
To understand the singular support of a \pmm current one is lead to
study non-proper images of analytic sets. 
Our first main result Theorem~\ref{tom} states that a \pmm current is smooth outside a set
that is small in a certain sense. 
 
Our second main result Theorem ~\ref{statut} asserts that $\PM_X$ 
is "ample" in the
sense that it is a stalkwise injective $\Ok_X$-module if $X$ is
smooth. 
The simplest
instance of this result  is that the equation $f\nu=\mu$ has a \pmm solution
for any \pmm current $\mu$ and nontrivial holomorphic function $f$. 
In particular this means that,  although smooth outside small sets,  \pmm currents can be quite singular.
The analogue of Theorem ~\ref{statut} for general currents 
is a classical result by Malgrange.

\smallskip
Combining Theorem ~\ref{statut} with the fact that $\PM_X^{0,\bullet}$ is a fine resolution of $\Ok_X$, which was noticed already in \cite{AS}, we obtain a generalization of the classical
Dickenstein-Sessa decomposition, \cite{DS},  in Section ~\ref{dickenstein}.

\smallskip

The proof of  Theorem ~\ref{statut} is based on an integral formula and
relies heavily on the regularity result
Theorem ~\ref{tom}.  Another  important ingredient is the fact from 
\cite{AW3} that one can "multiply" arbitrary \pmm currents
by 
proper direct images of principal value currents. 

\smallskip

In Section ~\ref{pmsection}  we recall some basic facts about  \pmm
currents and 
in Sections ~\ref{regprop} and
~\ref{taknock} we prove Theorem ~\ref{tom} and some variants. The last two sections are devoted
to discussion and proof of Theorem ~\ref{statut}.

\section{Pseudomomeromorphic currents}\label{pmsection}
In one complex variable $s$ one can define the principal value current $[1/s^m]$ for instance as the limit  
$$
\big[\frac{1}{s^m}\Big]=\lim_{\epsilon\to 0}\chi(|s|/\epsilon)\frac{1}{s^m},
$$
where $\chi:\R\to\R$ is a smooth function 
that is equal to $0$ in a neighborhood of
$0$ and $1$ in a neighborhood of $\infty$; we write $\chi\sim
\chi_{[1,\infty)}$ to denote such a $\chi$.  
We have the relations
\begin{equation}\label{utter}
\frac{\partial}{\partial s}\Big[\frac{1}{s^m}\Big]=-m\Big[\frac{1}{s^{m+1}}\Big], \quad s\Big[\frac{1}{s^{m+1}}\Big]=
\Big[\frac{1}{s^m}\Big].
\end{equation}
It is also well-known that
\begin{equation}\label{snok1}
\dbar\big[\frac{1}{s^{m+1}}\Big].\xi ds=\frac{2\pi i}{m!}\frac{\partial^m}{\partial s^m}\xi(0).
\end{equation}
for test functions $\xi$; in particular, $\dbar[1/s^{m+1}]$ has
support at $\{s=0\}$. It follows from \eqref{snok1} that 
\begin{equation}\label{snok2}
\bar s \dbar\big[\frac{1}{s^{m+1}}\Big]=0, \quad d\bar s\w \dbar\big[\frac{1}{s^{m+1}}\Big]=0.
\end{equation}

 \smallskip

Let $t_j$ be coordinates in an open set $\U\subset \C^N$ and let
$\alpha$ be a smooth form with compact support in $\U$.
Then 
\begin{equation}\label{1elem}
\tau=\alpha\w\Big[\frac{1}{t_1^{m_1}}\Big]\cdots\Big[\frac{1}{t_k^{m_k}}\Big]\dbar\Big[\frac{1}{t_{k+1}^{m_{k+1}}}\Big]\w\ldots
\w \dbar\Big[\frac{1}{t_r^{m_r}}\Big]
\end{equation}
is a well-defined current,
since it is the tensor product of
one-variable currents (times $\alpha$). We say that $\tau$ is an {\it elementary \pmm current}, and
we refer to $[1/t_j^{m_j}]$ and $\dbar[1/t_\ell^{m_\ell}]$ as its
{\it principal value factors} and  {\it residue factors}, respectively.
It is clear that \eqref{1elem} is commuting in the principal value factors and
anti-commuting in the residue factors.
We say the the intersection of $\U$ and the coordinate plane
$\{t_{k+1}=\cdots=t_r=0\}$ is the 
{\it elementary support} of $\tau$. Clearly the support of $\tau$ is  contained in
the intersection of the elementary support and the support of
$\alpha$. 

\begin{remark}\label{skymma} 
In view of \eqref{utter}, notice that $\partial \tau$ is an elementary
current, whose elementary support either equals the elementary
support $H$ of $\tau$ or is empty. Also $\dbar\tau$ is a finite sum of
elementary currents, whose elementary supports are either
equal to $H$ or coordinate planes of codimension $1$ in $H$, cf.,  \eqref{utter} and \eqref{snok1}. 
\end{remark}

\subsection{Definition and basic properties}\label{basal}
Let $X$ be a reduced analytic space of pure dimension $n$. 
If $X$ is smooth we say that  germ $\mu$  of a current at $x\in X$ is {\it pseudomeromorphic} at $x$,
$\mu\in \PM_x$,  if it is a finite sum
of currents of the form
\begin{equation*}
\mu=\ff_*\tau,
\end{equation*}
where $\ff\colon \U'\to \U$ is a holomorphic mapping, $\U$ is a \nbh of
$x$, and $\tau$ is elementary in
$\U'\subset \C^m$.
By definition the union $\PM=\PM_X=\cup_x\PM_x$ is an open subset (of
the \'etal\'e space) of the sheaf $\Cu=\Cu_X$
of currents, and hence it is a subsheaf which we call the sheaf of {\it pseudomeromorphic}  
currents.  It follows from \cite[Theorem 2.15]{AW3} that this
definition is equivalent to the definition given in \cite{AS,AW3}
\footnote{The definition of pseudomeromorphic currents in \cite{AW2} was slightly more restrictive.}. 
Thus a section $\mu$ of $\PM$ in an open set $\V\subset X$,  $\mu\in\PM(\V)$, 
can be written as a locally finite sum 
\begin{equation}\label{batting}
\mu=\sum (\ff_\ell)_*\tau_\ell,
\end{equation}
where each $\ff_\ell$ is holomorphic and each $\tau_\ell$ is elementary. For simplicity
we will always suppress the subscript $\ell$ in $\ff_\ell$. 

\smallskip
If $X$ is a general 
analytic pure-dimensional space and $\pi\colon Y\to X$ is a smooth modification, then $\PM_X$ consists of all direct images of 
currents in $\PM_Y$.   It follows from \cite[Theorem 2.15]{AW3} 
that the sheaf so obtained is independent of the choice of $Y$. 
Thus we again have a representation \eqref{batting}, where in this case each 
$\ff_\ell$ is a holomorphic mapping into a smooth manifold composed by
a modification.
 
\begin{remark}\label{gryning}
Note that each elementary current $\tau$ is a finite sum of currents
$\tau_\ell$ such that the support of $\tau_\ell$ is contained in an
irreducible component of the elementary support of $\tau$. We may
therefore assume that each $\tau_\ell$ in \eqref{batting} has
irreducible elementary support. 
\end{remark}

From \cite[Corollary 2.16]{AW3} we have
 
\begin{lma}\label{grus1} 
Assume that  $\ff\colon W\to X$ is a holomorphic mapping and $X$ is smooth, or
$\ff$ is a composition of a mapping into a smooth manifold composed by 
a modification.
 If 
$\mu$ is \pmm in $W$ with compact support, then $\ff_*\mu$ is \pmm in $X$.
\end{lma}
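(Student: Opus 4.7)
The plan is to use the structural description \eqref{batting} of pseudomeromorphic currents to reduce to the elementary case and then invoke functoriality of direct images. Since $\supp \mu$ is compact, the locally finite representation $\mu = \sum_\ell (\psi_\ell)_* \tau_\ell$ in \eqref{batting} reduces to a finite sum, where each $\tau_\ell$ is elementary on some $U'_\ell \subset \C^{m_\ell}$ and each $\psi_\ell \colon U'_\ell \to W$ is either holomorphic (when $W$ is smooth) or factors as a holomorphic map $\psi'_\ell \colon U'_\ell \to \tilde W$ into a smooth manifold composed with a modification $\sigma \colon \tilde W \to W$ (when $W$ is singular). By functoriality of direct image,
\begin{equation*}
\varphi_* \mu = \sum_\ell (\varphi \circ \psi_\ell)_* \tau_\ell,
\end{equation*}
so it suffices to show that each composition $\varphi \circ \psi_\ell$ falls within the class of mappings admissible in the definition of $\PM_X$.

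I would verify this by a case analysis on the hypothesis. If $X$ is smooth, then $\varphi \circ \sigma \colon \tilde W \to X$ is a holomorphic map between smooth manifolds, so $\varphi \circ \psi_\ell = (\varphi \circ \sigma) \circ \psi'_\ell \colon U'_\ell \to X$ is a holomorphic map into a smooth target, which is admissible by the definition of $\PM_X$ (if $W$ is already smooth, $\sigma$ is trivial). If instead $\varphi$ factors as $\pi \circ \tilde\varphi$ with $\tilde\varphi \colon W \to Y$ holomorphic into a smooth manifold $Y$ and $\pi \colon Y \to X$ a modification, then $\tilde\varphi \circ \psi_\ell \colon U'_\ell \to Y$ is a holomorphic map into the smooth manifold $Y$, and
\begin{equation*}
\varphi \circ \psi_\ell = \pi \circ (\tilde\varphi \circ \psi_\ell)
\end{equation*}
is precisely the composition of a holomorphic map into a smooth manifold with a modification, again admissible.

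The main obstacle I expect is the bookkeeping required to express each composition of holomorphic maps and modifications between possibly singular spaces in the canonical admissible form. The essential point is that any intermediate modification can be absorbed by passing through a suitable common smooth manifold (e.g.\ via Hironaka desingularization), after which the terminal modification onto $X$ produces the required factorization. Once this is organized, the lemma follows by applying the termwise identification above to the finite sum obtained from the compact support assumption.
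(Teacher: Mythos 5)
The paper offers no internal proof to compare against: Lemma~\ref{grus1} is imported wholesale from \cite[Corollary 2.16]{AW3}, so your blind derivation is necessarily a different route, namely deducing the statement from the paper's own definitions and the representation \eqref{batting}. Your outline is correct in substance: reduce to a finite sum of pushforwards of elementary currents, use $\varphi_*(\psi_\ell)_*\tau_\ell=(\varphi\circ\psi_\ell)_*\tau_\ell$, and check that each composed map is of the admissible type, splitting into the two cases of the hypothesis. Two glossed points deserve mention. First, compactness of $\supp\mu$ does not by itself turn the locally finite sum into a finite one, since the supports of the individual terms need not lie in $\supp\mu$; the standard fix is to multiply by a cutoff $\chi\equiv1$ near $\supp\mu$ and absorb $\psi_\ell^*\chi$ into the smooth factor of $\tau_\ell$ (which stays elementary). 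The same cutoff device is needed to reconcile your "admissible by the definition" step with the germ-level definition of $\PM_X$ for smooth $X$, which literally requires maps into small neighborhoods $\U$ of each point, whereas $\varphi\circ\psi_\ell\colon\U'_\ell\to X$ may have large image; one restricts to $(\varphi\circ\psi_\ell)^{-1}\U$ and cuts off near the preimage of a slightly smaller neighborhood. Second, in the case of singular $X$ your conclusion "again admissible" silently uses that $\PM_X$ may be computed with the particular smooth modification $\pi\colon Y\to X$ furnished by the hypothesis, i.e.\ the independence of the choice of modification; this is exactly \cite[Theorem 2.15]{AW3}, which the paper states, so invoking it is legitimate, but without it (or the common-resolution argument you allude to) that step would be incomplete. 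With these routine repairs your argument is a correct derivation of the cited corollary, essentially reconstructing how it follows from the characterization \eqref{batting}; what the paper's citation buys instead is that all of this bookkeeping, including the nontrivial equivalence of definitions, is delegated to \cite{AW3}.
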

 
Notice that if $\xi$ is a smooth form, then
\begin{equation}\label{kondor}
\xi \w \ff_*\mu=\ff_*(\ff^*\xi\w\mu).
\end{equation}
Applying \eqref{kondor} to the representation \eqref{batting} we see that
 $\PM_X$ is 
closed under exterior multiplication by smooth forms, since this is
true for elementary
currents.
For the same reason 
$\PM_X$ is closed under  
$\dbar$ and $\partial$,  
cf., Remark ~\ref{skymma}.

 Another important property that is inherited from elementary currents,
cf., \eqref{snok2}, 
is the fact that 
\begin{equation}\label{polis}
\bar h \mu=0, \quad d\bar h\w\mu=0
\end{equation}
if $h$ is a
holomorphic function that vanishes on the support of the \pmm
current $\mu$.  This means in particular 
that the action of the current $\mu$ only involves holomorphic derivatives of test
forms.   From \eqref{polis} we get the   
{\it dimension principle:}

\smallskip\noindent 
{\it  If $\mu$ is \pmm of bidegree $(*,p)$ and has support on the
analytic variety $V$, where $\codim V>p$, then  $\mu=0$. }

\smallskip
Given an analytic subvariety $V$ of an open subset $\U\subset X$,
the natural restriction of a \pmm current $\mu$ to $\U\setminus V$
has a canonical extension to a \pmm current $\1_{X\setminus V}\mu$ 
in $\U$.  The following lemma is just Lemma~2.6 in \cite{AW3}:

\begin{lma} \label{3apsko}
Let $V$ be a subvariety of $\U\subset X$, let $h$ be a holomorphic tuple  in $\U$ whose
common zero set is precisely $V$, let $v$ be a smooth and nonvanishing
function, and let $\chi\sim\chi_{[1,\infty)}$.
For each \pmm current $\mu$ in $\U$ we have
\begin{equation*}
\1_{\U\setminus V}= \lim_{\epsilon\to 0} \chi(|h|^2v/\epsilon) \mu.
\end{equation*}
\end{lma}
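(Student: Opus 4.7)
The plan is to reduce to elementary \pmm currents via the representation \eqref{batting} and then compute the limit factor by factor after monomializing $h$.

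First, by \eqref{batting} write $\mu = \sum \varphi_*\tau$ locally finitely with each $\tau$ elementary. The identity \eqref{kondor} gives
$$
\chi(|h|^2 v/\epsilon)\,\varphi_*\tau \;=\; \varphi_*\bigl(\chi(|\varphi^* h|^2\,(\varphi^* v)/\epsilon)\,\tau\bigr),
$$
and the canonical extension $\1_{\U\setminus V}$ likewise commutes with such direct images (this is essentially the defining property of the extension). Hence it suffices to verify the identity when $\mu$ itself is an elementary current $\tau$ on some $\U' \subset \C^m$ with elementary support $T = \{t_{k+1} = \cdots = t_r = 0\}$, with $h$ a holomorphic tuple on $\U'$ cutting out a subvariety $V'$, and with $v$ still smooth and nonvanishing.

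Next, by Hironaka, after composing $\varphi$ with a further modification one may arrange that locally $h = t^A u$ is a monomial times a nonvanishing holomorphic tuple, so that $|h|^2 v = |t^A|^2 \tilde v$ with $\tilde v$ smooth and nonvanishing. In this form, $\chi(|t^A|^2 \tilde v/\epsilon)$ can be analyzed variable by variable against the tensor-product structure of $\tau$. The relevant one-variable facts are: whenever $a_\ell > 0$ the function $\chi(|t_\ell|^{2a_\ell}c/\epsilon)$ vanishes to all orders at $t_\ell = 0$, and hence annihilates $\dbar[1/t_\ell^{m_\ell}]$ identically for every $\epsilon > 0$; while $\lim_{\epsilon\to 0}\chi(|t_\ell|^{2a_\ell}c/\epsilon)[1/t_\ell^{m_\ell}] = [1/t_\ell^{m_\ell}]$ in all cases, by the very definition of the principal value current. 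Assembling the factors, the limit is $0$ precisely when some residue index $\ell \in \{k+1,\ldots,r\}$ has $a_\ell > 0$, which is exactly the case $T \subset V'$; then $\supp\tau \subset V'$ and $\1_{\U'\setminus V'}\tau = 0$. Otherwise the limit equals $\tau$, and this coincides with $\1_{\U'\setminus V'}\tau$ since the principal value factors in $\tau$ already realize the canonical extension across their polar sets.

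The main obstacle is the monomialization step in combination with the elementary structure: one must check that after the modification used to put $h$ in monomial form, the pulled-back current can again be written as a finite sum of elementary currents of the shape required for the per-coordinate computation. This is a careful application of the structural result \cite[Theorem 2.15]{AW3}, together with the compatibility of $\1_{\U\setminus V}$ with direct images, which is needed to transfer the resulting identity back to $\U$. Once these structural points are in place, the remainder is a purely formal one-variable calculation.
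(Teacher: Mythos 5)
There is no in-paper proof to compare with here: the paper simply quotes this statement as Lemma~2.6 of \cite{AW3}, and your outline (reduce to elementary currents via \eqref{batting} and \eqref{kondor}, monomialize $h$ by a resolution, compute factorwise) is indeed the strategy of the cited proof. But as written, your argument has gaps at exactly the two places where the real work lies. First, the monomialization step: composing $\ff$ with a further modification $\pi$ does not let you ``pull back'' the elementary current $\tau$ --- currents have no pullback under $\pi$ --- so you need to \emph{lift} $\tau$, i.e.\ write $\tau=\pi_*\tilde\tau$ with $\tilde\tau$ again a locally finite sum of elementary currents in coordinates adapted to the monomial $h\circ\ff\circ\pi$. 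This is not what \cite[Theorem~2.15]{AW3} says, and it is of essentially the same depth as the lemma you are proving; as it stands this step is a placeholder. Relatedly, invoking the compatibility of $\1_{\U\setminus V}$ with direct images is circular in the logical order of this paper: that property, \eqref{brutus2}, is stated after Lemma~\ref{3apsko} and is obtained from the very machinery (AW2/AW3) that this lemma encapsulates. The cleaner formulation of what must be proved is: for an elementary $\tau$ with elementary support $H$ and an arbitrary holomorphic tuple $g=h\circ\ff$, the limit $\lim_\epsilon\chi(|g|^2w/\epsilon)\tau$ exists and equals $0$ if $H\subset Z(g)$ and $\tau$ otherwise; pushing forward then identifies the limit downstairs without appeal to \eqref{brutus2}.

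Second, even granting $h\circ\ff=t^Au$ locally, the regularizing factor $\chi(|t^A|^2\tilde v/\epsilon)$ is a function of a monomial in \emph{several} variables and does not split as a product of one-variable cutoffs, so your two one-variable facts do not ``assemble'' formally against the tensor-product structure of $\tau$. The residue case is genuinely trivial (the cutoff vanishes identically on a neighborhood of the elementary support when some residue index divides $t^A$), but the claim $\chi(|t^A|^2\tilde v/\epsilon)\tau\to\tau$ when only principal-value and spectator variables occur in $t^A$ is not ``purely formal'': the pairing is an iterated principal value, the factors $[1/t_j^{m_j}]$ with $m_j\ge 2$ are not locally integrable, and after integration by parts the derivatives falling on the cutoff produce terms like $\chi^{(p)}(|t^A|^2\tilde v/\epsilon)\,(|t^A|^2\tilde v/\epsilon)^p/t_j^p$ that must be estimated, uniformly in the other variables, on the shrinking region $|t^A|^2\sim\epsilon$. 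These uniform one-variable-with-parameters estimates (together with an induction over the variables dividing $t^A$) are precisely the analytic core of the proof in \cite{AW2, AW3}; your sketch skips them. So the approach is the right one, but the proof is incomplete at the lifting step and at the several-variable limit computation.
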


Because of the factor $v$, the lemma holds just as well for a holomorphic section
$h$ of a Hermitian vector bundle.

\smallskip

 It follows that 
\begin{equation*}
\1_V\mu:=\mu-\1_{\U\setminus V}\mu
\end{equation*}
has support on $V$. It is proved in
\cite{AW2} that this operation extends to all constructible
sets and that 
\begin{equation}\label{skolgard}
\1_V\1_W\mu=\1_{V\cap W}\mu
\end{equation}
holds. 
If $\alpha$ is a smooth form, then
\begin{equation}\label{brutus1}
\1_V (\alpha\w\mu)=\alpha\w \1_V\mu.
\end{equation}
Moreover, 
if $\ff\colon W\to X$ is a  holomorphic mapping as in Lemma ~\ref{grus1}
and $\mu$ has compact support, then
 \begin{equation}\label{brutus2}
\1_V\ff_*\mu=\ff_*\big(\1_{\ff^{-1}V}\mu\big).
\end{equation}

We will need the following observation, which can proved in the same
way as Lemma~2.8 in \cite{AW3}, using \eqref{brutus2}.

\begin{lma}\label{batong}
If $\mu$ has the form \eqref{batting} then
$$
\1_V\mu=\sum_{\supp\tau_\ell\subset\ff^{-1}V} \ff_*\tau_\ell.
$$
One can just as well take the sum over all
$\ell$ such that
the elementary supports of $\tau_\ell$ are  contained in $\ff^{-1}V$. 
\end{lma}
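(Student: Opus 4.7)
The plan is to apply $\1_V$ termwise to the representation \eqref{batting}. By additivity and \eqref{brutus2},
$$
\1_V\mu=\sum_\ell(\ff_\ell)_*\bigl(\1_{\ff_\ell^{-1}V}\tau_\ell\bigr),
$$
so the problem reduces to analysing $\1_W\tau$ when $\tau$ is an elementary current in some $\U'\subset\C^m$ and $W\subset\U'$ is a subvariety. Using Remark \ref{gryning}, we may also assume that each $\tau_\ell$ has irreducible elementary support $H_\ell$; a nonzero such $\tau_\ell$ then has support containing a nonempty open subset of $H_\ell$, so that $\supp\tau_\ell\subset\ff_\ell^{-1}V$ if and only if $H_\ell\subset\ff_\ell^{-1}V$, which is what makes the two versions of the stated sum agree.

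The key local claim is: for elementary $\tau$ with elementary support $H$, we have $\1_W\tau=\tau$ if $H\subset W$, and $\1_W\tau=0$ if $H\not\subset W$. The first case is immediate from Lemma \ref{3apsko}: picking a holomorphic tuple $h$ cutting out $W$, the factor $\chi(|h|^2v/\epsilon)$ vanishes in a neighbourhood of $W\supset\supp\tau$, so $\chi(|h|^2v/\epsilon)\tau\equiv 0$ for every $\epsilon$ and hence $\1_{\U'\setminus W}\tau=0$.

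The substantive case is $H\not\subset W$, where we must show $\chi(|h|^2v/\epsilon)\tau\to\tau$. Since $h|_H$ is not identically zero, a Hironaka resolution applied to $h$---and arranged so as not to affect the residue directions of $\tau$, exactly as in the proof of Lemma 2.8 in \cite{AW3}---reduces the problem to the case where $h$ is a monomial in variables on which the residue factors of $\tau$ do not depend (note that if $h$ had a factor in a residue variable of $\tau$, then $h|_H$ would vanish, a contradiction). The limit then follows directly from the defining property of the principal value currents in the relevant variables, together with dominated convergence in the remaining ones. I expect this resolution-and-monomialisation step, with the bookkeeping required to guarantee that $\chi(|h|^2v/\epsilon)$ does not disturb the residue factors of $\tau$, to be the main technical obstacle. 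Combining this with the initial reduction gives the lemma in both of its equivalent forms.
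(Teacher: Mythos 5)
Your overall route---apply $\1_V$ termwise via \eqref{brutus2} and then compute $\1_W\tau$ for an elementary current $\tau$ with irreducible elementary support $H$---is exactly the one the paper intends (it gives no proof beyond pointing to \eqref{brutus2} and Lemma~2.8 of \cite{AW3}), and your treatment of the case $H\subset W$ is correct. However, you make the case $H\not\subset W$ far harder than necessary: the statement you set out to prove by a Hironaka/monomialization argument is precisely the SEP of $\tau$ on $H$, which the paper already records in Example~\ref{elex} ($\tau\in\W_{\U'}^{H}$). Concretely, since $\supp\tau\subset H$ one has $\1_H\tau=\tau$, hence $\1_W\tau=\1_W\1_H\tau=\1_{W\cap H}\tau$ by \eqref{skolgard}, and $W\cap H$ is a subvariety of positive codimension in the irreducible $H$ when $H\not\subset W$, so this vanishes by the SEP; the ``main technical obstacle'' you identify is thus already packaged in the quoted facts. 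Separately, your biconditional ``$\supp\tau_\ell\subset\ff^{-1}V$ iff $H_\ell\subset\ff^{-1}V$'', justified by the assertion that a nonzero elementary current has support containing a nonempty open piece of $H_\ell$, is not established by anything in the paper (a priori the support could be a nowhere dense subset of $H_\ell$ that is not a subvariety, so the SEP does not immediately apply) and is not needed: for an index $\ell$ with $\supp\tau_\ell\subset\ff^{-1}V$ but $H_\ell\not\subset\ff^{-1}V$, compute $\1_{\ff^{-1}V}\tau_\ell$ in two ways---it equals $\tau_\ell$ because the support is contained in $\ff^{-1}V$, and it equals $0$ by the SEP argument above---so $\tau_\ell=0$ and the two index sets yield the same sum.
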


For future reference we also include

\begin{lma}\label{1tensor}
If $T\in\PM_X$ and $T'\in\PM_{X'}$, then $T\otimes T'\in\PM_{X\times X'}$.
\end{lma}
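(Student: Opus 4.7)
The plan is to reduce the statement to the elementary case by unfolding the local representation \eqref{batting} of $T$ and $T'$ and observing that the tensor product of two elementary currents is elementary in the joint variables. Concretely, I would write
\[
T = \sum_\ell \ff_* \tau_\ell, \qquad T' = \sum_{\ell'} \ff'_* \tau'_{\ell'},
\]
both sums locally finite, with $\tau_\ell$ elementary in some $\U_\ell \subset \C^{N_\ell}$ and $\tau'_{\ell'}$ elementary in some $\U'_{\ell'} \subset \C^{N'_{\ell'}}$. Since proper pushforward commutes with tensor product (the relevant $\tau_\ell, \tau'_{\ell'}$ have compact support in $\U_\ell, \U'_{\ell'}$), we get
\[
T \otimes T' \;=\; \sum_{\ell, \ell'} (\ff \times \ff')_*\bigl(\tau_\ell \otimes \tau'_{\ell'}\bigr).
\]

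The central step is to verify that $\tau \otimes \tau'$ is again elementary. If $\tau = \alpha \w \prod [1/t_j^{m_j}] \w \prod \dbar[1/t_j^{m_j}]$ in coordinates $t = (t_1,\ldots,t_N)$ on $\U$, and analogously for $\tau'$ in coordinates $s$ on $\U'$, then since each one-variable factor $[1/t_j^{m_j}]$, $\dbar[1/t_j^{m_j}]$, $[1/s_j^{n_j}]$, $\dbar[1/s_j^{n_j}]$ is, by definition, a current in a single variable and elementary currents are tensor products of such one-variable currents against a smooth form with compact support, the current $\tau \otimes \tau'$ is, up to a sign from reordering residue factors past $\alpha'$, precisely the elementary current
\[
\pm (\alpha \w \alpha') \w \prod [1/t_j^{m_j}] \prod [1/s_j^{n_j}] \w \prod \dbar[1/t_j^{m_j}] \w \prod \dbar[1/s_j^{n_j}]
\]
in the joint coordinates $(t,s)$ on $\U \times \U' \subset \C^{N+N'}$; note that $\alpha \w \alpha'$ has compact support since $\alpha$ and $\alpha'$ do.

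Finally, I would check local finiteness of the resulting sum on $X \times X'$ and handle the singular case. Any compact in $X \times X'$ lies in a product $K \times K'$ of compacts, and only finitely many $\ell$ (respectively $\ell'$) meet $K$ (respectively $K'$), so only finitely many pairs $(\ell, \ell')$ contribute. When $X$ or $X'$ is singular, each $\ff \times \ff'$ is a holomorphic map into a product of smooth manifolds composed with $\pi \times \pi'$, where $\pi, \pi'$ are modifications of $X, X'$; since $\pi \times \pi'$ is a modification of $X \times X'$, Lemma \ref{grus1} shows each term $(\ff \times \ff')_*(\tau_\ell \otimes \tau'_{\ell'})$ is pseudomeromorphic on $X \times X'$, and the lemma follows. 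I do not expect a serious obstacle here; the only point requiring attention is the identification of $\tau \otimes \tau'$ as elementary, which is essentially immediate from the tensor-product nature of the definition.
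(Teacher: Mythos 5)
Your argument is correct: reducing to the local representation \eqref{batting}, noting that the tensor product of two elementary currents is (up to sign) elementary in the joint coordinates, using that push-forward commutes with tensor products for compactly supported currents, and handling singular $X$, $X'$ by observing that a product of smooth modifications is a smooth modification of $X\times X'$. Note, however, that the paper itself gives no proof of this lemma but simply cites \cite[Lemma~2.12]{AW3}; your proof is essentially the standard argument one would expect there, so there is no genuinely different route to compare. The only step worth a word of justification is the identity $(\ff\times\ff')_*(\tau\otimes\tau')=\ff_*\tau\otimes\ff'_*\tau'$, which follows since both sides agree on test forms of the form $\xi(x)\w\xi'(x')$ and such forms determine a current on the product.
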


See, e.g., \cite[Lemma~2.12]{AW3}. 
It is easy to verify that
\begin{equation}\label{pelargonia2}
\1_{V\times V'} T\otimes T'=\1_V T\otimes \1_{V'} T'.
\end{equation}

\subsection{The sheaves $\PM_X^Z$ and $\W_X^Z$}\label{regprop2}

Let $X$ be a reduced analytic space, let $Z$ be a (reduced) subspace
of pure dimension, and denote by 
$\PM_X^Z$ the subsheaf of $\PM_X$ of currents that have support on $Z$.
We say that $\mu\in\PM_X^Z$  has the {\it standard extension property}, SEP, on  
$Z$ if $\1_W\mu=0$ for each  
subvariety $W\subset \U\cap Z$ of positive codimension, where $\U$ is any open set in $X$.
Let $\W_X^Z$ be the subsheaf of $\PM_X^Z$ of currents with the SEP on
$Z$.  
If $Z=X$ we usually omit the superscript and just write
$\W$ or $\W_X$.

\begin{ex}\label{elex}
An elementary current in $\U\subset\C^n$ with elementary
support $H$ is in $\W_\U^H$. 
\end{ex}

\subsection{Almost semi-meromorphic currents}\label{asmsec}
The results and definitions in this and the next subsection are taken from \cite[Section 4]{AW3}. 
We say that a current on $X$ is \emph{semi-meromorphic} if it is of the form 
$\omega [1/f]$, where $f$ is a generically nonvanishing holomorphic
section of a line
bundle $L\to X$ and  $\omega$ is a smooth form with values in $L$. 
For simplicity we will often omit the brackets $[\  ]$ indicating principal value. 
Since $\omega [1/f]=[1/f]\omega$ when $\omega$ is smooth we can write just $\omega/f$.
   
\smallskip
Let $X$ be a pure-dimensional analytic space.  Following \cite{AS,
  AW3} we say that a current $a$ is {\it almost
semi-meromorphic} in $X$, $a\in ASM(X)$,  if there is a modification $\pi\colon X'\to X$ such that
\begin{equation}\label{asm}
a=\pi_*(\omega/f),
\end{equation}
where $\omega/f$ is semi-meromorphic in $X'$.
If $\U\subset X$ is an open subset, then the restriction $a_\U$ of $a\in ASM(X)$ to
$\U$ is in $ASM(\U)$.  Moreover, $ASM(X)$ is
contained in $\W(X)$.

Given a modification $\pi \colon X'\to X$, let $\sing(\pi)\subset X'$ be the (analytic) set 
where $\pi$ is not a
biholomorphism.  By the definition it has positive codimension.  Let $Z\subset X'$   be the zero set of $f$. Notice that $a\in ASM (X)$ is smooth outside
$\pi(Z\cup \sing(\pi))$, which has positive codimension in $X$. 
Let $ZSS(a)$, the {\it Zariski-singular support} of $a$, be the smallest 
Zariski-closed set $V\subset X$ such that $a$ is smooth outside $V$.

\begin{ex} If $f$ is a holomorphic function 
in $X$ such that $Z(f)$ has positive codimension, then clearly 
$[1/f]$ is almost semi-meromorphic and $ZSS(a)=Z(f)$.
\end{ex}

\begin{ex} \label{bex}
We claim that $b=\partial|\zeta|^2/2\pi i|\zeta|^2$ is
almost semi-meromorphic in $\C^n$.  In fact, let
$\pi\colon Y\to \C^n$ be the blow-up at the origin. Then, outside the
exceptional divisor,  
$\pi^*b=\omega/s$, where $s$ is a holomorphic section of the line bundle
$L_D$ that defines the exceptional divisor $D$ and $\omega$ is an $L_D$-valued
smooth $(1,0)$-form on $Y$. It is readily verified that
$b=\pi_*(\omega/s)$. In fact, it clearly holds outside the origin, and 
since both sides are locally integrable, the equality holds in the current sense.
Thus $b\in ASM(\C^n)$.
\end{ex}

We now recall one of the main results, Theorem 4.8,  in
\cite{AW3}:

\begin{thm}\label{hittills}
Assume that  $a\in ASM(X)$. For each $\mu\in\PM(X)$ 
there is a unique \pmm  current $T$ in $X$ that coincides with 
$a\w\mu$ in $X\setminus ZSS(a)$ and such that 
$\1_{ZSS(a)} T=0$.
\end{thm}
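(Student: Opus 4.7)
\emph{Uniqueness} is straightforward: if $T_1, T_2 \in \PM(X)$ both satisfy the conclusion, then $S := T_1 - T_2$ is pseudomeromorphic, vanishes on $X \setminus ZSS(a)$, and hence has support contained in $V := ZSS(a)$, so $S = \1_V S = \1_V T_1 - \1_V T_2 = 0$.

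For \emph{existence}, I work locally and fix a holomorphic tuple (or section of a Hermitian vector bundle) $h$ vanishing precisely on $V$. Since $a$ is smooth on $X \setminus V$ and $\chi(|h|^2/\epsilon)$ vanishes in a neighborhood of $V$, the form $\chi(|h|^2/\epsilon)\, a$ is smooth on $X$, and by closure of $\PM_X$ under multiplication by smooth forms,
$$
T_\epsilon := \chi(|h|^2/\epsilon)\, a \wedge \mu \in \PM(X).
$$
The plan is to set $T := \lim_{\epsilon \to 0} T_\epsilon$. Granted the limit exists in $\PM(X)$, both requirements follow easily: outside $V$ the cutoff tends boundedly to $1$, so $T = a \wedge \mu$ there; and for fixed $\delta > 0$ one has $\chi(|h|^2/\delta)\,\chi(|h|^2/\epsilon) = \chi(|h|^2/\delta)$ for all $\epsilon$ small enough, hence $\chi(|h|^2/\delta)\, T_\epsilon = T_\delta$ and so $\chi(|h|^2/\delta)\, T = T_\delta$; letting $\delta \to 0$ and applying Lemma~\ref{3apsko} to $T$ gives $\1_{X \setminus V}\, T = T$, i.e.\ $\1_V T = 0$.

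The substantive point is the convergence $T_\epsilon \to T$ in $\PM(X)$. Writing $a = \pi_*(\omega/f)$ on a modification and $\mu = \sum (\ff_\ell)_* \tau_\ell$ locally, as in \eqref{batting}, the formulas \eqref{kondor} and \eqref{brutus2} reduce the question to the following model: for $\tau$ elementary in $\C^N$, $g$ holomorphic on the same chart, and $\eta$ a smooth form, show that
$$
\lim_{\epsilon \to 0} \frac{\chi(|g|^2/\epsilon)}{g}\, \eta \wedge \tau
$$
exists and is a finite sum of elementary pseudomeromorphic currents. I would then invoke Hironaka to pass to a further modification on which $g$ becomes a monomial in the coordinates already adapted to $\tau$; the limit decouples into one-variable limits of the form $\lim_{\epsilon \to 0} \chi(|t|^{2k}/\epsilon)/t^k = [1/t^k]$, so the product is again elementary. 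Pushing forward via Lemma~\ref{grus1} produces a pseudomeromorphic current, and uniqueness forces $T$ to be independent of the choice of $h$ and of the local representation of $\mu$.

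The \emph{main obstacle} is this geometric reduction: performing a common resolution of $\pi$ and the $\ff_\ell$ so that $f$ becomes monomial relative to coordinates adapted to each $\tau_\ell$, and keeping careful track of the resulting tensor-product structure. Once the setup is in place, the limit is computed factor by factor using the one-variable identities \eqref{utter}--\eqref{snok2}, and everything else amounts to bookkeeping with the formulas of Section~\ref{pmsection}.
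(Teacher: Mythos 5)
Your uniqueness argument is fine and is the standard one. But be aware that the paper does not prove this statement at all: it is quoted from \cite[Theorem 4.8]{AW3}, and the authors explicitly warn that the proof is ``highly nontrivial'' and hinges on \cite[Lemma 4.7]{AW3}, namely that one can choose the representation \eqref{asm} so that $f$ is nonvanishing on $X'\setminus\pi^{-1}ZSS(a)$. That ingredient never enters your argument, and the place where it is needed is exactly where your sketch breaks down.

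The genuine gap is in the existence part, at the step you call the ``main obstacle''. First, the reduction to the model $\lim_\epsilon \chi(|g|^2/\epsilon)(\eta/g)\wedge\tau$ already assumes that the pullback of $a$ under the map $\ff$ in \eqref{batting} is \emph{semi-meromorphic on the chart where $\tau$ lives}, i.e.\ a smooth form divided by a holomorphic function there. An almost semi-meromorphic current is in general only of that form after a further modification, and you cannot perform that modification, because the elementary current $\tau$ (which carries residue factors $\dbar[1/t_j^{m_j}]$) cannot be pulled back under a blow-up; currents do not pull back. The same objection defeats the next step: ``invoke Hironaka so that $g$ becomes a monomial in the coordinates already adapted to $\tau$'' is not a meaningful operation --- monomialization of $g$ takes place on a modification on which $\tau$ no longer exists, while on the original chart a general $g$ is simply not a monomial times a unit in the $t$-coordinates. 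So the two currents $a$ and $\mu$ live ``upstairs'' on different spaces, neither can be transported to the other's space, and resolving this tension is the actual content of the proof in \cite{AW3} (roughly: let the residue factors act first so that only principal-value/ASM data remain on the elementary support, and only then resolve; control the error terms coming from derivatives of $\chi_\epsilon$). Second, your sketch ignores the degenerate case in which the elementary support of some $\tau_\ell$ is mapped by $\ff$ \emph{into} $ZSS(a)$; there the pullback of $a$ does not exist even generically on that support, and one must show that the corresponding contribution to the limit is compatible with $\1_{ZSS(a)}T=0$ --- this is precisely what the special representation of \cite[Lemma 4.7]{AW3} is for. As it stands, then, the proposal establishes uniqueness and the easy formal consequences (e.g.\ that $\1_{ZSS(a)}T=0$ \emph{once} $T\in\PM(X)$ is known, via Lemma~\ref{3apsko}), but the convergence of $T_\epsilon$ to a pseudomeromorphic current --- the heart of the theorem --- is not proved, and the route proposed for it would fail as described.
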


The proof is highly nontrivial and relies on the fact that
one can find a representation 
\eqref{asm} of $a$ such that $f$ is nonvanishing in $X'\setminus
\pi^{-1}ZSS(a)$ (\cite[Lemma 4.7]{AW3}).

Lemma ~\ref{3apsko} implies that
\begin{equation}\label{asm4}
T=\lim_{\epsilon\to 0}\chi(|h|^2v/\epsilon) a\w\mu
\end{equation}
if $h$ is a holomorphic tuple such that  $Z(h)=ZSS(a)$.
We will denote the extension $T$ by  $a\w\mu$ as well.

\smallskip

The definition of $a\wedge \mu$ is local, so that it commutes
with restrictions to open subsets of $X$.

\begin{prop} 
Assume that $a\in ASM(X)$.
If $W$ is an analytic subset of $\U\subset X$ and  $\mu\in\PM(\U)$, then
\begin{equation}\label{pelargonia}
\1_W(a\w\mu)=a\w \1_W\mu.
\end{equation}
\end{prop}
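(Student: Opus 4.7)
My plan is to deduce the identity from the uniqueness statement in Theorem~\ref{hittills}. Indeed, both $\mathbf{1}_W(a\w\mu)$ and $a\w\mathbf{1}_W\mu$ are \pmm currents on $\U$: the first because $\PM$ is preserved by the $\mathbf{1}_W$ operation (Section~\ref{basal}), and the second by Theorem~\ref{hittills} applied to the \pmm current $\mathbf{1}_W\mu$. It therefore suffices to check that each of them (i) coincides with the classical wedge product $a\w\mathbf{1}_W\mu$ on $\U\setminus ZSS(a)$, where $a$ is smooth, and (ii) is killed by $\mathbf{1}_{ZSS(a)}$; then the uniqueness in Theorem~\ref{hittills} forces them to agree.

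Condition (ii) is clear for $a\w\mathbf{1}_W\mu$ by the very definition of the extended product. For $\mathbf{1}_W(a\w\mu)$, I would invoke \eqref{skolgard} together with the property $\mathbf{1}_{ZSS(a)}(a\w\mu)=0$ built into Theorem~\ref{hittills}:
$$
\mathbf{1}_{ZSS(a)}\mathbf{1}_W(a\w\mu)=\mathbf{1}_{W\cap ZSS(a)}(a\w\mu)=\mathbf{1}_W\mathbf{1}_{ZSS(a)}(a\w\mu)=0.
$$

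For condition (i), I would use that restriction to an open subset commutes both with $\mathbf{1}_W$ (evident from Lemma~\ref{3apsko} and the definition $\mathbf{1}_W\mu=\mu-\mathbf{1}_{\U\setminus W}\mu$) and with the product $a\w\cdot$ (whose definition is local, as explicitly remarked after Theorem~\ref{hittills}). On $\U\setminus ZSS(a)$ the current $a$ is smooth, so \eqref{brutus1} gives
$$
\mathbf{1}_W(a\w\mu)\big|_{\U\setminus ZSS(a)}=a\w\mathbf{1}_W\mu\big|_{\U\setminus ZSS(a)},
$$
which is exactly the same classical expression that $a\w\mathbf{1}_W\mu$ reduces to on $\U\setminus ZSS(a)$. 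With (i) and (ii) verified, the uniqueness in Theorem~\ref{hittills} yields \eqref{pelargonia}.

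The only delicate step is (i): one has to be sure that both operations restrict cleanly to the open set $\U\setminus ZSS(a)$ so that the smoothness of $a$ there reduces the question to the standard compatibility \eqref{brutus1}. Once that is in place, the argument is a formal consequence of the characterization of the product given by Theorem~\ref{hittills}.
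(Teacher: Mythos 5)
Your argument is correct: both currents are pseudomeromorphic, they agree with the classical product $a\w\1_W\mu$ on $\U\setminus ZSS(a)$ (by locality of $\1_W$ and of the product together with \eqref{brutus1}, since $a$ is smooth there), and both are killed by $\1_{ZSS(a)}$ (for $\1_W(a\w\mu)$ via \eqref{skolgard} and the defining property of $a\w\mu$), so the uniqueness in Theorem~\ref{hittills} yields \eqref{pelargonia}. This is essentially the intended argument: the paper states the proposition without proof, importing it from \cite[Section~4]{AW3}, where the product is characterized precisely by the two properties you verify.
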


Clearly $\W_X^Z$ is closed under multiplication by smooth forms. We also have

\begin{prop} \label{koko}
Each $a\in ASM(X)$ induces a linear mapping
\begin{equation}\label{rav2}
\W_X^Z \to \W_X^Z, \quad
\mu\mapsto  a\wedge \mu.
\end{equation}
\end{prop}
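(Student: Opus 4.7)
The plan is to establish three things for the map $\mu \mapsto a\wedge \mu$: that it is well-defined and linear into $\PM(X)$, that $a\wedge \mu$ has support on $Z$, and that $a\wedge \mu$ has the SEP on $Z$. Most of the heavy lifting has already been done: the existence of $a\wedge \mu$ as a pseudomeromorphic current follows from Theorem~\ref{hittills}, and linearity in $\mu$ is clear from the defining regularization \eqref{asm4} (the cutoff $\chi(|h|^2 v/\epsilon)$ does not depend on $\mu$). So what remains is to transfer the support condition and the SEP from $\mu$ to $a\w \mu$, and for this the key tool is formula~\eqref{pelargonia}.

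First I would check support. Since $\mu \in \W_X^Z \subset \PM_X^Z$, the current $\mu$ is supported on $Z$, so its natural restriction to $X\setminus Z$ is zero; consequently the canonical extension $\1_{X\setminus Z}\mu$ vanishes and therefore $\1_Z \mu = \mu$. Applying \eqref{pelargonia} with $W=Z$ then gives
\[
a\w\mu \;=\; a\w \1_Z\mu \;=\; \1_Z(a\w\mu),
\]
so $a\w\mu$ is supported on $Z$.

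Next I would verify the SEP. Let $\U \subset X$ be open and $W \subset \U \cap Z$ be a subvariety of positive codimension (in $Z$). By the definition of $\W_X^Z$, we have $\1_W \mu = 0$ in $\U$. Using the locality of $a\w \cdot$ with respect to restrictions and applying \eqref{pelargonia} in $\U$ to the restriction of $\mu$ yields
\[
\1_W (a\w \mu) \;=\; a\w \1_W \mu \;=\; 0.
\]
This shows $a\w \mu$ has the SEP on $Z$, completing the proof that $a\w \mu \in \W_X^Z$.

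There is no real obstacle here; the proposition is essentially a corollary of \eqref{pelargonia}, once one has observed that support on $Z$ is equivalent to $\1_Z \mu = \mu$ for pseudomeromorphic currents. If anything, the only subtlety to articulate cleanly is the locality of the construction $a\w \cdot$ so that \eqref{pelargonia} may be applied in an arbitrary open $\U \subset X$; this is precisely the statement recorded just before the proposition.
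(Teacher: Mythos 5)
Your argument is correct. The paper states Proposition~\ref{koko} without proof (the results of this subsection are quoted from \cite{AW3}), and your route --- observing that support on $Z$ gives $\1_Z\mu=\mu$, then applying \eqref{pelargonia} with $W=Z$ for the support statement and with arbitrary $W\subset\U\cap Z$ of positive codimension (using the locality of $a\w\cdot$) for the SEP --- is exactly the natural argument behind the cited result, so there is nothing to object to.
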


\begin{prop}
Assume that $a_1, a_2\in ASM (X)$ and $\mu\in\PM_X$. Then 
\begin{equation*}
a_1\w a_2\w \mu=(-1)^{\deg a_1 \deg a_2} a_2\w a_1 \w \mu. 
\end{equation*}
\end{prop}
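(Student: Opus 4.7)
The plan is to exploit that outside the Zariski-singular sets both almost semi-meromorphic currents $a_1,a_2$ act as smooth forms, so the identity reduces to ordinary graded commutativity there, and then to propagate it across the singular loci using the uniqueness in Theorem~\ref{hittills}. Set $V_i:=ZSS(a_i)$, $V:=V_1\cup V_2$, and let
$$
T:=a_1\w a_2\w\mu - (-1)^{\deg a_1\deg a_2}\,a_2\w a_1\w\mu.
$$

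First I would check that $T$ vanishes on $X\setminus V$. By Theorem~\ref{hittills} the current $a_2\w\mu$ agrees on $X\setminus V_2$ with the ordinary product of the smooth form $a_2$ and $\mu$; a further wedge with $a_1$, which is smooth on $X\setminus V_1$, is then again an honest product of a smooth form and a current. The analogous statement holds with the indices swapped, so both sides of the identity coincide on $X\setminus V$ with the usual iterated exterior product of two smooth forms and a current, where the standard sign rule applies. Hence $T$ is a \pmm current supported on $V$, so $T=\1_V T$.

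Next I would verify that $\1_V(a_1\w a_2\w\mu)=0$, and symmetrically for the swapped expression. Iterating $\1_W\mu=\mu-\1_{X\setminus W}\mu$ and using \eqref{skolgard} (together with the fact that $\1_W$ extends to constructible sets, as recalled before \eqref{skolgard}) yields the inclusion--exclusion formula
$\1_{V_1\cup V_2}=\1_{V_1}+\1_{V_2}-\1_{V_1\cap V_2}$
on \pmm currents. Apply it to $a_1\w a_2\w\mu$: the first term vanishes by Theorem~\ref{hittills} applied to $a_1$ with the \pmm current $a_2\w\mu$; the second equals $a_1\w\1_{V_2}(a_2\w\mu)$ by \eqref{pelargonia}, and vanishes by Theorem~\ref{hittills} applied to $a_2$ on $\mu$; the third equals $\1_{V_1}\1_{V_2}(a_1\w a_2\w\mu)$ by \eqref{skolgard}, which is zero since the inner $\1_{V_2}$ already gives $0$ by the same computation. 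Running the identical argument with the roles of $a_1$ and $a_2$ interchanged gives $\1_V(a_2\w a_1\w\mu)=0$, hence $\1_V T=0$. Combined with $T=\1_V T$ this forces $T=0$.

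The only mildly subtle point is the inclusion--exclusion identity for $\1$ on a union, but that is a purely formal consequence of the definition of $\1_W$ and \eqref{skolgard}, via $X\setminus(V_1\cup V_2)=(X\setminus V_1)\cap(X\setminus V_2)$. Every remaining step is a direct appeal to Theorem~\ref{hittills} or \eqref{pelargonia}.
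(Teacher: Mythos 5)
Your argument is correct. The paper itself states this proposition without proof, importing it (together with the rest of Section~\ref{asmsec}--\ref{lisa2}) from \cite[Section 4]{AW3}, so there is no in-paper proof to match; what you give is a legitimate self-contained derivation from the material the paper does record. The two pillars both hold: (1) since the definition of $a\w\mu$ commutes with restriction to open sets, on $X\setminus(V_1\cup V_2)$ both iterated products are ordinary products of smooth forms with a current, so the sign rule holds there and $T$ is a \pmm current supported on the analytic set $V=V_1\cup V_2$, whence $T=\1_VT$ by Lemma~\ref{3apsko}; (2) $\1_{V_1}(a_1\w a_2\w\mu)=0$ by the defining property in Theorem~\ref{hittills}, $\1_{V_2}(a_1\w a_2\w\mu)=a_1\w\1_{V_2}(a_2\w\mu)=0$ by \eqref{pelargonia}, and $\1_{V_1\cap V_2}=\1_{V_1}\1_{V_2}$ kills the cross term by \eqref{skolgard}, so the inclusion--exclusion identity (which is indeed a formal consequence of $\1_{X\setminus W}\mu=\mu-\1_W\mu$ and \eqref{skolgard} on constructible sets) gives $\1_VT=0$ and hence $T=0$. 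This is in spirit the same uniqueness-across-the-singular-set mechanism that underlies the construction in \cite{AW3} (where the products are analyzed via representations $a=\pi_*(\omega/f)$ on modifications); your version bypasses any choice of modification and argues purely formally from Theorem~\ref{hittills}, \eqref{pelargonia} and \eqref{skolgard}, at the cost of needing the small observation about $\1$ of a union, which you justify correctly.
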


In particular, one of the $a_j$ may be a smooth form.  It follows that \eqref{rav2} is $\E$-linear.

\begin{ex}\label{skot} 
Assume that $\mu$ is in $\W$. In view of  \eqref{pelargonia},
$\mu'=[1/h]\mu$ is in $\W$ as well.
If $h$ is generically nonvanishing, then 
$h\mu'=h[1/h]\mu=\1_{\{h\neq 0\}}\mu=\mu$. 
\end{ex}

\subsection{Residues of almost semi-meromorphic currents}\label{lisa2}
We shall now study the effect of $\partial$ and $\dbar$ on almost semi-meromorphic currents.

\begin{prop}\label{skrot} 
If $a\in ASM(X)$, then $\partial a\in ASM(X)$ and 
\begin{equation}\label{trots}
\dbar a=b+r,
\end{equation}
where $b=\1_{X\setminus ZSS (a)} \dbar a$ is in $ASM(X)$  and 
$r=\1_{ZSS (a)} \dbar a$ has support on $ZSS(a)$. 
\end{prop}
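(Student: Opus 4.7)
The plan is to work with a suitably refined representation $a=\pi_*(\omega/f)$ and then compute $\partial$ and $\dbar$ explicitly. By \cite[Lemma~4.7]{AW3}, cited in the preceding discussion, we may choose $\pi\colon X'\to X$ so that $f$ is nonvanishing on $X'\setminus\pi^{-1}ZSS(a)$. After composing $\pi$ with a principalization of the ideal $(f)$, we may further assume that, locally on $X'$, $f$ is a monomial $z_1^{m_1}\cdots z_k^{m_k}$, the nonvanishing factor from the principalization having been absorbed into $\omega$; this refinement preserves the property $\{f=0\}\subset\pi^{-1}ZSS(a)$. Thus locally
\begin{equation*}
\omega/f=\omega\,\pfrac{z_1^{m_1}}\cdots\pfrac{z_k^{m_k}}
\end{equation*}
with $\omega$ smooth.

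Since $\partial$ and $\dbar$ commute with direct images, $\partial a=\pi_*\partial(\omega/f)$ and $\dbar a=\pi_*\dbar(\omega/f)$. Applying Leibniz and \eqref{utter}, $\partial(\omega/f)$ expands into a finite sum of semi-meromorphic currents on $X'$, so $\partial a\in ASM(X)$. Similarly Leibniz gives
\begin{equation*}
\dbar(\omega/f)=\frac{\dbar\omega}{f}+(-1)^{\deg\omega}\sum_{j=1}^{k}\omega\w\dbar\pfrac{z_j^{m_j}}\prod_{i\neq j}\pfrac{z_i^{m_i}}.
\end{equation*}
Set $b_0:=\pi_*(\dbar\omega/f)$ and let $r_0$ be the pushforward of the remaining sum, so that $\dbar a=b_0+r_0$. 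Then $b_0\in ASM(X)$, being the pushforward of a semi-meromorphic current. By \eqref{snok1} each summand defining $r_0$ is supported on $\{z_j=0\}\subset\{f=0\}$, so $\supp r_0\subset\pi(\{f=0\})\subset ZSS(a)$ by the Lemma~4.7 choice.

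It remains to identify $b_0=b$ and $r_0=r$. Since $b_0\in ASM(X)\subset\W(X)$ and $ZSS(a)$ has positive codimension, the SEP yields $\1_{ZSS(a)}b_0=0$, while \eqref{skolgard} gives $\1_{ZSS(a)}b=\1_{ZSS(a)}\1_{X\setminus ZSS(a)}\dbar a=0$. Since $r_0$ is supported on $ZSS(a)$, both $b_0$ and $b$ agree with $\dbar a$ outside $ZSS(a)$; combined with the vanishing of $\1_{ZSS(a)}$ on each, this forces $b_0=b$ and hence $r_0=r$. The only substantial technicality is the local monomialization of $f$, which is a standard principalization; the rest is Leibniz plus the indicator and SEP calculus from Sections~\ref{basal} and \ref{regprop2}.
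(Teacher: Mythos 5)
Your argument is correct. Note that the paper itself gives no proof of Proposition \ref{skrot}: it is imported from \cite[Section~4]{AW3}, and your route --- the refined representation from \cite[Lemma~4.7]{AW3}, a further modification making $f$ locally a monomial, Leibniz together with \eqref{utter} and \eqref{snok1}, and then the SEP/indicator calculus to identify $b_0=\1_{X\setminus ZSS(a)}\dbar a$ and $r_0=\1_{ZSS(a)}\dbar a$ --- is precisely the standard argument behind the cited result. The only step you leave tacit is that $\1_{X\setminus ZSS(a)}r_0=0$ because $\supp r_0\subset ZSS(a)$; this is immediate from Lemma \ref{3apsko}, since for fixed $\epsilon$ the function $\chi(|h|^2v/\epsilon)$ vanishes in a neighborhood of $Z(h)=ZSS(a)$, so your identification $b_0=b$, $r_0=r$ goes through.
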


Clearly the decomposition \eqref{trots} is unique. We call  $r=r(a)$ the
\emph{residue (current)} of $a$.

Notice that current $\dbar (1/f)$ is the residue of the
principal value current $1/f$.  
Similarly, the residue currents introduced, e.g., in \cite{PTY, A1, AW1} can be
considered as residues of certain almost semi-meromorphic currents,
generalizing $1/f$, cf.\ \cite[Example~4.18]{AW3}.

\smallskip 

As a consequence of Theorem ~\ref{hittills} we can define products 
of $\dbar$, and residues, of almost semi-meromorphic currents and pseudomeromorphic currents.  
\begin{df}\label{bounce}
For $a\in \ASM (X)$ and $\mu\in \PM_X$ we define 
\begin{equation}\label{konc}
\dbar a\w\mu:=\dbar(a\w\mu)-(-1)^{\deg a} a\w\dbar\mu,
\end{equation} 
where $a\w\mu$ and $a\w\dbar\mu$ are defined as in
Theorem ~\ref{hittills}. 
Moreover we define
\begin{equation*}
r(a)\wedge \mu:= \1_{ZSS(a)}\dbar a\wedge \mu. 
\end{equation*}
\end{df}
Thus $\dbar a\w\mu$ is defined so that  the Leibniz rule holds. 
It is easily checked that
\begin{equation}\label{dagis} 
r(a)\wedge\mu=\lim_{\epsilon\to 0}\dbar \chi(|h|^2v/\epsilon)a\w\mu, 
\end{equation} 
if $Z(h)=ZSS(a)$. 
%
%
In particular this gives a way of defining products of $\dbar$ and
residues of almost
semi-meromorphic currents. For example, \eqref{apan2} 
can be defined by inductively
applying \eqref{konc} and Theorem ~\ref{hittills}, cf.\ \cite{LS}.

\begin{ex}\label{bex2} 
Let $b$ be the almost semi-meromorphic current from
Example ~\ref{bex}.  If $n=1$, then $\dbar b$ is the current of integration $[0]$ at
the origin.  If $n>1$, then $\dbar b$ 
is almost semi-meromorphic since
then $r(b)$ must vanish in view of the dimension principle.  For 
$k\le n$ we can form the products
$B_k:= b\w (\dbar b)^{k-1}$.  It is just a product of
almost semi-meromorphic currents since no residues appear because of the dimension principle. However, it is well-known that
$\dbar B_n=[0]$. This is in fact a compact way of writing the Bochner-Martinelli
formula, see, e.g., \cite{Aint1}.
 \end{ex}

\section{Regularity of \pmm currents} \label{regprop}

We shall now discuss regularity properties of \pmm currents. To this end we first
have to consider local images of analytic sets under 
holomorphic mappings that are not necessarily proper.
Recall that if  $\ff:Y\to X$ is a  holomorphic mapping of between manifolds
and $Y$ is connected, then generically $\ff$ attains  its optimal
rank, $\rank \ff$, i.e.,
$\rank_y \ff=\rank \ff$ for all $y$ outside an analytic variety of positive codmension.

\begin{df}\label{cqadef}
Let $X$ be a complex manifold. We say that a compact set $V\subset X$ is a
\emph{\cqa\ } (compact quasianalytic set) if there is a (not
necessarily connected) complex manifold $Y$, a holomorphic map $\ff:Y\to
X$,  and a compact set $K\subset Y$, such that
$V=\ff(K)$. 
We say that the \emph{dimension} of $V$, $\dim V$, is $\leq d$ if $\rank_y \ff\leq d$ for all $y\in
K$. 
\end{df}

If $\dim V\leq d$, then the \emph{codimension} of $V$ is $\geq \dim
X-d$. 
%
%
If $d$ is as in Definition ~\eqref{cqadef} and $K$ has nonempty interior then we say that $\dim
V=d$. 

\begin{remark}
Our definition of a \cqa\ is closely related to the theory of subanalytic sets in the real setting, see, e.g.,
\cite{BM}. However we have not been able to rely directly on this theory. 
\end{remark}

\begin{ex}
Clearly, any compact set $K\subset X$ is a \cqa; however the 
dimension according to Definition~\ref{cqadef} might not be the expected. For example, in view of 
Example~\ref{nolldim} below, a point set with a
limit point cannot be a \cqa\  of dimension $0$. 
\end{ex}

Since we do not require $Y$ to be connected, any  finite union 
of \cqa s of dimension $\leq d$ is a \cqa\ of dimension $\leq d$.

\begin{remark}\label{inkl}
If $\ff:Y\to X$ is a holomorphic map of rank $n$, $X$ is a submanifold
of $M$, and $i:X\to M$ is the
inclusion, then $\rank i\circ \ff=n$. Thus if $V\subset X$ is a \cqa\ of dimension
$\leq n$, then so is $i(V)\subset M$.  
\end{remark}

\begin{remark}\label{vari}
We may allow $Y$ to be singular in Definition ~\ref{cqadef}. 
Indeed, assume that $V=\ff(K)$, where $\ff:Y\to X$ is a holomorphic map of
optimal rank $d$ and $Y$ is an analytic
variety. Let $\pi: \widetilde Y\to Y$
be a desingularization of $Y$. Then $\widetilde K:= \pi^{-1}(K)\subset\widetilde
Y$ is compact  and $\tilde \ff:= \ff\circ \pi:\widetilde Y\to X$ is a holomorphic map
of optimal rank $d$, and thus $V=\tilde \ff(\widetilde K)$ is a \cqa\ of
dimension $\leq d$ 
acccording to Definition ~\ref{cqadef}. 
\end{remark}

The notion of \cqa\  generalizes the notion of (a compact part of) a 
variety. 

\begin{ex}\label{vanlig}
Assume that $Z\subset X$ is a subvariety of pure dimension $\ell$. Then
$i:Z\to X$ has optimal rank $\ell$ and thus  any
compact $K\subset Z$ is
a \cqa\ of dimension $\leq \ell$. If $K$ has non-empty interior, then
$\dim K=\ell$. 
\end{ex}

There exists a \cqa\  that is not contained in an analytic 
variety of the same dimension.
The following example, which is a complex variant of an example
due to Osgood, see, e.g., \cite[Ex.\ 2.4]{BM}, was  
pointed out to us by Jean-Pierre Demailly. 

\begin{ex}\label{jpdex}
Let $u_1, u_2, u_3:\C\to \C$ be entire functions that are 
algebraically independent, e.g., let $u_i(z)=e^{a_iz}$, where
$a_1,a_2,a_3$ are linearly independent over $\Q$. 
Moreover let $\ff:\C^2\to \C^3$ be the map 
\[
\ff(z,w)=\big (u_1(z)w, u_2(z)w, u_3(z)w\big )
\]
and let $V=\ff(\overline\V)$, where $\V$ is a relatively compact \nbh of $0\in\C^2$.
Then  $V\subset \C^3$ is a \cqa\ of dimension $2$ since $\rank \ff=2$. 
We claim that $V$ is not contained in any $2$-dimensional subvariety
of an open set in $\C^3$ 
that contains $V$.
To prove this assume, to the contrary, that there is a holomorphic
function $g\not\equiv 0$ in a \nbh of $V$ such that $V\subset \{g=0\}$. 
Then $g(0)=0$. Let  
\[
g(x)=\sum_{m\in\N^3} a_{m}~x_1^{m_1} x_2^{m_2} x_3^{m_3}
\]
be the Taylor expansion of $g$ at $0\in\C^3$. Since $g\not\equiv 0$,
there is at least one index $m$ such that $a_{m}\neq 0$. 
Let $d$ denote the sum $m_1+m_2+m_3$ for this $m$. 
The assumption $V=\ff(\overline\V)\subset \{g=0\}$ implies that 
\[
0=g\circ \ff(z,w)=\sum_{m\in\N^3} a_{m}~u_1(z)^{m_1} u_2(z)^{m_2}
u_3(z)^{m_3} w^{m_1+m_2+m_3}
\]
for $(z,w)\in\V$. 
Identifying the coefficient of $w^d$ we get 
\[
\sum_{m_1+m_2+m_3=d}
a_{m}~u_1(z)^{m_1} u_2(z)^{m_2}
u_3(z)^{m_3} = 0,
\]
which contradicts the algebraic
independence of  $u_1$, $u_2$, and $u_3$ and thus proves the claim. 
\end{ex}

However, in a sense, a \cqa\ of dimension $\leq d$ is generically
contained in an analytic variety of dimension $d$.

\begin{lma}\label{caput2}
Assume that $V\subset X$ is a \cqa\  of dimension
$\leq d$. 
Then there is a \cqa\  $V'\subset V$ of dimension $\le d-1$,
such for each $x\in V\setminus V'$ there is a neighborhood $\U\subset
X$ of $x$ and a finite union $W\subset \U$ of submanifolds of
dimension $\leq d$ such
that $V\cap\U \subset W$. 
\end{lma}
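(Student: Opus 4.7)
The plan is to use the rank stratification of $\ff$: peel off the locus where $\ff$ has rank $<d$ and form $V'$ from its image, then on the complement apply the constant rank theorem combined with a compactness argument on $K$ to produce the required finite union of submanifolds.

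Write $V=\ff(K)$ as in Definition~\ref{cqadef} and let $Y^{<d}:=\{y\in Y:\rank_y\ff<d\}$, which is an analytic subvariety of $Y$. Set $V':=\ff(K\cap Y^{<d})$. To see that $V'$ is a \cqa\ of dimension $\leq d-1$, I would take a desingularization $\pi:\widetilde Y\to Y^{<d}$ as in Remark~\ref{vari} and view it as a holomorphic map into the smooth manifold $Y$ via the inclusion $Y^{<d}\hookrightarrow Y$. Then $\widetilde K:=\pi^{-1}(K\cap Y^{<d})$ is compact in $\widetilde Y$, and at each $\tilde y\in\widetilde Y$ the chain rule gives $\rank_{\tilde y}(\ff\circ\pi)\leq\rank_{\pi(\tilde y)}\ff<d$, so $V'=(\ff\circ\pi)(\widetilde K)$ is a \cqa\ of dimension $\leq d-1$.

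Now fix $x\in V\setminus V'$. By construction every $y\in K\cap\ff^{-1}(x)$ satisfies $\rank_y\ff=d$; combined with the global bound $\rank\ff\leq d$ and the lower semicontinuity of rank, $\ff$ has rank identically $d$ on a neighborhood of each such $y$. Hence the constant rank theorem produces open neighborhoods $U_y\subset Y$ of $y$ and $O_y\subset X$ of $x$ such that $W_y:=\ff(U_y)$ is a $d$-dimensional submanifold of $O_y$. Since $K\cap\ff^{-1}(x)$ is compact, it is covered by finitely many $U_{y_1},\ldots,U_{y_N}$. The leftover $C:=K\setminus\bigcup_i U_{y_i}$ is compact and $\ff(C)$ is a compact subset of $X$ not containing $x$, so we may choose a neighborhood $\U\subset\bigcap_i O_{y_i}$ of $x$ with $\U\cap\ff(C)=\emptyset$. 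Any $x'\in V\cap\U$ has a preimage in $K$ which cannot lie in $C$ and hence lies in some $U_{y_i}$, so $x'\in W_{y_i}\cap\U$. Therefore $V\cap\U\subset\bigcup_i(W_{y_i}\cap\U)$, the desired finite union of submanifolds of dimension $\leq d$.

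The main obstacle I anticipate is the first step, namely verifying that $V'$ has the correct dimension bound. The subvariety $Y^{<d}$ is in general singular and not pure-dimensional, so it cannot be fed directly into Definition~\ref{cqadef}; one has to pass through a desingularization via Remark~\ref{vari} and be careful that the rank inequality for the composed map persists over the singular locus of $Y^{<d}$. Once $V'$ is in hand, the analysis near $x\in V\setminus V'$ is the routine combination of the constant rank theorem with a compactness cut-off on $K$.
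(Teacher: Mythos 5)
Your proposal is correct and follows essentially the same route as the paper: stratify by the rank of $\ff$, take $V'$ to be the image of $K$ intersected with the locus $\{\rank_y\ff\le d-1\}$ (handled via desingularization as in Remark~\ref{vari}), and at a point of $V\setminus V'$ combine the constant rank theorem with a compactness argument on $K\cap\ff^{-1}(x)$ and a cut-off neighborhood avoiding the image of the leftover compact set. Your handling of the leftover set $C$ is just a slightly more explicit version of the paper's step choosing $\U$ so that $\overline{\ff^{-1}\U\cap K}\subset\cup\V_{y_j}$.
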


If $d=0$, then $V'$ should be interpreted as the empty set; more
generally, a \cqa\ of dimension $\leq -1$ equals the empty set. 

\begin{proof}
Let  $V=\ff(K)$, where $\ff:Y\to X$ is a holomorphic map of
generic rank $\leq d$, $Y$ is a complex manifold, and $K\subset Y$ is
compact. 
Let $Y'=\{y\in Y, \rank_y \ff\le d-1\}$. Then $Y'$ is a subvariety of
$Y$,  and it follows,  cf., Remark ~\ref{vari}, that $V':=\ff(Y'\cap K)$ is a \cqa\ of dimension $\le d-1$.

If $V'=V$ the lemma is trivial. Otherwise,  
take  $x\in V\setminus V'$ and let $Z=\ff^{-1}(x)\cap K$.
If $y\in Z$ then $y\notin Y'$, and
since $Y'$ is closed there is a neighborhood $\V_y\subset Y$ of $y$ such
that $\ff$ has constant rank $d$ in $\V_y$. After possibly shrinking $\V_y$,
we may assume, in view of the constant rank theorem, that 
$\ff(\V_y)$ is a submanifold of dimension $d$ of some \nbh
$\U_y$ of $x$ in $X$.  By compactness, $Z$ is contained in a finite union
$\cup \V_{y_j}$ of such sets. Let $\U_{y_j}$ be the associated
neighborhoods of $x$. 

Since $K$ is compact and $\ff$ is continuous
there is
a neighborhood $\U\subset \cap \U_{y_j}$ of $x$ such that the closure of
$\ff^{-1}\U\cap K$ is contained in a finite union
$\cup \V_{y_j}$ of such sets $V_y$.  It follows that 
$V\cap\U$ is contained in 
$
W=\ff\big (\cup \V_{y_j}\big )\cap \U.
$
\end{proof}

\begin{ex}\label{nolldim}
It follows from Lemma ~\ref{caput2} that a \cqa\ of dimension $0$ is a
compact part of a variety of dimension $0$ and thus 
a discrete point set. 
\end{ex}

\begin{remark}
If $V=\ff(K)$, where $\ff:Y\to X$ has constant rank $d$, then $V'$ is empty in the
proof above, and thus $V$ is  contained in a subvariety of
$X$ of dimension $d$.
\end{remark}

\begin{ex}\label{jpdex2}
Let $\ff$ 
be as in Example ~\ref{jpdex}, with the choice $u_i(z)=e^{a_iz}$. 
Then 
\[
\frac{\partial \ff_i}{\partial z}=a_i e^{a_i z} w, ~~~~~
\frac{\partial \ff_i}{\partial w} = e^{a_i z}
\]
so it follows that $\rank_{(z,w)}=1$ if $w=0$ and $\rank_{(z,w)}=2$
otherwise. 
Thus, the set $Y'$ in the proof of Lemma ~\ref{caput2} equals $\{w=0\}$
and $V'=\ff(Y')=\{0\}$. Therefore the quasi-analytic set $V=\ff(\overline\V)$ is ``locally analytic'' outside $0$. 
\end{ex}

We have the following version of the dimension principle.

\begin{prop}\label{dimprin}
(i)\  If a \pmm current $\mu$ of bidegree $(*,p)$ has its support contained in 
a \cqa\ of codimension $\ge  p+1$, then $\mu =0$.
\smallskip

\noindent (ii)  \  If $\mu\in\W_X$ has support on a \cqa\ of positive codimension, then
$\mu=0$. 
\end{prop}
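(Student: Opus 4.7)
The plan is to induct on the dimension $d$ of a \cqa\ containing $\supp\mu$, using Lemma~\ref{caput2} in each step to reduce to a situation where either the analytic dimension principle of Section~\ref{basal} or the SEP applies directly.

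For part (i), I would induct on $d = \dim V$, assuming $V$ is a \cqa\ of dimension $\leq d$ with $\codim V \geq p+1$, so $d \leq n - p - 1$. The base case $d = -1$ gives $V = \emptyset$, which is trivial. For the inductive step, Lemma~\ref{caput2} supplies a \cqa\ $V' \subset V$ of dimension $\leq d - 1$ such that every $x \in V \setminus V'$ has a neighborhood $\U \subset X$ in which $V \cap \U$ is contained in a finite union $W$ of submanifolds of dimension $\leq d$. Such a $W$ is an analytic subset of $\U$ of codimension $\geq n - d \geq p + 1 > p$, and $\supp\mu \cap \U \subset W$, so the analytic dimension principle forces $\mu = 0$ on $\U$. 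Since $x$ was arbitrary, $\supp\mu \subset V'$, a \cqa\ of dimension $\leq d - 1$, and the inductive hypothesis closes the argument.

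For part (ii) the structure is parallel, with the SEP replacing the bidegree restriction. Induct again on $d = \dim V$, now under the sole assumption $d \leq n - 1$. In the inductive step, for $x \in V \setminus V'$ choose the neighborhood $\U$ and subvariety $W \subset \U$ of positive codimension supplied by Lemma~\ref{caput2}, and split $\mu|_\U = \1_W \mu + \1_{\U \setminus W}\mu$. The second term vanishes because $\supp\mu \cap \U \subset W$, while the first vanishes by the SEP since $W$ is a subvariety of positive codimension in $\U$. Hence $\mu = 0$ on $\U$, so $\supp\mu \subset V'$, and since $\mu$ still belongs to $\W_X$ the inductive hypothesis finishes the proof.

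The main obstacle is in fact already packaged in Lemma~\ref{caput2}: all the delicate work of ``locally analyticizing'' a \cqa\ outside a thinner \cqa\ lives there. Once that lemma is in hand, the proposition becomes a clean induction whose only residual content is the observation that the exceptional sets $V'$ produced are themselves \cqa s of one lower dimension, which is precisely what the lemma guarantees.
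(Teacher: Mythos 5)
Your argument is correct and is essentially the paper's own proof: combine Lemma~\ref{caput2} with the usual dimension principle for part (i), respectively the SEP for part (ii), to push the support of $\mu$ into a \cqa\ of one dimension lower, and conclude by a finite descent/induction. The paper states this more tersely, but the decomposition and the key lemma are the same as yours.
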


\begin{proof}
Assume that the support of $\mu$ is contained in the \cqa\ $V$ of codimension $\ge p+1$.
In view of Lemma ~\ref{caput2} and the usual dimension principle, see 
Section ~\ref{basal}, then $\mu$ must
have its support contained in a \cqa\ $V'$ of codimension $\ge p+2$.
Repeating the 
argument, (i) follows by a finite induction.  The statement (ii) is verified in a similar way.
 \end{proof}

\begin{ex}
Let us use the notation in Example ~\ref{jpdex}. 
Let $\chi$ be a cutoff function in $\C^2$ that is $1$ in a neighborhood of $0$ and 
$0$ outside $\V$ and let $\mu:=\ff_*\chi$. Then 
$$
\mu.1=\int_{\C^2} \chi \neq 0
$$
so $\mu$ is a \pmm nonvanishing current with compact support in the
\cqa\ $V$ in Example ~\ref{jpdex}. 
It follows from Proposition ~\ref{dimprin} ~(ii) that $\mu$ is not in
$\W_X$. 
However, note that $\1_{W}\mu=0$ for all germs of proper subvarieties $W$ at
$0\in\C^3$. In fact, $\1_{W}\mu=\ff_*(\1_{\ff^{-1}W}\chi)=0$ by the dimension principle, since
$\ff^{-1}W$ has positive codimension in $Y$ in view of Example ~\ref{jpdex}.
\end{ex}

We are now ready for our main result of this section.

\begin{thm}\label{tom}
Let $\mu$ be a pseudomeromorphic current with compact support on a manifold $X$
of dimension $n$. 
Then there is a \cqa\ $V\subset X$ of dimension $\le n-1$
such that $\mu$ is smooth in $X\setminus V$. 
\end{thm}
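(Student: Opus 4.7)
The plan is to decompose $\mu$ into elementary pieces via \eqref{batting}, identify a candidate cqa in the target as the image under the structure map of certain analytic loci in the domain, and prove smoothness outside by local fiber integration.

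First, using \eqref{batting} and the compactness of $\supp\mu$, I would reduce to the case $\mu=\ff_*\tau$ for a single elementary current $\tau$ with compact support in $\U'\subset\C^m$; by Remark~\ref{gryning} the elementary support may be assumed irreducible, that is, a coordinate subspace $H=\{t_{k+1}=\cdots=t_r=0\}$. Since finite unions of cqa's of dimension $\le n-1$ are again such, this reduction is valid. If the generic rank of $\ff|_H$ is less than $n$, then $V:=\ff(\supp\tau)$ is itself a cqa of dimension $<n$ and $\mu$ vanishes outside it, so the theorem holds trivially. Otherwise I would put $D_0:=H$ and $D_j:=H\cap\{t_j=0\}$ for $j=1,\ldots,k$, and form the analytic subvarieties
\[
Y_j:=\{y\in D_j\cap\supp\tau:\rank_y(\ff|_{D_j})<n\},\qquad V_j:=\ff(Y_j).
\]
Each $D_j$ is smooth, $Y_j$ is compact, and $\rank(\ff|_{D_j})\le n-1$ on $Y_j$; hence by Definition~\ref{cqadef} each $V_j$, and therefore also $V:=\bigcup_{j=0}^k V_j$, is a cqa of dimension $\le n-1$.

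The heart of the argument is to prove that $\mu$ is smooth on $X\setminus V$. Given $x\in X\setminus V$, the compact set $F:=\ff|_H^{-1}(x)\cap\supp\tau$ meets no $Y_j$, so at every $y\in F$ the map $\ff|_H$ is a submersion, and for each $j\le k$ with $y\in D_j$ the hypersurface $D_j$ is transverse to the fiber of $\ff|_H$ at $y$. Since the $D_j$ are pairwise transverse coordinate hyperplanes of $H$, an iterated application of the constant rank and implicit function theorems would provide local coordinates $(t'_1,\ldots,t'_n,s_1,\ldots,s_{m'})$ on a neighborhood $U_y\subset H$ of $y$ in which $\ff|_H$ is the projection $(t',s)\mapsto t'$ and each $D_j$ through $y$ is the coordinate hyperplane $\{s_{i_j}=0\}$; I would then extend these to coordinates on a neighborhood in $\C^m$ by adjoining the transverse coordinates $t_{k+1},\ldots,t_r$. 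Covering $F$ by finitely many such $U_{y_i}$ and using a partition of unity, the residue factors of $\tau$ collapse to $H$ via \eqref{snok1}, and $\mu.\xi=\tau.\ff^*\xi$ becomes, patch by patch, an iterated one-variable principal value integration in the fiber coordinates $s_{i_j}$ against a smooth density constructed from $\alpha$ and holomorphic derivatives of $\xi$. The classical fact that one-variable principal value integration yields smooth dependence on parameters then gives $\mu.\xi=\int\xi\cdot g\,dt'\w d\bar{t}'$ with $g$ smooth near $x$.

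The main obstacle I expect is this last step: one must \emph{simultaneously} straighten all $D_j$ through $y$ in coordinates adapted to $\ff|_H$ — which requires careful use of both the pairwise transversality of the $D_j$ and the fiberwise transversality coming from the submersion hypothesis — and then verify that iterated principal value integration in the fiber directions of a smooth density truly yields smoothness, rather than merely local integrability, of the resulting function of $t'$.
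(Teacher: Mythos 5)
There is a genuine gap, and it is exactly at the step you flag as the main obstacle. Your exceptional set only records where $\ff$ drops rank on $H$ and on the single intersections $D_j=H\cap\{t_j=0\}$, but the normal form you need at a point $y$ where \emph{several} principal value coordinates vanish simultaneously (namely: $\ff|_H$ a projection and each $D_j$ through $y$ a \emph{fiber} coordinate hyperplane $\{s_{i_j}=0\}$) forces $\ff$ restricted to the deep stratum $H\cap\{t_{j_1}=\cdots=t_{j_\ell}=0\}$ to be a submersion near $y$. Pairwise transversality of the $D_j$ inside $H$ together with submersivity of each single $\ff|_{D_j}$ does \emph{not} imply this, so the simultaneous straightening fails and, worse, the statement with your $V$ is false. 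Concretely, take $N=2$, $n=1$, $\ff(t_1,t_2)=t_1-t_2$ and $\tau=\chi\,[1/t_1][1/t_2]\,dt_1\w d\bar t_1\w dt_2\w d\bar t_2$ with $\chi$ a cutoff, $\chi(0)\neq0$. Here $H=\C^2$ and $\ff$, $\ff|_{\{t_1=0\}}$, $\ff|_{\{t_2=0\}}$ have rank $1$ everywhere, so your $V$ is empty; but with $x=t_1-t_2$, $s=t_2$ one gets $\ff_*\tau=g(x)\,dx\w d\bar x$ with $g(x)=\frac1x\big(F(x)-G(x)\big)$, where $F(x)=\int_s\chi(x+s,s)/s$ and $G(x)=\int_w\chi(w,w-x)/w$ are smooth, $F(0)=G(0)$, and by \eqref{snok1} $\partial_{\bar x}(F-G)(0)=c\,\chi(0,0)\neq0$. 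Hence $g(x)=c\,\chi(0,0)\,\bar x/x+o(1)$ is not even continuous at $0$: the image of the deep stratum $\{t_1=t_2=0\}$, where $\rank\ff|_{E_{\{1,2\}}}=0<n$, must be put into $V$, and your construction misses it.

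The paper's proof differs from yours precisely here: it defines $E_I'=\{y:\ \rank_y\ff|_{E_I}<n\}$ for \emph{every} multi-index $I$ (all coordinate-plane intersections, not only the codimension-one-in-$H$ ones), and takes $V=\ff\big(\cup_I E_I'\cap K\big)$, a \cqa\ of dimension $\le n-1$. Then at a point $y$ outside $E'=\cup_I E_I'$, with $I_y=\{i:\ t_i(y)=0\}$, the full rank of $\ff|_{E_{I_y}}$ gives $d\ff_1\w\cdots\w d\ff_n\w dt_{i_1}\w\cdots\w dt_{i_k}\neq0$ on $E_{I_y}$, so $(\ff_1,\ldots,\ff_n,t_{i_1},\ldots,t_{i_k},\sigma)$ are coordinates and $\ff$ becomes a projection whose base variables are disjoint from all the singular variables $t_{i_j}$. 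In that situation your second worry evaporates: the fiber integral is the action of a tensor product of one-variable principal value and residue currents in variables that do not involve the base, so it is trivially smooth in the base parameters; in your weaker setup the base variable can enter the kernel (as $1/((x+s)s)$ above), which is exactly what destroys smoothness. The fix is to enlarge your family $\{D_j\}$ to all intersections $H\cap\{t_{j_1}=\cdots=t_{j_\ell}=0\}$, $\{j_1,\ldots,j_\ell\}\subset\{1,\ldots,k\}$, after which your outline essentially becomes the paper's proof.
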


\begin{proof}
Note that the case $n=0$ is trivial. 

We may assume that $\mu=\ff_*\tau$, where $\ff:\U\to X$ is a holomorphic map, $\U\subset\C^N$ is open,  
and $\tau$ is an elementary current of the form \eqref{1elem}
with compact support 
$K\subset \mathcal U$. 
For each multi-index $I=\{i_1,\ldots, i_k\}\subset \{1,\ldots, N\}$,
let 
\[
E_I=\{t_{i_1}=\cdots = t_{i_k}=0\}=E_{i_1}\cap\cdots\cap E_{i_k}, 
\]
where $E_i=\{t_i=0\}$. Moreover, let 
\[
E_I'=\{y\in \U;\  \rank_y \ff|_{E_I}<n\},
\]
where $\ff|_{E_I}$ denotes the restriction of $\ff$ to $E_I$. 
Notice that  $E_\emptyset'=\{y\in \U, \rank_y \ff <n\}$.
Let $E'=\cup_I E_I'$ and let 
$V=\ff(E'\cap K)$. Then $V$ is a \cqa\ in view of Remark ~\ref{vari} and
$\dim V=\rank \ff|_{E'}\le n-1$.

We claim that the restriction to $X\setminus V$ of $\mu$ is smooth.
Let $\chi$ be any smooth cutoff function with support
in $X\setminus V$. We have to prove that  
$\chi\mu$ is smooth.
To this end,  consider $y\in
\ff^{-1}(\supp \chi)\cap K$. Let $I_y=\{i, y\in E_i\}$, i.e., $I_y$ is the
maximal $I$, under inclusion, such that $y\in E_I$. Then there is a
neighborhood $\V_y$ such that $\V_y\cap E_i=\emptyset$ for all
$i\notin I_y$. If $I_y=\{i_1,\ldots, i_k\}$, it follows,  possibly after reordering the
variables,  that $\tau$ is of the form 
\begin{equation*}
\tau=\beta\w\Big[\frac{1}{t_{i_1}^{m_{i_1}}}\Big]\cdots\Big[\frac{1}{t_{i_\ell}^{m_{i_\ell}}}\Big]\dbar\Big[\frac{1}{t_{i_{\ell+1}}^{m_{i_{\ell+1}}}}\Big]\w\ldots
\w \dbar\Big[\frac{1}{t_{i_k^{m_{i_k}}}}\Big],
\end{equation*}
where $\beta$ is smooth in $\V_y$. 

Since $y\notin E'$, possibly after shrinking $\V_y$ we can assume that $\V_y\cap
E'=\emptyset$, which, in particular, implies that $\ff|_{E_{I_y}}$ has rank $n$ in
$E_{I_y}\cap\V_y$. It follows that 
\[ 
d\ff_1\wedge \cdots \wedge d\ff_n \wedge 
dt_{i_1}\wedge \cdots \wedge dt_{i_k}\neq 0
\]
in $E_{I_y}\cap\V_y$ if $\ff=(\ff_1,\ldots, \ff_n)$. 
By the inverse function theorem,  
after possibly shrinking $\V_y$ further, we can thus choose a coordinate system in $\V_y$ so that 
$\ff_1,
\ldots, \ff_n, t_{i_1},\ldots, t_{i_k}$ are the first $n+k$
coordinates. Let $\sigma_1,\ldots, \sigma_{N-n-k}$ be a choice of complementary
coordinate functions.  Then 
\[
\ff: (\ff_1,\ldots, \ff_n, t_{i_1}\ldots t_{i_k}, \sigma_1, \ldots,
\sigma_{N-n-k})\mapsto 
(\ff_1,\ldots, \ff_n),
\]
i.e., $\ff$ is just the projection onto the first $n$ coordinates. 

Let $\chi_y$ be a smooth cutoff function that is $1$ in a
neighborhood of $y$ and has compact support in $\V_y$. Then 
\[
\ff_*(\chi_y \tau) =
\int_{t_i, \sigma_j} 
\chi_y\beta\w\Big[\frac{1}{t_{i_1}^{m_{i_1}}}\Big]\cdots\Big[\frac{1}{t_{i_\ell}^{m_{i_\ell}}}\Big]\dbar\Big[\frac{1}{t_{i_{\ell+1}}^{m_{i_{\ell+1}}}}\Big]\w\ldots
\w \dbar\Big[\frac{1}{t_{i_k^{m_{i_k}}}}\Big], 
\]
which is smooth. 

Since $\ff^{-1}(\supp \chi)\cap K$ is compact, there are finitely many $y$ and
$\V_y$ as above, such that $\cup \V_y$ is a neighborhood of
$\ff^{-1}(\supp \chi)\cap K$. It follows that there is a finite number of
smooth cutoff functions
$\chi_y$ with compact support in $\V_y$ such that $\{\chi_y\}$ is a
partition of unity on $\ff^{-1}(\supp \chi) \cap K$. 
Thus 
\[
\chi\mu = \ff_*(\ff^*\chi \tau) = \sum \ff_* (\chi_y \ff^* \chi \tau)
\]
is smooth, since each term in the rightmost expression is.
\end{proof}

From Theorem ~\ref{tom} and Proposition ~\ref{dimprin} (ii) we get

\begin{cor}
If $\mu\in\W$  vanishes where it is smooth, then $\mu$
vanishes identically.
\end{cor}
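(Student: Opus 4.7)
The plan is to localize and then invoke both Theorem~\ref{tom} and Proposition~\ref{dimprin}~(ii). Fix $x_0\in X$; the conclusion is local, so it suffices to show that $\mu$ vanishes in a neighborhood of $x_0$. Pick a smooth cutoff $\chi$ of compact support with $\chi\equiv 1$ on a neighborhood $\Omega$ of $x_0$. Since $\W_X$ is closed under multiplication by smooth forms, $\chi\mu\in\W(X)$, and it has compact support, so Theorem~\ref{tom} furnishes a cqa $V_0\subset X$ of dimension $\le n-1$ such that $\chi\mu$ is smooth on $X\setminus V_0$.

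The crux is to upgrade ``smooth'' to ``identically zero'' on $X\setminus V_0$. On the open set $\{\chi\neq 0\}\setminus V_0$, smoothness of $\chi\mu$ together with the nonvanishing of $\chi$ implies that $\mu=(\chi\mu)/\chi$ is smooth there; by hypothesis $\mu=0$ on this set, so $\chi\mu=0$ as well. On $\text{int}\{\chi=0\}$, $\chi\mu$ is trivially zero. The union of these two open sets has complement contained in $V_0\cup\partial\{\chi=0\}$, and the topological boundary $\partial\{\chi=0\}$ has empty interior for any smooth $\chi$. Therefore the smooth form $\chi\mu$ on $X\setminus V_0$ vanishes on an open dense subset, and by continuity it vanishes identically on $X\setminus V_0$.

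Thus $\supp(\chi\mu)\subset V_0$, a cqa of positive codimension, and Proposition~\ref{dimprin}~(ii) applied to $\chi\mu\in\W(X)$ gives $\chi\mu=0$. In particular $\mu|_\Omega=\chi\mu|_\Omega=0$, and since $x_0$ was arbitrary, $\mu\equiv 0$. The main obstacle is the continuity step in the second paragraph, which bridges the gap between ``$\mu=0$ where $\mu$ is smooth'' and ``$\chi\mu=0$ throughout the smooth locus $X\setminus V_0$''; once this is in place, the corollary reduces at once to the two cited results.
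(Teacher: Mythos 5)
Your argument is correct and follows the same route the paper intends: cut off to get compact support, apply Theorem~\ref{tom} to conclude smoothness outside a cqa of positive codimension, use the hypothesis to see the support lies in that cqa, and finish with Proposition~\ref{dimprin}~(ii). The density/continuity step you add is just a careful bookkeeping consequence of working with $\chi\mu$ rather than $\mu$ itself (one could equally restrict attention to the open set where $\chi\equiv 1$), so there is no essential difference from the paper's proof.
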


\section{Regularity properties of  currents in $\PM_X^Z$ and $\W_X^Z$}\label{taknock}
 

Our first result is a local description of $\PM_X^Z$ when $Z$
is smooth. 
 
\begin{prop}\label{struktur}
Let $\mu$ be a pseudomeromorphic current on a manifold $X$. 
Assume that $\mu$ has support on a submanifold $Z\subset X$ of
codimension $p$. If we choose local coordinates $z_1\ldots, z_{n-p},
w_1\ldots, w_{p}$ in $\U\subset\subset X$ so that $Z=\{w_1=\cdots= w_p=0\}$, 
then, in $\U$,  $\mu$ has a unique finite expansion
\begin{equation}\label{grasvart}
\mu=\sum_r\sum'_{|I|=r}\sum_{m\in\N^p} \mu_{I,m}(z)\otimes
\dbar\frac{1}{w_p^{m_p+1}}\wedge\cdots\wedge\dbar\frac{1}{w_1^{m_1+1}}\wedge
dw_{I_1}\wedge\cdots\wedge dw_{I_r},
\end{equation}
where $\mu_{I,m}$ are pseudomeromorphic currents on $Z$.

\smallskip
\noindent Moreover, $\dbar\mu=0$ if and only if $\dbar\mu_{I,m}=0$ for each $I,m$, and
$\mu$ is in $\W^Z_X$ if and only if $\mu_{I,m}$ is in $\W_Z$ for each $I,m$.
\end{prop}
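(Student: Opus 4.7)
The plan is to exploit the annihilation relations $\bar w_j\mu = 0$ and $d\bar w_j\w\mu=0$ from \eqref{polis}, together with finite order of $\mu$ along $Z$, to reduce the action of $\mu$ on a test form to a pairing with finitely many Taylor coefficients in $w$ of its holomorphic-in-$w$ part.

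First I would show that $\mu$ only ``sees'' a restricted piece of a test form $\xi$. By \eqref{polis}, $\mu(\bar w_j\eta) = 0$ and $\mu(\eta \w d\bar w_j)=0$ for every test $\eta$, so expanding $\xi$ in the $\bar w$-variables we may replace $\xi$ by the part $\tilde \xi$ obtained by setting $\bar w=0$ in the coefficients and discarding all monomials containing a $d\bar w_j$. Since $\mu$ is \pmm and supported on $Z$, we may locally write $\mu = \sum (\ff_\ell)_*\tau_\ell$ as a finite sum of pushforwards of elementary currents, and by inspection of \eqref{1elem} each summand has bounded order of differentiation in $w$ at $Z$. Hence there is a finite $M$ such that $\mu(\xi)$ depends only on the jet $\{\partial_w^m \tilde \xi|_{w=0}\}_{|m|\le M}$, which has holomorphic coefficients in $w$ expanded as $\sum_{|m|\le M}\frac{w^m}{m!}\partial_w^m\tilde\xi|_{w=0}$.

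Next I would use the explicit pairing coming from \eqref{snok1}: for any test form $\phi(z,\bar z)$ on $Z$ and multi-indices $I\subset\{1,\dots,p\}$, $m\in\N^p$,
\[
\Bigl(\dbar\tfrac{1}{w_p^{m_p+1}}\w\cdots\w\dbar\tfrac{1}{w_1^{m_1+1}}\w dw_I\Bigr)
\]
pairs (up to an explicit constant and Koszul sign) with $\phi\otimes w^{m'}dw_{I'^c}$ to $\delta_{I,I'}\delta_{m,m'}$ times $\int_Z \phi$. Using this duality I would define
\[
\mu_{I,m}(\phi) := \frac{(-1)^{\sigma(I,m)}}{(2\pi i)^p\, m!}\ \mu\bigl(\phi(z,\bar z)\otimes w^m\, dw_{I^c}\bigr),
\]
which yields both existence and uniqueness of the expansion \eqref{grasvart}. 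The main obstacle is then verifying that each $\mu_{I,m}$ is \pmm on $Z$ (not merely a distribution). For this I would go back to the decomposition $\mu=\sum (\ff_\ell)_*\tau_\ell$ and observe that the dual pairing above commutes with pushforward: each $\mu_{I,m}$ becomes a finite sum of pushforwards, by $\ff_\ell$ composed with the projection $\U\to Z$, of elementary currents on $\ff_\ell^{-1}(\U)$, and is therefore \pmm by definition.

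For the closing two assertions I would argue as follows. Applying $\dbar$ to \eqref{grasvart}, and using that $\dbar$ anti-commutes past the residue factors and the $dw_{I_j}$, gives an expansion of $\dbar \mu$ of the same shape with coefficients $\dbar\mu_{I,m}(z)$; the uniqueness already established then forces $\dbar\mu=0 \iff \dbar\mu_{I,m}=0$ for all $I,m$. For the SEP characterization, take any subvariety $W\subset\U\cap Z$ of positive codimension (in $Z$) and apply $\1_W$ to \eqref{grasvart}; by \eqref{brutus1} together with the product structure in the $(z,w)$-decomposition (cf.\ \eqref{pelargonia2}), $\1_W$ commutes with wedging by the $w$-factors, so the coefficients of the expansion of $\1_W\mu$ are exactly $\1_W\mu_{I,m}$. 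By uniqueness, $\1_W\mu=0$ for every such $W$ iff each $\1_W\mu_{I,m}=0$, i.e.\ $\mu\in\W_X^Z$ iff every $\mu_{I,m}\in\W_Z$.
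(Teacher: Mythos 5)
Your proposal is correct and follows essentially the same route as the paper: both reduce the action of $\mu$ to finitely many $w$-Taylor coefficients of test forms via \eqref{polis} and the finite order of $\mu$, recover the coefficients by the dual pairing (which the paper writes as $\pi_*(w^m\mu)=(2\pi i)^p\mu_m$, cf.\ \eqref{potatis}--\eqref{stensota}), and deduce the $\dbar$- and SEP-statements from uniqueness and the tensor-product structure (Lemma~\ref{snokus}). The only cosmetic difference is that the paper first reduces to the single component $I=(1,\dots,p)$ by citing \cite[Theorem~3.5]{AW3}, whereas you treat all $I$ simultaneously via the pairing against $\phi\otimes w^m\,dw_{I^c}$.
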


Notice that the right hand side of \eqref{grasvart} indeed defines a current 
$\mu$ in $\PM_X^Z$  if
$\mu_{I,m}$ are in $\PM_Z$ in view of Lemma~\ref{1tensor}.

For the proof we need the following simple lemma.

\begin{lma}\label{snokus} Let $X$ and $Y$ be analytic spaces and let $p$ be a point in $Y$.

\noindent (i) \  If $\pi\colon X\times  Y\to X$ is the natural projection and $\mu\in \W_{X\times Y}^{X\times\{p\}}$,
then $\pi_*\mu\in \W_X$.

\smallskip
\noindent(ii) \ 
If $\mu$ is in $\W_X$ and $\nu\in\PM_Y$ has support at $p$, 
then $\mu\otimes\nu$ is in $\W_{X\times Y}^{X\times\{p\}}$.
\end{lma}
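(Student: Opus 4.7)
For part (i), the plan is first to verify that $\pi_*\mu$ makes sense as a pseudomeromorphic current on $X$, and then to check the SEP. Since $\supp\mu\subset X\times\{p\}$, the restriction of $\pi$ to the support is a homeomorphism, hence proper on any suitable compact set. Locally, for a relatively compact open $U\subset X$, one multiplies $\mu$ by a smooth cutoff $\rho$ that equals $1$ near $\overline{U}\times\{p\}$ and has compact support in $X\times Y$; then $\rho\mu$ is compactly supported, so $\pi_*(\rho\mu)\in\PM_X$ by Lemma~\ref{grus1}, and it agrees with $\pi_*\mu$ on $U$. Hence $\pi_*\mu\in\PM_X$. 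For the SEP, let $W\subset\U\subset X$ be a subvariety of positive codimension. By \eqref{brutus2} and \eqref{skolgard}, and using $\supp\mu\subset X\times\{p\}$,
\[
\1_W\pi_*\mu \;=\; \pi_*\bigl(\1_{W\times Y}\mu\bigr) \;=\; \pi_*\bigl(\1_{W\times\{p\}}\mu\bigr).
\]
Since $W\times\{p\}$ has positive codimension in $X\times\{p\}$, the right-hand side vanishes by the SEP of $\mu$ on $X\times\{p\}$.

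For part (ii), Lemma~\ref{1tensor} gives $\mu\otimes\nu\in\PM_{X\times Y}$, and clearly $\supp(\mu\otimes\nu)\subset X\times\{p\}$ since $\supp\nu\subset\{p\}$. To verify the SEP, observe that any subvariety of positive codimension in an open subset of $X\times\{p\}$ is of the form $W_1\times\{p\}$ for a subvariety $W_1$ of positive codimension in an open subset of $X$. Applying \eqref{pelargonia2},
\[
\1_{W_1\times\{p\}}(\mu\otimes\nu) \;=\; \1_{W_1}\mu\otimes\1_{\{p\}}\nu \;=\; 0,
\]
because $\1_{W_1}\mu=0$ by the SEP of $\mu$ on $X$, while $\1_{\{p\}}\nu=\nu$ since $\supp\nu\subset\{p\}$ forces the canonical extension $\1_{Y\setminus\{p\}}\nu$ to vanish.

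The main obstacle here is simply the non-properness of $\pi$ in (i); this is resolved by the cutoff argument sketched above, which exploits the fact that $\supp\mu$ lies entirely in the closed set $X\times\{p\}$ where $\pi$ is effectively a homeomorphism. Once that well-definedness hurdle is cleared, both parts reduce to a direct application of the standard identities \eqref{brutus2}, \eqref{skolgard}, and \eqref{pelargonia2}, together with the definition of SEP.
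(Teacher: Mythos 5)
Your proof is correct and follows essentially the same route as the paper: part (i) via $\1_W\pi_*\mu=\pi_*(\1_{\pi^{-1}W}\mu)$ together with \eqref{skolgard} and the SEP on $X\times\{p\}$, and part (ii) via \eqref{pelargonia2}. The only difference is your added cutoff argument justifying that the non-proper pushforward $\pi_*\mu$ is well defined and pseudomeromorphic, a point the paper leaves implicit.
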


\begin{proof} 
Let $W$ be a subvarity of $\U\subset X$ of positive codimension. If $\mu$ has support and the SEP on $X\times\{p\}$,
then 
\[\1_{\pi^{-1}W}\mu=\1_{W\times
  Y}\1_{X\times{\{p\}}}\mu=\1_{W\times\{p\}}\mu =0,\]
cf.\ \eqref{skolgard}. 
Thus 
$\1_W \pi_*\mu=\pi_*(\1_{\pi^{-1}W}\mu)=0$,  and so part (i) follows. 
Part (ii) follows from \eqref{pelargonia2}.  %
In fact, assume that the hypothesis is fulfilled. If $W\subset \U\cap X$ has positive codimension,
then $\1_{W\times\{p\}}\mu\otimes\nu=\1_W\mu\times \1_{\{p\}}\nu=0$,
since $\1_W\mu=0$. 
\end{proof}

\begin{proof}[Proof of Proposition~\ref{struktur}]
In view of \cite[Theorem~3.5]{AW3}  it suffices to consider the case
where the terms in \eqref{grasvart} vanishes except for $r=p$,
i.e., $I=(1,\ldots, p)$. Therefore, let us assume from now on that
this is the case. 
Let $\mu_m=\mu_{I,m}$,
$dw=dw_1\wedge\cdots\wedge dw_p$, 
$w^m=w_1^{m_1}\cdots w_p^{m_p}$,
and 
$$
\dbar\frac{1}{w^{m+\1}}=\dbar\frac{1}{w_p^{m_p+1}}\wedge\cdots\wedge\dbar\frac{1}{w_1^{m_1+1}}.
$$

It is readily checked that if $\phi_\ell(z)$ are test forms on
$Z\cap\U$, then 
\begin{equation}\label{potatis}
\int_{z,w}\big (\phi_\ell(z)\otimes w^\ell\big )\w \big
(\mu_m(z)\otimes\dbar\frac{1}{w^{m+\1}}\w dw\big )=\delta_{\ell,m}(2\pi i)^p\int_z \phi_\ell(z)\w \mu_m(z),
\end{equation}
where $\delta_{\ell,m}$ is the Kronecker symbol. 
Let $\pi:\C^n\to \C^{n-p}$ be the projection 
$$
(z_1,\ldots,
z_{n-p},w_1,\ldots, w_{p})\mapsto (z_1,\ldots, z_{n-p}).
$$ 
As a consequence of \eqref{potatis} we have that if $\mu$ has a
representation \eqref{grasvart}, then 
\begin{equation}\label{stensota} 
\pi_*(w^m\mu)=(2\pi i)^p\mu_m.
\end{equation}
Thus the representation \eqref{grasvart} of $\mu$ is unique if it exists.

Now assume that $\mu$ is given, and let
\[
T=\frac{1}{(2\pi i)^p}\sum_{m\in\N^p}
\mu_m(z)\otimes 
\dbar\frac{1}{w^{m+\1}}\wedge
dw,
\]
where $\mu_m$ are defined by \eqref{stensota}. Since $\mu$ has locally finite order this sum is finite and thus defines an element in $\PM_X^Z$. 
We claim that 
\begin{equation}\label{saga}
\mu=T. 
\end{equation}
To prove \eqref{saga}, first notice that for each $j$, $dw_j\w \mu
=dw_j \w T=0$  for degree reasons and  $d\bar w_j\w \mu
=d\bar w_j \w T=0$ by\eqref{polis}, so we only have to check the equality for test forms $\phi$ with no differentials with respect to $w$. A Taylor expansion
with respect to $w$ of such a form $\phi$ gives that
$$
\phi=\sum_{|\ell|<M} \phi_\ell(z)\otimes w^\ell + \Ok(\bar w) + \Ok(|w|^M),
$$
where $\Ok(\bar w) $ denotes terms with some factor $\bar w_j$
and $M$ is chosen so large that $\Ok(|w|^M)\mu=\Ok(|w|^M)T=0$ in $\U$.
Since $\bar w_j \mu=\bar w_j T=0$, cf.,  \eqref{polis}, 
it follows that we just have to check
\eqref{saga} for test forms like
$\phi=\phi_\ell(z) \otimes w^\ell$.
However, it follows immediately from \eqref{potatis} and \eqref{stensota}
that $\mu.\phi=T.\phi$ for such $\phi$, which proves \eqref{saga} and
the first part of the proposition.

Since $\dbar(1/w^{m+\1})\w dw$ is $\dbar$-closed it follows 
by the uniqueness that $\dbar\mu_{m}=0$ for all $m$ 
if (and only if) $\dbar\mu=0$.  
The last statement follows from Lemma~\ref{snokus} $(ii)$.
\end{proof}

This gives us the following extension of Theorem ~\ref{tom}. 

\begin{cor}\label{nusa}
Assume that $\mu$ is a pseudomeromorphic current in $X$ with compact
support in $\U\cap Z$, where $\U$ and $Z$ are as in 
Propostion~\ref{struktur}. Then there is a cqa $V\subset Z\cap \U$ 
of codimension $\ge p+1$ in $\U$ and such that 
\[
\mu=\alpha\wedge\tilde\mu,  
\]
in $\U\setminus V$, where $\alpha$ is a smooth form in $X\setminus V$
and $\tilde\mu$ is a pseudomeromorphic current of bidegree $(0,p)$
with compact support in $Z\cap \U$. 
\end{cor}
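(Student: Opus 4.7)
The plan is to apply Proposition~\ref{struktur} to expand $\mu$ in the given coordinates, then use Theorem~\ref{tom} on each coefficient, and finally collect the terms into a single product. First, write
\[
\mu=\sum_{I,m}\mu_{I,m}(z)\otimes\dbar\frac{1}{w^{m+\1}}\w dw_I
\]
as in Proposition~\ref{struktur}; the sum is finite, and since $\mu_{I,m}=(2\pi i)^{-p}\pi_*(w^m\mu)$ by \eqref{stensota} (with $\pi:(z,w)\mapsto z$), each $\mu_{I,m}\in\PM_{Z\cap\U}$ has compact support in $Z\cap\U$. Applying Theorem~\ref{tom} to each $\mu_{I,m}$ on $Z$ produces a cqa $V_{I,m}\subset Z\cap\U$ of dimension $\leq n-p-1$ outside which $\mu_{I,m}$ is smooth. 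Their finite union $V=\cup_{I,m}V_{I,m}$ is a cqa and, by Remark~\ref{inkl}, has codimension $\geq p+1$ in $\U$, and every $\mu_{I,m}$ is simultaneously smooth on $Z\setminus V$.

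Second, combine the finite sum into a single wedge product by forcing a common exponent. Choose $M\in\N^p$ so large that $M_j\geq m_j$ for every multi-index $m$ that actually occurs. Iterating \eqref{utter} yields
\[
w^{M-m}\dbar[1/w^{M+\1}]=\dbar[1/w^{m+\1}],
\]
so, using $\bar w_j\w\dbar[1/w^{M+\1}]=d\bar w_j\w\dbar[1/w^{M+\1}]=0$ (cf.\ \eqref{snok2}) to collapse $\pi^*\mu_{I,m}$ to $\mu_{I,m}(z)$ at $w=0$, one obtains
\[
(\pi^*\mu_{I,m})\,w^{M-m}\,dw_I\w\dbar[1/w^{M+\1}]=\mu_{I,m}(z)\otimes\dbar[1/w^{m+\1}]\w dw_I
\]
wherever $\mu_{I,m}$ is smooth, in particular on $\U\setminus V$.

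Third, pick a smooth cutoff $\rho$ with $\supp\rho\subset\subset\U$ and $\rho\equiv 1$ on a neighborhood of $\supp\mu$, and set
\[
\tilde\mu:=\rho\,\dbar[1/w^{M+\1}],\qquad
\alpha:=\rho\sum_{I,m}(\pi^*\mu_{I,m})\,w^{M-m}\,dw_I.
\]
Then $\tilde\mu\in\PM_X$ is of bidegree $(0,p)$ with compact support in $Z\cap\U$, while $\alpha$ vanishes off the compact set $\supp\rho\subset\U$ and is smooth on $\U\setminus V$, hence smooth on all of $X\setminus V$. By the identity displayed in the previous step, $\alpha\w\tilde\mu=\rho^2\mu$ on $\U\setminus V$, and since $\rho\equiv 1$ on $\supp\mu$ this equals $\mu$.

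The main (mild) obstacle is the second step: rewriting finitely many summands, each carrying its own residue $\dbar[1/w^{m+\1}]$, as the wedge of one smooth form with one residue current. The remedy is to pass to a uniform large exponent $M$ and absorb the compensating monomial $w^{M-m}$ into the smooth factor, which is made legitimate by the one-variable calculus~\eqref{utter}.
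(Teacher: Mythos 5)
Your construction follows the paper's own proof essentially verbatim: expand $\mu$ as in Proposition~\ref{struktur}, apply Theorem~\ref{tom} to the coefficients $\mu_{I,m}$, pass to a common exponent $M$, and absorb the compensating monomials $w^{M-m}$ (and the differentials $dw_I$) into the smooth factor; keeping general $I$ rather than reducing to $r=p$ is harmless and in fact matches the bidegree $(0,p)$ requirement on $\tilde\mu$. The one step that fails as written is the final smoothness claim for $\alpha$. The pullback $\pi^*\mu_{I,m}$ is smooth at $(z,w)$ exactly when $\mu_{I,m}$ is smooth at $z$, so your $\alpha=\rho\sum(\pi^*\mu_{I,m})\,w^{M-m}\,dw_I$ is smooth only on $\U\setminus (V\times\C^p_w)$, not on $\U\setminus V$: at a point $(z_0,w_0)$ with $(z_0,0)\in V\cap\supp\mu$ and $w_0\neq 0$ small one has $\rho\equiv 1$, some $\mu_{I,m}$ need not be smooth at $z_0$, and $w^{M-m}$ may even be $\equiv 1$ (when $m=M$), so nothing saves the smoothness there. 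Since the corollary asserts smoothness on all of $X\setminus V$ with $V\subset Z$ of codimension $\ge p+1$ --- and this is precisely what is used later via Lemma~\ref{surpuppa} --- this is a genuine (if small) gap, not just loose wording.

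The repair is the extra step the paper performs in its last sentence: because $\tilde\mu$ has support in $Z$, the product $\alpha\wedge\tilde\mu$ only depends on $\alpha$ near $Z$, and the non-smooth locus $(V\times\C^p_w)\setminus V$ and the set $(Z\cap\supp\tilde\mu)\setminus V$ are disjoint closed subsets of $X\setminus V$ (a common point would have $w=0$ and $z$ over $V$, hence lie in $V$). Hence one may replace $\alpha$ by $\theta\alpha$, with $\theta$ smooth on $X\setminus V$, $\theta\equiv 1$ near $(Z\cap\supp\tilde\mu)\setminus V$ and $\theta\equiv 0$ near $(V\times\C^p_w)\setminus V$; this form is smooth on $X\setminus V$ and still satisfies $\mu=\alpha\wedge\tilde\mu$ on $\U\setminus V$, since near points of $\U\setminus V$ off $Z$ both sides vanish. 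A cosmetic point in the same direction: \eqref{stensota} as displayed covers only $I=(1,\dots,p)$; for general $I$ you should either invoke the analogous pairing or simply quote Proposition~\ref{struktur} for the fact that the $\mu_{I,m}$ are finitely many pseudomeromorphic currents on $Z$, with compact support in $Z\cap\U$ because $\mu$ has compact support there. With these amendments your argument coincides with the paper's.
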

 
\begin{proof}
Note that the case $\dim X=0$ is trivial. 

Consider the representation \eqref{grasvart} of $\mu$. As in the proof of
Proposition~\ref{struktur} it suffices to consider terms in the
representation \eqref{grasvart} where $r=p$; let us use the
notation from that proof. 
Choose $M\in N^p$ such that $M_j\geq m_j$ for all  $j$ in
\eqref{grasvart}. Let 
\[
\tilde\mu= \dbar\frac{1}{w^{M+1}}\w dw
\]
and let 
\[
\alpha=
\sum_{m\in\N^p} 
w^{M-m} \mu_{m}(z).
\]
Then clearly $\mu=\alpha\wedge\tilde\mu$ in $\U$.

Since $\mu$ has compact support in $\U\cap Z$, each $\mu_{m}$ has compact
support in $\U\cap Z$ and thus by Theorem ~\ref{tom} there are \cqa s $V_{m}\subset
Z$ of strictly positive codimension, such that $\mu_{m}$ is smooth
outside $V_{m}$. %
Now %
$\alpha$ is smooth in $\U\setminus V\times \C^p_w$, where  
$V:=\cup V_{m}$ is a \cqa\ of codimension $\ge p+1$ in $X$.
Multiplying $\tilde\mu$ by a suitable cutoff function in $\U$ and
replacing $\alpha$ by a smooth form on $X\setminus V$ that coincides
with $\alpha$ on the support of $\mu$, 
 we get the desired
representation of $\mu$ in $\U\setminus V$.  
\end{proof}

The main result in this section is the following local characterization of elements in $\W_X^Z$
in terms of elementary currents.

\begin{thm}\label{tacon} 
Assume that $\mu$ is a \pmm current on $X$ with support on the subvariety $Z$ of
dimension $d$. Then
$\mu\in\W_X^Z$ if and only if there is a locally finite representation
\begin{equation}\label{planta}
\mu=\sum_\ell \ff_* \tau_\ell,
\end{equation}
where $\ff$ is a holomorphic mapping, such that, for each $\ell$, the elementary 
support of $\tau_\ell$ is contained in $\ff^{-1}Z$,
and
the restriction $\tilde\ff_\ell$ of $\ff$ to  the elementary support of  $\tau_\ell$
has generic rank $d$.
\end{thm}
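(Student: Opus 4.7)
The proof splits into the easy direction $(\Leftarrow)$ and the harder direction $(\Rightarrow)$. For $(\Leftarrow)$, since $\ff(H_\ell)\subset Z$, each $\ff_*\tau_\ell$ is supported in $Z$, and hence so is $\mu$. To check the SEP, fix a subvariety $W\subset\U\cap Z$ of positive codimension in $Z$. By Lemma~\ref{batong}, together with $\supp\tau_\ell\subset H_\ell$, we get $\1_W\ff_*\tau_\ell=\ff_*(\1_{\tilde\ff_\ell^{-1}(W)}\tau_\ell)$. Taking each $H_\ell$ irreducible (Remark~\ref{gryning}), the dominance of $\tilde\ff_\ell:H_\ell\to Z$ (generic rank $d=\dim Z$) forces $\tilde\ff_\ell^{-1}(W)$ to be a proper subvariety of $H_\ell$; by the SEP of elementary currents on their elementary supports (Example~\ref{elex}), $\1_{\tilde\ff_\ell^{-1}(W)}\tau_\ell=0$. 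Summing gives $\1_W\mu=0$, so $\mu\in\W^Z_X$.

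For $(\Rightarrow)$, start from any locally finite representation $\mu=\sum_\ell\ff_*\tau_\ell$ with irreducible elementary supports $H_\ell$. Since $\mu=\1_Z\mu$, Lemma~\ref{batong} lets us drop all indices with $H_\ell\not\subset\ff^{-1}Z$, so we may assume $\ff(H_\ell)\subset Z$ for every $\ell$; in particular the generic rank $r_\ell$ of $\tilde\ff_\ell$ satisfies $r_\ell\le d$. Partition the indices into a \emph{good} set ($r_\ell=d$) and a \emph{bad} set ($r_\ell<d$), and write $\mu=\mu_{\mathrm{good}}+\mu_{\mathrm{bad}}$. The direction $(\Leftarrow)$ applied to $\mu_{\mathrm{good}}$ yields $\mu_{\mathrm{good}}\in\W^Z_X$, hence $\mu_{\mathrm{bad}}=\mu-\mu_{\mathrm{good}}\in\W^Z_X$ as well. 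Since $\supp\mu_{\mathrm{bad}}$ lies, locally, in a finite union of images $\ff(H_\ell)$ with $r_\ell<d$---each of which, by Remark~\ref{vari}, is a \cqa\ of dimension $r_\ell<d$---it suffices to prove the following strengthening of Proposition~\ref{dimprin}~(ii): \emph{if $\nu\in\W^Z_X$ has support in a \cqa\ $V$ with $\dim V<d$, then $\nu=0$}.

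I would prove this auxiliary statement by induction on $\dim V$, the case $V=\emptyset$ being trivial. For the inductive step, Lemma~\ref{caput2} furnishes a \cqa\ $V'\subset V$ of strictly smaller dimension such that every $x\in V\setminus V'$ admits a neighborhood $\U\subset X$ in which $V\cap\U$ is contained in a finite union $W$ of closed submanifolds of $\U$ of dimension $<d$. Then $W\cap Z$ is an analytic subset of $\U$ lying in $Z$ of dimension $<d$, hence of positive codimension in $Z$; the SEP of $\nu$ on $Z$ therefore gives $\1_{W\cap Z}\nu|_\U=0$. Since $\supp\nu|_\U\subset V\cap Z\cap\U\subset W\cap Z$, this forces $\nu|_\U=0$, so $\supp\nu\subset V'$, and the inductive hypothesis closes the argument. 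Applying this to $\mu_{\mathrm{bad}}$ finishes $(\Rightarrow)$.

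The principal obstacle is precisely this auxiliary lemma: one has to upgrade the SEP from honest subvarieties of $Z$ to \cqa\ subsets, and the bridge is the local approximation of a \cqa\ by finite unions of submanifolds furnished by Lemma~\ref{caput2}, followed by a careful intersection with $Z$ to manufacture a genuine positive-codimension subvariety of $Z$ on which the SEP can be invoked. Granted this, everything else---the easy direction via Lemma~\ref{batong}, the good/bad decomposition, and the cancellation of the bad part---fits together routinely.
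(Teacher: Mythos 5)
Your proof is correct and follows essentially the same route as the paper: the "if" direction is the argument of the paper's Lemma~\ref{golpe} (irreducible elementary support, generic rank $d$ forces $\tilde\ff_\ell^{-1}(W)$ to be a proper subvariety, then the SEP of elementary currents), and the "only if" direction uses the same decomposition into rank-$d$ and lower-rank terms, with your auxiliary lemma being precisely the paper's Lemma~\ref{chaflan}, proved by the same induction via Lemma~\ref{caput2}. The only differences are cosmetic: you observe directly that $H_\ell\subset\ff^{-1}Z$ forces $r_\ell\le d$ instead of disposing of higher-rank terms via Lemma~\ref{golpe}, and you make explicit the intersection with $Z$ that the paper leaves implicit in the proof of Lemma~\ref{chaflan}.
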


For the proof we need the following lemmas.
\begin{lma}\label{golpe} 
Assume that $\mu=\ff_*\tau$, where $\ff:\U\to X$ and $\tau$ is an
elementary current on $\U$ with elementary support $H$. 
Moreover,  assume that the restriction of $\ff$ to $H$ has generic rank
$d$. Let $W\subset X$ be a subvariety of dimension $\leq d-1$. Then
$\mathbf 1_W\mu=0$. 
\end{lma}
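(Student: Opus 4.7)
The plan is to apply the pullback identity \eqref{brutus2} to reduce the conclusion to the SEP of an elementary current on its elementary support. Since $\tau$ is elementary, it has compact support, and so \eqref{brutus2} yields $\1_W \mu = \ff_*(\1_{\ff^{-1}W}\tau)$. As $\supp \tau\subset H$ one has $\1_H\tau=\tau$, and then \eqref{skolgard} gives
$$
\1_{\ff^{-1}W}\tau=\1_{\ff^{-1}W}\1_H\tau=\1_{\ff^{-1}W\cap H}\tau,
$$
so it suffices to prove $\1_{\ff^{-1}W\cap H}\tau=0$.

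For this I would show that $\ff^{-1}W\cap H$ is a \emph{proper} analytic subvariety of $H$. Writing $\tilde\ff=\ff|_H$, this set equals $\tilde\ff^{-1}(W)$, which is analytic as the preimage of an analytic set under a holomorphic map. To see it is proper, pick a point $y_0\in H$ at which $\tilde\ff$ attains its generic rank $d$. By the constant rank theorem, in a suitable neighborhood $\V\subset H$ of $y_0$ the map $\tilde\ff$ factors as a submersion onto an open subset of a $d$-dimensional submanifold $M$ of $X$. If $\tilde\ff^{-1}(W)$ were all of $H$, then $\tilde\ff(\V)$ would be an open subset of $M$ contained in $W$, forcing $\dim W\ge d$ and contradicting $\dim W\le d-1$. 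Hence $\tilde\ff^{-1}(W)$ has positive codimension in $H$.

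The conclusion is now immediate: by Example \ref{elex}, $\tau\in\W_\U^H$, so $\tau$ has the SEP on $H$. Applied to the positive-codimension subvariety $\tilde\ff^{-1}(W)\subset H$, this gives $\1_{\tilde\ff^{-1}(W)}\tau=0$, and thus $\1_W\mu=0$. The only step demanding real work is the codimension assertion in the middle paragraph, which rests on the constant rank theorem together with the dimension hypothesis on $W$; the remaining manipulations are a routine bookkeeping exercise with the $\1_V$ operation on pseudomeromorphic currents.
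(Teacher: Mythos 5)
Your argument is essentially the paper's: both proofs show that $\ff^{-1}W\cap H$ is a proper analytic subset of $H$ by combining the constant rank theorem with the bound $\dim W\le d-1$, and then conclude via the SEP of the elementary current $\tau$ on its elementary support (Example~\ref{elex}) together with the push-forward identity $\1_W\mu=\ff_*(\1_{\ff^{-1}W\cap H}\tau)$; your explicit bookkeeping with \eqref{brutus2} and \eqref{skolgard} is exactly the unwritten content of that identity in the paper.

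The one point you should repair is the inference ``$\tilde\ff^{-1}(W)\neq H$, hence $\tilde\ff^{-1}(W)$ has positive codimension in $H$.'' This is valid only when $H$ is irreducible. The elementary support $H=\U\cap\{t_{k+1}=\cdots=t_r=0\}$ can very well be reducible (for instance if $\U$, or its trace on the coordinate plane, is disconnected), and then your argument at a single point $y_0$ of maximal rank only rules out one irreducible component of $H$ from lying in $\tilde\ff^{-1}(W)$; the SEP needs $\tilde\ff^{-1}(W)$ to contain no component at all. The fix is immediate: either run your constant-rank argument at a generic point of each irreducible component of $H$ (which requires reading the hypothesis ``generic rank $d$'' componentwise), or reduce at the outset to irreducible $H$, which is exactly what the paper does in its first sentence by invoking Remark~\ref{gryning}. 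With that one sentence added, your proof coincides with the one in the paper.
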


\begin{proof}
In view of Remark ~\ref{gryning} we may assume that $H$ is an irreducible
subvariety of $\U$. 
Assume that $\ff^{-1} W\cap H=H$. Then, since $\ff|_H$ has
generic rank $d$, by the constant rank theorem, there is an open subset
$\W$ of $H$ such that $\ff (\W)$ is a manifold of dimension $d$. 
It follows that $W\supset \ff(\W)$ has dimension $\geq d$,
which contradicts that $W$ has dimension $\leq d-1$. 
Since $H$ is irreducible, we conclude that $\ff^{-1} W\cap H$ is a
subvariety of $H$ of positive codimension. Since $\tau$ has the SEP on
$H$, cf.\ Example ~\ref{elex}, it follows that 
$\mathbf 1_{W}\mu =\ff_*\big (\mathbf 1_{\ff^{-1}W\cap H}
  \tau\big )=0.$
\end{proof}

The next lemma is a generalization of Proposition~\ref{dimprin} (ii). 

\begin{lma}\label{chaflan}
If $\mu\in \W_X^Z$ has support on a \cqa\ $V\subset Z$ of positive
codimension, then $\mu=0$. 
\end{lma}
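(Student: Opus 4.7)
The plan is to imitate the proof of Proposition~\ref{dimprin}(ii), but with the ambient dimension principle replaced by the SEP of $\mu$ on $Z$. Proceed by induction on $d=\dim V$; the case $d=-1$ (i.e., $V=\emptyset$) is vacuous.

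For the inductive step, suppose the statement holds for all cqa's in $Z$ of dimension $\le d-1$, and let $V\subset Z$ be a cqa of dimension $d$ of positive codimension in $Z$, so that $d<\dim Z$. Apply Lemma~\ref{caput2} to obtain a cqa $V'\subset V$ of dimension $\le d-1$ such that, for every $x\in V\setminus V'$, there exist a neighborhood $\U\subset X$ of $x$ and a finite union $W\subset \U$ of submanifolds of $X$ of dimension $\le d$ with $V\cap\U\subset W$. Since $\supp\mu\subset V\subset Z$, in $\U$ the support of $\mu$ is contained in $W\cap Z$. Locally $W\cap Z$ is a subvariety of $\U\cap Z$ of dimension $\le d<\dim Z$, hence of positive codimension in $Z$. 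The SEP of $\mu$ on $Z$ therefore gives $\1_{W\cap Z}\mu=0$ in $\U$. On the other hand, $\supp\mu\cap\U\subset W\cap Z$ forces $\1_{\U\setminus(W\cap Z)}\mu=0$, so $\mu=\1_{W\cap Z}\mu=0$ in $\U$.

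As $x\in V\setminus V'$ was arbitrary, $\supp\mu\subset V'$. The cqa $V'\subset Z$ has dimension $\le d-1$, hence positive codimension in $Z$, and the induction hypothesis yields $\mu=0$. The one point that needs genuine attention is the claim that the submanifolds furnished by Lemma~\ref{caput2}, after intersection with $Z$, produce \emph{subvarieties of $Z$ of positive codimension in $Z$}; this is where the dimension bound $\dim W\le d<\dim Z$ is essential. Everything else is routine manipulation with the operators $\1_V$ and the SEP, parallel to the proof of Proposition~\ref{dimprin}(ii).
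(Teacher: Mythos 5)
Your proof is correct and follows essentially the same route as the paper's: apply Lemma~\ref{caput2} to the cqa, use the SEP of $\mu$ on $Z$ to conclude that $\mu$ vanishes near points where the cqa is locally contained in subvarieties of positive codimension in $Z$, and then descend to the lower-dimensional cqa $V'$ (the paper phrases the descent as "repeating the argument'' as in Proposition~\ref{dimprin}, you phrase it as induction on $\dim V$, which is the same thing). The one point you flag -- that the local submanifolds of dimension $\le d<\dim Z$ intersect $Z$ in subvarieties of positive codimension in $Z$ -- is handled at the same level of detail in the paper, so there is nothing to add.
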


\begin{proof}
Let $d$ be the dimension of $Z$. By Lemma ~\ref{caput2} there
is a \cqa\ $V'\subset V$ of dimension $\leq d-2$ such that
locally $V\setminus V'$ is contained in a variety of dimension $\leq
d-1$. 
Since $\mu$ has the SEP on $Z$ it
follows that $\supp \mu\subset V'$. By repeating this argument we get
that $\mu$ vanishes, cf.\ the proof of Proposition ~\ref{dimprin}. 
\end{proof}

\begin{proof}[Proof of Theorem~\ref{tacon}]
Let $\ff\colon \U\to X$ be a holomorphic mapping and let
$\tau$ be elementary with compact support in $\U$. Moreover assume
that the restriction 
of $\ff$ to the elementary support $H$ of $\tau$ has generic rank
$d$ and that $H\subset\ff^{-1}Z$. 
Then clearly $\ff_*\tau$ has support on $Z$. 
Let $\V$ be an open subset of $X$ and let $W\subset \V\cap Z$ be a
subvariety of positive codimension. We claim that $\mathbf 1_W\mu=0$. 
To prove this it suffices to show that $\mathbf 1_W\chi\mu=0$ for each
smooth cutoff function $\chi$ with compact support in $\V$. 
This however follows from Lemma ~\ref{golpe} applied to $\hat \ff_*
(\ff^*\chi\tau)$, where $\hat\ff:\ff^{-1}\V\to \V$ is the restriction of
$\ff$ to $\ff^{-1}\V$. 
Hence $\ff_*\tau$ is in $\W_X^Z$ and thus the ``if''-part of the proposition is proved.
\smallskip

For the converse assume that $\mu$ is in $\W_X^Z$.
With no loss of generality we can assume that  $\mu$ has compact support, so that we
have a finite representation like \eqref{planta}, without any special
assumption on the $\ff$ and $\tau_\ell$. In view of Lemma ~\ref{batong}
(and its proof)
we  may also assume that all the elementary supports of the
$\tau_\ell$ are contained in  $\ff^{-1} Z$.  
Consider $\tau_\ell$ such that $\tilde\ff_\ell$ has generic rank $\geq
d+1$. Since $\ff_*\tau_\ell$ is contained in $Z$ of dimension $d$, $\ff_*\tau_\ell$
vanishes by Lemma ~\ref{golpe}. Thus we may assume from now on that
$\rank \tilde\ff_\ell \leq d$ for all $\ell$. Now write $\mu=\mu'+\mu''$, where
$\mu'$ is the sum of all $\ff_*\tau_\ell$ for which $\rank\tilde\ff_\ell=d$. Then
$\mu'$ is in $\W_X^Z$ by the first part of the proof. Hence so is $\mu''$.  

If $\rank \tilde\ff_\ell\le d-1$, then  $\supp\ff_*\tau_\ell$ is contained in a \cqa\ $\ff(H)$ of
dimension $\leq d-1$. Thus $\supp \mu''$ is as well, and hence
it vanishes in view of Lemma
~\ref{chaflan}. Hence $\mu=\mu'$.  
 \end{proof}


As an immediate consequence we get: 

\begin{cor} If $\mu_j$ is in $\W_{X_j}^{Z_j}$, $j=1,2$, then 
$\mu_1\otimes \mu_2$ is in
$\W_{X_1\times X_2}^{Z_1\times Z_2}$.
\end{cor}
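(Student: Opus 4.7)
The plan is to apply the characterization in Theorem~\ref{tacon} in both directions. First, using the ``only if'' direction, I would write each $\mu_j$ as a locally finite sum
$$
\mu_j = \sum_{\ell} (\ff_j^{(\ell)})_* \tau_j^{(\ell)},
$$
where each $\tau_j^{(\ell)}$ is elementary with elementary support $H_j^{(\ell)} \subset (\ff_j^{(\ell)})^{-1} Z_j$, and the restriction of $\ff_j^{(\ell)}$ to $H_j^{(\ell)}$ has generic rank $d_j = \dim Z_j$.

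Next I would expand the tensor product bilinearly,
$$
\mu_1 \otimes \mu_2 = \sum_{\ell, m} (\ff_1^{(\ell)} \times \ff_2^{(m)})_* \bigl(\tau_1^{(\ell)} \otimes \tau_2^{(m)}\bigr),
$$
noting that pushforward commutes with tensor product. The tensor product $\tau_1^{(\ell)} \otimes \tau_2^{(m)}$ is again an elementary current (in disjoint sets of coordinates, so it is just a tensor product of one-variable principal value and residue factors times the smooth form $\alpha_1 \wedge \alpha_2$), with elementary support $H_1^{(\ell)} \times H_2^{(m)}$. This elementary support is contained in $(\ff_1^{(\ell)} \times \ff_2^{(m)})^{-1}(Z_1 \times Z_2)$, since each factor lies in the pullback of the corresponding $Z_j$.

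For the rank condition, the differential of $\ff_1^{(\ell)} \times \ff_2^{(m)}$ restricted to $H_1^{(\ell)} \times H_2^{(m)}$ is block-diagonal, so its generic rank equals the sum of the generic ranks of the restrictions, namely $d_1 + d_2 = \dim(Z_1 \times Z_2)$. Thus the representation of $\mu_1 \otimes \mu_2$ satisfies the hypotheses of the ``if'' direction of Theorem~\ref{tacon}, which gives $\mu_1 \otimes \mu_2 \in \W_{X_1 \times X_2}^{Z_1 \times Z_2}$.

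The only mildly delicate point is verifying that the tensor product of two elementary currents is itself elementary (in the sense of \eqref{1elem}), with the expected elementary support; but this is immediate from the fact that elementary currents are already defined as tensor products of one-variable factors times a smooth form, so combining two such expressions in disjoint coordinate groups just produces another such expression. Local finiteness of the resulting representation is also clear since it is inherited from the local finiteness of the representations of $\mu_1$ and $\mu_2$.
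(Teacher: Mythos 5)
Your argument is correct and is exactly the route the paper intends: the corollary is stated as an immediate consequence of Theorem~\ref{tacon}, obtained by tensoring the representations from the ``only if'' direction and checking the elementary support and generic rank conditions for the product map, as you do. The details you verify (elementarity of the tensor product, block-diagonal differential giving generic rank $d_1+d_2$, local finiteness) are precisely what makes the consequence immediate.
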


\section{Stalkwise injectivity of $\PM_X$}\label{stalk}
In this section $X$ is a smooth manifold. 

\begin{thm} \label{statut}
Let $X$ be a smooth manifold. The sheaves $\PM_X$ are stalkwise injective
for all $\ell,k$. 
\end{thm}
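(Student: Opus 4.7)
The plan is to apply Baer's criterion: since $\Ok_{X,x}$ is a Noetherian local ring, injectivity of the $\Ok_{X,x}$-module $\PM_{X,x}$ reduces to showing that every $\Ok_{X,x}$-linear map $\phi\colon\J\to\PM_{X,x}$ from a finitely generated ideal $\J=(f_1,\ldots,f_r)\subset\Ok_{X,x}$ extends to $\Ok_{X,x}$. Setting $\mu_j:=\phi(f_j)$, the well-definedness of $\phi$ encodes the Koszul syzygies, in particular $f_i\mu_j=f_j\mu_i$, and an extension is exactly a \pmm\ current $\mu\in\PM_{X,x}$ with $f_j\mu=\mu_j$ for each $j$. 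The task is therefore to solve this simultaneous division problem inside $\PM_{X,x}$.

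I would first treat the principal case $r=1$ to fix ideas. Since $[1/f]\in\ASM(X)$, Theorem~\ref{hittills} makes $\eta:=[1/f]\mu_1$ a well-defined \pmm\ current, and one computes $f\eta=\1_{X\setminus Z(f)}\mu_1=\mu_1-\1_{Z(f)}\mu_1$. It remains to produce a \pmm\ correction $\nu$ satisfying $f\nu=\1_{Z(f)}\mu_1$. I would pass to a log-resolution $\pi\colon\tilde X\to X$ on which $\pi^*f$ is a monomial; then Proposition~\ref{struktur}, applied on the smooth normal-crossings components, together with the one-variable identity $w\cdot\dbar[1/w^{m+1}]=\dbar[1/w^m]$, lets one write an explicit $\tilde\nu\in\PM(\tilde X)$ with $\pi^*f\cdot\tilde\nu=\pi^*(\1_{Z(f)}\mu_1)$. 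Pushing forward via Lemma~\ref{grus1} and applying the projection formula produces the desired $\nu$ on $X$, and $\mu:=\eta+\nu$ solves the principal case.

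For $r\geq 2$ I would use a Koppelman-type integral formula of Andersson--Wulcan type adapted to the tuple $f=(f_1,\ldots,f_r)$, again on a log-resolution $\pi\colon\tilde X\to X$ on which $\pi^*\J$ is principal. The formula provides Koszul-type currents $U^f$ and $R^f$ with $U^f$ built from principal value currents and satisfying a homotopy identity $\nabla_f U^f=1-R^f$, where $\nabla_f=\delta_f-\dbar$. Using the syzygies, assemble the $\mu_j$ into a $\delta_f$-closed Koszul cochain; the residue term $R^f$ acting on this cochain then vanishes for support/degree reasons. The candidate solution is $\mu:=\pi_*\!\bigl(U^f\wedge(\text{cochain built from the }\mu_j)\bigr)$, which can be written as a finite sum $\sum_j A_j\wedge\mu_j$, each $A_j$ being a proper direct image of a principal value current. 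The products $A_j\wedge\mu_j$ are well-defined \pmm\ currents by Theorem~\ref{hittills}, and $\mu$ is \pmm\ by Lemma~\ref{grus1}.

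The main obstacle will be verifying that $f_j\mu=\mu_j$ exactly, not merely modulo error terms. The homotopy identity $\nabla_f U^f=1-R^f$ only gives $f_j\mu-\mu_j$ up to contributions supported on the exceptional locus of $\pi$ and on $Z(f_1)\cup\cdots\cup Z(f_r)$; these sets are \cqa s of positive codimension in $X$. Here Theorem~\ref{tom} enters decisively, because the inputs $\mu_j$ are arbitrary \pmm\ currents, hence a priori highly singular along exactly these loci, and the regularity statement together with the dimension principle Proposition~\ref{dimprin} for \cqa\ sets is what forces the error contributions to vanish identically. Orchestrating these cancellations, and checking that the integral formula remains valid when paired with arbitrary \pmm\ inputs—which is exactly where the ability from \cite{AW3} to multiply \pmm\ currents by proper direct images of principal value currents is indispensable—is the delicate technical core of the argument.
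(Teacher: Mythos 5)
Your Baer-criterion reduction is legitimate (and equivalent to the paper's formulation via free resolutions), but the construction you propose breaks down at two concrete points. Already in the case $r=1$ you write $\pi^*(\1_{Z(f)}\mu_1)$ on a log-resolution: there is no pullback of pseudomeromorphic currents under a modification. A current in $\PM_X$ is a sum of direct images of elementary currents under holomorphic maps that need not factor through $\tilde X$, so the object you want to divide upstairs is simply not defined, and Proposition~\ref{struktur} cannot be applied since $Z(f)$ is in general singular. The paper's treatment of this case avoids the correction term altogether: by the Koppelman formula and Proposition~\ref{pelargonia3} one writes $\mu=\mu_1+\dbar\mu_2$ with $\mu_j\in\W$, and then $\nu=(1/f)\mu_1+\dbar\big((1/f)\mu_2\big)$ satisfies $f\nu=\mu$ exactly, because currents in $\W$ obey $f(1/f)\mu_j=\mu_j$ (Example~\ref{skot}); no division across $Z(f)$ of a general $\PM$-current is ever needed.

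More seriously, for $r\ge 2$ your argument hinges on the claim that the residue term ``vanishes for support/degree reasons,'' and at the end you hope that Theorem~\ref{tom} together with Proposition~\ref{dimprin} forces the error contributions to vanish identically. They do not. First, Koszul-type currents built only from the relations $f_i\mu_j=f_j\mu_i$ are insufficient unless $\J$ is a complete intersection; one must work with the currents $U,R$ attached to a full locally free resolution, i.e.\ use all syzygies encoded in the well-definedness of $\phi$. Second, even then, although the components $R^k$ with $k\ge 1$ vanish by exactness, the limit of the regularized residue terms paired with the pseudomeromorphic data need not vanish --- this is exactly the difficulty the paper flags, citing \cite[Example 4.23]{AW3}. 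The actual core of the paper's proof is a descent-and-termination mechanism that your proposal lacks: one builds operators giving $\mu=f_k^*\Ak\mu+(\text{error term})$ with both outputs pseudomeromorphic, proves (Lemma~\ref{formel}, which rests on Theorem~\ref{tom} through Corollary~\ref{nusa}) that the error term has support on a cqa of strictly smaller dimension than $\supp\mu$, and then shows (Lemma~\ref{borg}) that finitely many iterations annihilate it, so the solution is the finite sum $\Ak\big(\mu+(\text{error})+(\text{error})^2+\cdots\big)$. Without this iteration your argument stops precisely where the real difficulty begins.
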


The corresponding statement for general currents $\mathcal C_X$ is a classical result
due to Malgrange. For a quite simple proof by integral formulas,
see \cite[Section 2]{Aext}.   

\smallskip
\noindent Theorem ~\ref{statut} means:   {\it If $E_k$ are holomorphic vector bundles and
\begin{equation} \label{billing}
\cdots \stackrel{f_2}{\to}\Ok(E_1)\stackrel{f_1}{\to} \Ok(E_0)\to \F\to 0
\end{equation}
is a locally free resolution of a coherent sheaf $\F$ over $X$,  
then the induced sheaf complex
\begin{equation*}
0\to\Homs_\Ok(\F,\PM)\to \Homs_\Ok(\Ok(E_0),\PM)\stackrel{f_1^*}{\to}
\Homs_\Ok(\Ok(E_1),\PM)\stackrel{f_2^*}{\to}\cdots
\end{equation*}
is exact.}

\smallskip
\noindent The exactness at the first two places is trivial, so we are to prove that
the equation $f_{k}^*u=\mu$ kan be (locally) solved in $\Homs_\Ok(\Ok(E_{k-1}),\PM)$ for each
$\mu$ in $\Homs_\Ok(\Ok(E_{k}),\PM)$ such that $f_{k+1}^*\mu=0$, $k=1,2, \ldots$.  

Note that Theorem ~\ref{statut} is equivalent to that $\PM_X^{\ell,k}$
is injective for each $\ell, k$.

\begin{ex} \label{putin}
Let $f$ be a single generically non-vanishing holomorphic function.
Then 
$$
0\to \Ok \stackrel{f}{\to}\Ok\to \Ok/(f)\to 0
$$
is a free resolution of $\F=\Ok/(f)$.  The condition $f_2^*\mu=0$ is
vacuous in this case so the stalkwise injectivity means that 
the equation $f\nu=\mu$ is locally solvable for any \pmm $\mu$, which is
precisely the content of \cite[Proposition 3.1]{AW3} (in case $X$ is smooth).
\end{ex}

We postpone the proof of Theorem ~\ref{statut} to Section ~\ref{bevis} and first
discuss some consequences. To this end we need some facts about residues as well as
solvability of the $\dbar$-equation for \pmm currents.

\subsection{Residues associated to a locally free resolution}\label{pyton}
Consider a locally free resolution \eqref{billing} of the coherent sheaf
$\F$ on $X$,
let $E=\oplus E_k$ and $f=f_1+f_2+\cdots$. We equip $E$ with a 
superstructure so that $E_+=\oplus E_{2j}$ and $E_-=\oplus E_{2j+1}$.
Then both $f$ and $\dbar$ are odd mappings on the sheaf
$\Cu(E)$ of $E$-valued currents,  and thus so is $\nabla=f-\dbar$.
Let $\nabla_{\End E}$ be the induced mapping on endomorphisms
on $E$, see \cite{AW1} for more details.  

Let us choose Hermitian metrics on the vector bundles $E_k$, and let $U$ and $R$ be the associated $\End E$-valued
principal value, and residue currents, respectively, as defined in  
\cite[Section 2]{AW1}, so that $\nabla_{\End E} U=I_E-R$.
It follows from the construction that $U$ is almost semi-meromorphic
on $X$ and that $R$ is the residue of $U$, cf.,  Section ~\ref{lisa2}. 
Thus $R$ has support on $Z:=ZSS(U)$, which by construction is precisely the analytic set where 
$\F$ is not locally free, or equivalently,
the set where the complex $\big (\Ok(E_\bullet),f_\bullet\big )$ is not
pointwise exact.  

If $\chi\sim\chi_{[1,\infty)}$, as before, and   
$\chi_\epsilon=\chi(|h|^2/\epsilon)$, where $h$ is a holomorphic tuple
whose common zero set is $Z$, then
$U_\epsilon:=\chi_\epsilon U$ is smooth for $\epsilon>0$ and 
$U_\epsilon\to U$ when $\epsilon\to 0$; in fact,
  $\chi(|f_1|^2/\epsilon)$ will do.  
We can define the smooth form $R_\epsilon$ so that
$\nabla_{\End E} U_\epsilon=I_E-R_\epsilon$.    
Then clearly
$R_\epsilon\to R$ when $\epsilon\to 0$. 
Since $\nabla_{\End E} U=I_E$ outside $Z$ 
it follows that 
\begin{equation}\label{reps}
R_\epsilon=(1-\chi_\epsilon)I_E+\dbar\chi_\epsilon\w U.
\end{equation}

Let $U^\ell_k$ and $R_k^\ell$ be the components of $U$ and $R$,
respectively,  that take values in $\Hom(E_\ell, E_k)$. 
By \cite[Theorem 3.1]{AW1},  $R^\ell_k=0$ when $\ell\ge 1$.  Thus
we can write $R_k$ rather then $R^0_k$.

\begin{ex} \label{putin2} For the resolution in Example~\ref{putin}, 
we have $U=1/f$ and $R=\dbar(1/f)$.
\end{ex}

 \subsection{The $\dbar$-equation for \pmm currents} \label{buss}
Let us recall how one can solve the $\dbar$-equation by means of simple
integral  formulas. 
From  Example ~\ref{bex2} we know that  
$$
B':=\sum_{k=1}^n b'\w (\dbar b')^{k-1}
$$
is almost semi-meromorphic in $\C^n_\eta$ if
$
b'=\partial|\eta|^2/
2\pi i|\eta|^2.
$

Thus $B'\otimes 1$ is almost semi-meromorphic in $\C_\eta^n\times \C_\xi^n$, 
and by a linear change of coordinates we find that 
$B:=\eta^* B'$ is almost semi-meromorphic in $\C_\zeta^n\times \C_z^n$,
if $\eta(\zeta,z)=\zeta-z$.  If $\mu$ is any current with compact support
in $\C^n_\zeta$, one can define the convolution operator
\begin{equation*}
\K\mu(z)=\int_\zeta B_{n,n-1}(\zeta,z)\w\mu(\zeta),
\end{equation*}
where $B_{n,n-1}$ denotes the component of bidegree $(n,n-1)$, 
for instance by replacing $B$ by the regularization
$B_\epsilon=\chi(|\zeta-z|^2/\epsilon) B$ and taking the limit
when $\epsilon\to 0$.
More formally,  $\K\mu = p_*(B_{n,n-1}\w \mu\otimes 1)$, where
$p$ is the natural projection $(\zeta,z)\mapsto z$.
If $\mu$ is pseudomeromorphic, then also $\mu\otimes 1$ is, cf.\ Lemma~\ref{1tensor}, and thus
$B\w \mu\otimes 1$ is just multiplication by the almost semi-meromorphic
current $B$, see Theorem~\ref{hittills}. 
It follows that $\K\mu$ is \pmm if $\mu$ is.

The top degree term $B_{n,n-1}$ is the classical Bochner-Martinelli
kernel. 
The other terms in $B$ will play an important role below. 
It is well-known that 
\begin{equation}\label{bkopp}
\mu=\dbar \K \mu+ \K\dbar\mu.
\end{equation}

\begin{prop}\label{reso1}
If $X$ is a smooth manifold, then 
\begin{equation*}
0\to \Omega^p_X\to \PM_X^{p,0}\stackrel{\dbar}{\to}\PM_X^{p,1}\stackrel{\dbar}{\to}\cdots
\end{equation*}
is a fine resolution of $\Omega^p_X$.  
\end{prop}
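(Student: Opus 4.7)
The plan is to verify the two ingredients of a fine resolution separately. Fineness of $\PM_X^{p,k}$ is immediate from \eqref{kondor}: since $\PM_X$ is closed under multiplication by smooth forms, each $\PM_X^{p,k}$ is an $\E_X$-module and hence admits partitions of unity.

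For exactness I plan to use the Koppelman homotopy formula \eqref{bkopp}, together with the two facts recalled just above: $\K$ sends pseudomeromorphic currents to pseudomeromorphic currents (via Theorem~\ref{hittills} applied to the almost semi-meromorphic kernel $B$), and smooth forms are pseudomeromorphic. Working locally around a point $z_0\in X$, take a germ $\mu\in\PM_X^{p,k}$ with $\dbar\mu=0$ on an open neighborhood $U$ of $z_0$, and pick a cutoff $\chi\in C^\infty_c(U)$ that equals $1$ on a smaller neighborhood $V$ of $z_0$. Applying \eqref{bkopp} to $\chi\mu$ and using $\chi\dbar\mu=0$, one gets on $V$
\begin{equation*}
\mu = \dbar\K(\chi\mu) + \K(\dbar\chi\wedge\mu).
\end{equation*}
The second term $v:=\K(\dbar\chi\wedge\mu)$ is obtained by integrating against $B(\zeta,z)$ over $\zeta\in\supp\dbar\chi$, a set that stays away from $z_0$. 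Since $B$ is smooth off the diagonal, $v$ is a smooth form on $V$.

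For $k\ge 1$, applying \eqref{bkopp} to the compactly supported current $\dbar\chi\wedge\mu$ (which is $\dbar$-closed since $\dbar\mu=0$ on $U\supset\supp\dbar\chi$) gives $\dbar v = \dbar\chi\wedge\mu$, which vanishes on $V$; thus $v$ is a smooth $\dbar$-closed form on $V$, and the classical smooth Dolbeault lemma yields a smooth $w$ on a possibly smaller neighborhood with $\dbar w = v$. Then $u:=\K(\chi\mu)+w$ lies in $\PM_X^{p,k-1}$ and satisfies $\mu=\dbar u$. For $k=0$ the term $\K(\chi\mu)$ vanishes for bidegree reasons, so $\mu=v$ on $V$, which is smooth and $\dbar$-closed, hence a holomorphic $p$-form; this identifies the kernel on the left with $\Omega^p_X$.

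The main point to take care with is the analytic content of the Koppelman operator in the pseudomeromorphic setting, specifically that the smoothness of $B$ off the diagonal genuinely makes $\K(\dbar\chi\wedge\mu)$ smooth on $V$, and that the homotopy formula \eqref{bkopp} remains valid at the level of pseudomeromorphic currents. Both points are handled by the regularization $B_\epsilon=\chi(|\zeta-z|^2/\epsilon)B$ and Theorem~\ref{hittills}; the remaining manipulations are routine.
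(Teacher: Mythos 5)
Your proof is correct and takes essentially the same route as the paper: apply the Koppelman formula \eqref{bkopp} to $\chi\mu$, observe that $\K(\chi\mu)$ is pseudomeromorphic (this is Proposition~\ref{pelargonia3}) while $\K(\dbar\chi\w\mu)$ is smooth near the point because $B$ is singular only on the diagonal, and is $\dbar$-closed there, hence locally $\dbar$ of a smooth form. Your additional details (fineness, the $k=0$ case by degree reasons, and the Koppelman argument that $v$ is $\dbar$-closed) merely spell out steps the paper treats as well known.
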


Here $\Omega_X^p$ denotes the sheaf of holomorphic $p$-forms.
This proposition is implicitly proved in \cite{AS} but for the reader's convenience
we supply a simple direct argument.

\begin{proof}
Since the case $k=0$ is well-known let us assume that $\mu$ is \pmm of bidegree $(p,k)$, $k\ge 1$,
and $\dbar \mu=0$. Fix a point $x\in X$ and let  $\chi$ be a cutoff function
in a coordinate \nbh of $x$ that is identically $1$ in a \nbh of $x$.  We can then apply 
\eqref{bkopp} to $\chi\mu$ and so we get that $\chi\mu=\dbar \K(\chi\mu)+\K(\dbar\chi\w\mu)$.
Now $\K(\chi\mu)$ is \pmm in view of Proposition ~\ref{pelargonia3} below. Furthermore, $\K(\dbar\chi\w\mu)$ is
smooth where $\chi=1$ since $B$ only has singularities at the diagonal. 
Since this term in addition is $\dbar$-closed near  $x$
it is locally of the form $\dbar\psi$ for some smooth $\psi$.
 It follows that there is a local \pmm solution at $x$ to $\dbar \nu=\mu$.   
\end{proof}

\begin{prop}\label{pelargonia3} 
The integral operator $\K$ maps \pmm currents on $\C^n$ with compact support 
into $\W(\C^n)\subset\PM(\C^n)$. 
\end{prop}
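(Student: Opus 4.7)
The plan is to check that $\K\mu$ has the standard extension property on $\C^n_z$; membership in $\PM(\C^n)$ is already argued in the discussion preceding the statement, using that $B$ is almost semi-meromorphic in $\C^n_\zeta\times\C^n_z$, that $\mu\otimes 1$ is pseudomeromorphic by Lemma~\ref{1tensor}, that the product $B\wedge(\mu\otimes 1)$ is pseudomeromorphic by Theorem~\ref{hittills}, and that the projection $p\colon(\zeta,z)\mapsto z$ is proper on $\supp\mu\times\C^n_z$ so that its pushforward lands in $\PM$ via Lemma~\ref{grus1}.

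I plan to show $\1_W\K\mu=0$ for every subvariety $W\subset\C^n_z$ of positive codimension. Since the claim is local, I first multiply by a smooth cutoff $\rho$ on $\C^n_z$ that equals $1$ on a prescribed ball and has compact support, writing
\[
\rho\cdot\K\mu=p_*\big((1\otimes\rho)\wedge B\wedge(\mu\otimes 1)\big)_{n,n-1}.
\]
The argument of $p_*$ is now a pseudomeromorphic current with compact support in $\C^n_\zeta\times\C^n_z$, and on the open set $\{\rho\equiv 1\}$ this coincides with $\K\mu$; by locality of $\1_W$ it suffices to check the claim for this compactly supported representative.

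Next I chain together the commutation rules for $\1_V$: \eqref{brutus2} passes $\1_W$ through $p_*$, turning $W$ into $p^{-1}W=\C^n_\zeta\times W$; \eqref{brutus1} peels off the smooth factor $1\otimes\rho$; \eqref{pelargonia} peels off the almost semi-meromorphic factor $B$, leaving $\1_{\C^n_\zeta\times W}(\mu\otimes 1)$, which by \eqref{pelargonia2} equals $\mu\otimes\1_W 1$. The constant current $1$ on $\C^n_z$ is smooth, so by Lemma~\ref{3apsko} and dominated convergence $\1_{\C^n_z\setminus W}1=1$, hence $\1_W 1=0$, and the entire expression collapses. Since $\rho\equiv 1$ can be arranged on any preassigned bounded open set, this proves $\1_W\K\mu=0$ on all of $\C^n_z$.

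The main care point in this approach is the reduction to compact support via the cutoff $\rho$: \eqref{brutus2} as stated in the paper assumes the source current has compact support, and it is precisely the insertion of $\rho$ that makes $(1\otimes\rho)\wedge B\wedge(\mu\otimes 1)$ compactly supported so that the rule applies. Everything else is a routine application of the functoriality identities for $\1_V$ with respect to pushforward, smooth multiplication, almost semi-meromorphic multiplication, and tensor products; notably neither Theorem~\ref{tom} nor the deeper technology of Section~\ref{taknock} appears to be needed for this proposition. One should also observe that extraction of the bidegree $(n,n-1)$ component commutes with $\1_V$, since $\1_V$ is defined as a limit of multiplication by scalar cutoffs.
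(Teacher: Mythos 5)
Your argument is correct and is essentially the paper's own proof: the paper deduces the proposition from Proposition~\ref{pelargonia4}, whose proof is exactly your chain $\1_W p_*=p_*\1_{p^{-1}W}$ via \eqref{brutus2}, then \eqref{pelargonia} to peel off the almost semi-meromorphic kernel $B$, then \eqref{pelargonia2} and $\1_W 1=0$. Your extra cutoff $\rho$ and the remark on bidegree components are just careful bookkeeping of the same route, so there is nothing substantively different to compare.
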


This is an immediate consequence of

\begin{prop}\label{pelargonia4}
If $A$ is almost semi-meromorphic on $X\times Y$, 
$\mu\in\PM(X)$ has compact support, and    $\pi\colon X\times Y\to Y$
is the natural projection,
then $\pi_*(A\w \mu\otimes 1)$ is in $\W(Y)$.
\end{prop}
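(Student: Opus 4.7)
Set $T:=\pi_*(A\w \mu\otimes 1)$. I would prove the statement in two steps: first that $T\in\PM(Y)$, and second that $T$ has the SEP on $Y$, i.e.\ $\mathbf{1}_W T=0$ for every subvariety $W\subset\mathcal{U}\subset Y$ of positive codimension.

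For pseudomeromorphicity, the constant $1$ is a smooth form on $Y$ and hence is pseudomeromorphic, so Lemma~\ref{1tensor} yields $\mu\otimes 1\in\PM(X\times Y)$. Since $A\in\ASM(X\times Y)$, Theorem~\ref{hittills} gives $A\w\mu\otimes 1\in\PM(X\times Y)$, with support contained in $\supp\mu\times Y$. Because $\mu$ has compact support, $\pi$ is proper on this support, so the direct image $T$ is a well-defined current. To see it is pseudomeromorphic, I would work locally in $Y$: given $y_0\in Y$, let $\chi$ be a cutoff in a neighborhood of $y_0$; then $(1\otimes\chi)(A\w\mu\otimes 1)$ is pseudomeromorphic (multiplication by a smooth form preserves $\PM$) and has compact support, so Lemma~\ref{grus1} applies to give $\pi_*((1\otimes\chi)(A\w\mu\otimes 1))\in\PM(Y)$. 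Since the local pieces agree with $\chi T$, this shows $T\in\PM(Y)$.

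For the SEP, the strategy is to push $\mathbf{1}_W$ successively through $\pi_*$, through $A\,\w$, and finally to split it across the tensor product $\mu\otimes 1$. Concretely, localizing in $Y$ as above so that \eqref{brutus2} applies, then using \eqref{pelargonia} with the analytic subvariety $X\times W$ of $X\times\mathcal{U}$, and finally \eqref{pelargonia2}, I obtain the chain
\begin{equation*}
\mathbf{1}_W T \;=\; \pi_*\bigl(\mathbf{1}_{X\times W}(A\w\mu\otimes 1)\bigr) \;=\; \pi_*\bigl(A\w \mathbf{1}_{X\times W}(\mu\otimes 1)\bigr) \;=\; \pi_*\bigl(A\w \mu\otimes \mathbf{1}_W 1\bigr).
\end{equation*}
The constant $1$ is smooth, hence lies in $\W_Y$, so $\mathbf{1}_W 1=0$ because $W$ has positive codimension. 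Therefore $\mathbf{1}_W T=0$, which is the SEP, and so $T\in\W(Y)$.

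The proof is mostly bookkeeping with the operations introduced in Section~\ref{pmsection}; the one point requiring some care is the non-properness of $\pi$, which is handled by exploiting the compact support of $\mu$ and localizing in $Y$ via cutoffs. No new ideas beyond Theorem~\ref{hittills} and the identities \eqref{brutus2}, \eqref{pelargonia}, \eqref{pelargonia2} are needed.
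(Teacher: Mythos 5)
Your proposal is correct and follows essentially the same route as the paper: the SEP is verified by exactly the same chain $\1_W\pi_*(A\w\mu\otimes1)=\pi_*(\1_{X\times W}(A\w\mu\otimes1))=\pi_*(A\w\1_{X\times W}(\mu\otimes1))=\pi_*(A\w\mu\otimes\1_W 1)=0$ via \eqref{brutus2}, \eqref{pelargonia}, \eqref{pelargonia2}, and $\1_W 1=0$. Your extra care about pseudomeromorphicity of the push-forward (localizing in $Y$ and using Lemma~\ref{grus1} since $\pi$ is proper on $\supp\mu\times Y$) just spells out what the paper leaves implicit.
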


\begin{proof} 
By Theorem ~\ref{hittills} $\pi_*(A\w \mu\otimes 1)$ is in
$\W(Y)$. Assume that 
$V\subset\U\subset Y$ has positive codimension. Then, in view of \eqref{brutus2}, \eqref{pelargonia}  and \eqref{pelargonia2}, we have 
\begin{multline*}
\1_V\pi_*\big (A\w (\mu\otimes 1)\big)=\pi_*\big(\1_{ X\times V}(A\w (\mu\otimes 1))\big)=\\
\pi_*\big(A\w\1_{X\times V}(\mu\otimes 1)\big)=
\pi_*\big(A\w (\1_{X}\mu\otimes\1_V 1)\big)=0,
\end{multline*}
since $\1_V 1=0$. 
\end{proof}

\subsection{A generalization of the Dickenstein-Sessa decomposition}\label{dickenstein}
Let $Z$ be a reduced analytic variety of pure codimension $\nu$. 
A $(p,\nu)$-current $\mu$ on $X$ is a \emph{Coleff-Herrera current} on $Z$, $\mu \in
\CH_{p}^{Z}$,
if $\dbar \mu = 0$, $\bar{\psi} \mu = 0$ for all holomorphic functions $\psi$ vanishing
on $Z$, and $\mu$ has the SEP with respect to $Z$; see, e.g., \cite[Section~6.2]{Bj}. 
Let $(\Cu^Z_{p,k}, \dbar)$ be the Dolbeault complex of $(p,*)$-currents on $X$
with support on $Z$. 
Dickenstein and Sessa proved in \cite{DS, DS2}\footnote{In \cite{DS}
  the Dickenstein-Sessa decomposition \eqref{dsa} was proved for complete
  intersections $Z$ and in \cite[Proposition~5.2]{DS2} for arbitrary
  $Z$ of pure dimension.}, see also \cite{Aext, Bj}, 
that Coleff-Herrera currents are canonical
representatives in moderate cohomology, i.e., 
\begin{equation}\label{dsa}
\mathcal Ker_{\dbar}\,\Cu^Z_{p,\nu}=\CH^Z_{p}\oplus \dbar
\Cu^Z_{p,\nu-1}; 
\end{equation}
in other words, each $\dbar$-closed current $\mu$ with support on $Z$ has a unique 
decomposition
\begin{equation}\label{spott}
\mu=\mu_1+\dbar\gamma,
\end{equation}
where $\mu_1$ is in $\CH^Z_{p}$ and $\gamma$ has support on $Z$.
\smallskip

Let $\F$ be a coherent sheaf over $X$ and let
\eqref{billing} be a locally free resolution.  
Combining Theorem~\ref{statut} and Proposition~\ref{reso1} we find that 
$$
\M_{\ell,k}=\Homs_\Ok\big (\Ok(E_\ell,\PM^{p,k})\big )
 $$
is a double complex with vanishing cohomology except at $\ell=0$ and $k=0$, where
the kernels are $\Homs_\Ok(\F,\PM^{p, k})$ and $\Homs_\Ok(\Ok(E_\ell),\Omega^p)$,
respectively.  
The same holds if $\PM^{p,\bullet}$ are replaced by 
the sheaves of general currents $\Cu^{p,\bullet}$, in view of 
the well-known local solvability of $\dbar$ for 
$\Cu$, and Malgrange's theorem.  
By standard cohomological algebra
we get 

\begin{thm} If $\F$ is a coherent sheaf over $X$ and 
\eqref{billing} is a locally free resolution, then there are canonical isomorphisms
\begin{multline}\label{valand}
\Exts_\Ok^k(\F,\Omega^p)\simeq \Hs^k(\Homs_\Ok(\Ok(E_\ell),\Omega^p), f^*_\bullet)\simeq \\
\Hs^k(\Homs_\Ok(\F,\PM^{p,\bullet}),\dbar)\simeq
\Hs^k(\Homs_\Ok(\F,\Cu^{p,\bullet}),\dbar), \quad k\ge1.
\end{multline}
\end{thm}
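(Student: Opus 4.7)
The plan is to interpret the theorem as the classical ``balancing of Ext'' argument, applied to the double complex
\[
\M_{\ell,k}=\Homs_\Ok\big(\Ok(E_\ell),\PM^{p,k}\big),
\]
equipped with the horizontal differential induced by $f_\bullet^*$ and the vertical differential $\dbar$. Since everything is local on $X$, I would work stalkwise throughout and obtain the claimed comparisons as isomorphisms of cohomology sheaves.

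First I would verify that the rows of $\M_{\bullet,k}$ are exact off column zero. Because $\PM^{p,k}$ is a stalkwise injective $\Ok$-module by Theorem~\ref{statut}, the contravariant functor $\Homs_\Ok(-,\PM^{p,k})$ sends the locally free resolution \eqref{billing} to an exact sequence whose only non-vanishing cohomology sheaf sits at $\ell=0$ and equals $\Homs_\Ok(\F,\PM^{p,k})$. Next I would show that the columns $\M_{\ell,\bullet}$ are exact off row zero: since $E_\ell$ is locally free of finite rank, $\Homs_\Ok(\Ok(E_\ell),\PM^{p,\bullet})$ is locally a finite direct sum of copies of $\PM^{p,\bullet}$, and Proposition~\ref{reso1} identifies this as a fine resolution of $\Omega^p$, so the only non-vanishing $\dbar$-cohomology sheaf sits at $k=0$ and equals $\Homs_\Ok(\Ok(E_\ell),\Omega^p)$.

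At this point the two edge spectral sequences of the double complex both collapse at $E_2$: one identifies the cohomology of the total complex with $\Hs^k(\Homs_\Ok(\F,\PM^{p,\bullet}),\dbar)$, the other with $\Hs^k(\Homs_\Ok(\Ok(E_\bullet),\Omega^p),f^*_\bullet)$, and equating them produces a canonical isomorphism between the second and third terms of \eqref{valand}; alternatively one could avoid spectral sequences altogether by a direct diagram chase on the total complex, exactly as in the standard proof that $\mathrm{Ext}$ is balanced. The first isomorphism in \eqref{valand} is then the very definition of $\Exts^k_\Ok(\F,\Omega^p)$ computed via the locally free resolution \eqref{billing}. Finally, to obtain the last isomorphism I would repeat the double-complex argument verbatim with $\PM^{p,\bullet}$ replaced by $\Cu^{p,\bullet}$, invoking Malgrange's theorem for the stalkwise injectivity of $\Cu^{p,k}$ and the Dolbeault--Grothendieck lemma for the fine-resolution property.

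I do not expect any substantial obstacle: the real content has been packaged into Theorem~\ref{statut} and Proposition~\ref{reso1}, and the remainder is pure homological algebra. The one point to verify with some care is that the isomorphisms are genuinely canonical and independent of the resolution chosen in \eqref{billing} -- this is what ensures that the $\PM$- and $\Cu$-versions of \eqref{valand} refer to the same abstract invariant $\Exts^k_\Ok(\F,\Omega^p)$ -- but this canonicity is built into the functoriality of the spectral sequence (or diagram chase) and so follows automatically.
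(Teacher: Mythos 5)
Your proposal is correct and follows essentially the same route as the paper: the paper also forms the double complex $\M_{\ell,k}=\Homs_\Ok(\Ok(E_\ell),\PM^{p,k})$, uses Theorem~\ref{statut} for exactness in the $\ell$-direction and Proposition~\ref{reso1} for exactness in the $k$-direction (and Malgrange plus the local solvability of $\dbar$ for the $\Cu^{p,\bullet}$ version), and then concludes by standard cohomological algebra.
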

The novelty in \eqref{valand} is the representation of 
$\Exts_\Ok^k(\F,\Omega^p)$
by Dolbeault cohomology for the smaller sheaves of currents $\PM$.
In particular we have the decompositions
\begin{equation}\label{valand2}
\Kers_{\dbar}\Homs_\Ok(\F,\Cu^{p,k})=
\Hs^k(\Homs_\Ok(\F,\PM^{p,\bullet}),\dbar)\oplus
\dbar \Homs_\Ok(\F,\Cu^{p,k-1}).
\end{equation}
That is, each $\dbar$-closed $\mu$ in $\Homs(\F,\PM^{p,k})$ 
has a decomposition \eqref{spott} 
where $\mu_1$ is determined modulo
$\dbar\Homs(\F,\PM^{p,k-1})$ 
and $\gamma$ is in $\Homs(\F, \Cu^{p,k-1})$.

\begin{remark}
From \cite[Theorem 7.1]{Aext}, see also  
\cite[Remark 4]{Aext}, it follows that the second mapping in \eqref{valand}
is realized by 
\begin{equation*}
\xi\mapsto \xi\cdot R_k,
\end{equation*}
for  $\xi$  in $ \Homs_\Ok(\Ok(E_k),\Omega^p)$
such that  $f^*_{k+1}\xi=0$. 
\end{remark}

Let us now assume that $\F=\Ok/\J$, where $\J$ is an ideal sheaf of
pure codimension $\nu$, and let $Z$ be the associated zero set.
It is not too hard to see that $\CH_{p}^{Z}$
is precisely the sheaf of $\dbar$-closed currents in $\PM^Z_{p,\nu}$, see
e.g., \cite{Aext}.  
Taking $k=\nu$ in the last equality in \eqref{valand} we get, in view of the dimension principle, that 
$$
\Hs^k(\Homs_\Ok(\Ok/\J,\PM^{p,\bullet}),\dbar)=
\Kers_{\dbar}\Homs_\Ok(\Ok/\J,\PM^{p,\nu})=\Homs_\Ok(\Ok/\J,\CH^Z_{p}),
$$
cf., e.g., \cite[Theorem 1.5]{Aext} and ~\cite{Bj}.  Notice that  $\Homs_\Ok(\Ok/\J,\CH^Z_{p})$ is the sheaf
of Coleff-Herrera currents $\mu$ such that $\J \mu=0$. 

Let $\I\subset \Ok$ be the radical ideal associated with $Z$, i.e., the sheaf of 
functions that vanish on $Z$.  If $\mu$ is any current of bidegree $(p,\nu)$
with support on $Z$, i.e., in $\Cu^Z_{p,\nu}$,  
then locally $\J\mu=0$ if $\J=\I^m$ for sufficiently large $m$. 
Applying \eqref{valand2} to $\J=\I^m$ for $m=1,2,\ldots$, and $k=\nu$,
we get the
Dickenstein-Sessa decomposition \eqref{dsa}.

Notice that $\Homs_\Ok(\Ok/\J,\PM^{p,k})$ is the subsheaf of
$\mu$ in $\PM^{p.k}$ such that $\J\mu=0$. In particular such $\mu$
must have support on $Z$. Arguing as in the case $k=\nu$  above
we get from \eqref{valand2}
the following extension of \eqref{dsa} for general $k$. 
\begin{cor}[Generalized Dickenstein-Sessa decomposition]\label{gdsd}
 If $\mu$ is a $\dbar$-closed $(p,k)$-current with support on $Z$, then there is a decomposition
\eqref{spott}, where $\mu_1$ is in $\mathcal Ker_{\dbar}\, \PM^Z_{p,k}$,
determined modulo  $\dbar \PM^Z_{p,k-1}$, and
$\gamma$ has support on $Z$. 
\end{cor}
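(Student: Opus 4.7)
The plan is to repeat verbatim the reasoning used just above for the case $k=\nu$, this time with arbitrary Dolbeault degree $k$. Concretely, I will apply the direct-sum decomposition \eqref{valand2} to the coherent sheaf $\F=\Ok/\I^m$ with $m$ chosen large, where $\I\subset\Ok$ is the radical ideal of $Z$. The critical input, which the authors invoke in the paragraph just before the corollary, is that any current of bidegree $(p,k)$ with support on $Z$ satisfies $\I^m\mu=0$ locally for $m$ sufficiently large (this uses finiteness of order of currents on compact sets). Conversely, a current annihilated by $\I^m$ automatically has support on $Z$: outside $Z$ some generator of $\I$ is invertible, so $\mu$ vanishes there.

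For the existence part, I fix a relatively compact open set and choose $m$ so that $\I^m\mu=0$ there. Then $\mu\in\Kers_{\dbar}\Homs_\Ok(\Ok/\I^m,\Cu^{p,k})$, and \eqref{valand2} with $\F=\Ok/\I^m$ gives a decomposition $\mu=\mu_1+\dbar\gamma$, where $\mu_1\in\Kers_{\dbar}\Homs_\Ok(\Ok/\I^m,\PM^{p,k})$ and $\gamma\in\Homs_\Ok(\Ok/\I^m,\Cu^{p,k-1})$. Since $\I^m$ annihilates both summands, the annihilation--support observation gives $\mu_1\in\Kers_{\dbar}\PM^Z_{p,k}$ and $\gamma\in\Cu^Z_{p,k-1}$, yielding \eqref{spott} as asserted.

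For the indeterminacy, suppose $\mu=\mu_1+\dbar\gamma=\mu_1'+\dbar\gamma'$ are two such decompositions on a fixed relatively compact open set. Enlarging $m$ if necessary, I may assume $\I^m$ annihilates all four of $\mu_1,\mu_1',\gamma,\gamma'$ there. Then $\mu_1-\mu_1'=\dbar(\gamma'-\gamma)$ lies in $\dbar\Homs_\Ok(\Ok/\I^m,\Cu^{p,k-1})$; by the direct-sum structure in \eqref{valand2}, its image in the summand $\Hs^k(\Homs_\Ok(\Ok/\I^m,\PM^{p,\bullet}),\dbar)$ must vanish, so $\mu_1-\mu_1'=\dbar\delta$ with $\delta\in\Homs_\Ok(\Ok/\I^m,\PM^{p,k-1})\subset\PM^Z_{p,k-1}$, which is the claimed indeterminacy.

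I do not anticipate a genuine obstacle: \eqref{valand2} does the heavy lifting, and the rest is a bookkeeping argument once the $\I^m$-annihilation property is granted. The one point requiring mild care is the uniform (local) choice of $m$ in the indeterminacy step, but this is routine on a relatively compact open set where all the currents involved have finite order.
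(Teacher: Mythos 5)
Your proposal is correct and follows essentially the same route as the paper: the authors also apply the decomposition \eqref{valand2} to $\F=\Ok/\I^m$ for $m$ locally large enough that $\I^m$ annihilates the finite-order currents involved, identify $\Homs_\Ok(\Ok/\I^m,\PM^{p,k})$ with the $\I^m$-annihilated (hence $Z$-supported) pseudomeromorphic currents, and read off both existence and the indeterminacy modulo $\dbar\PM^Z_{p,k-1}$ exactly as you do. Your explicit handling of the uniform local choice of $m$ in the uniqueness step is a point the paper leaves implicit, but it is the same argument.
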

In \cite{S} Samuelsson Kalm proves a generalization of this 
decomposition, where $\PM$ are replaced by certain smaller 
subsheaves of $\PM$.

\section{Proof of Theorem ~\ref{statut}}\label{bevis}
We first consider the case when $\F=\Ok/(f)$ as in Examples~\ref{putin}
and ~\ref{putin2}.
We will provide an argument in this special case that admits an extension to 
a proof of Theorem ~\ref{statut}. 
Recall from Example ~\ref{skot} that if $\mu$ is in $\W$,  then 
$f(1/f)\mu=\1_{\{f\neq 0\}}\mu=\mu$.  Notice that also $f \dbar \big((1/f)\mu \big)=
\dbar\big(f(1/f)\mu\big)=\dbar\mu$. 
In view of \eqref{bkopp}  and Proposition~\ref{pelargonia3} (and the
dimension principle if $\mu$ is $(*,0)$), 
an arbitrary \pmm current with compact support can be written 
$$
\mu=\mu_1+\dbar\mu_2,
$$
where $\mu_j$ are in $\W$.  If 
$$
\nu=\frac{1}{f}\mu_1+\dbar\big(\frac{1}{f}\mu_2\big),
$$
thus  $f\nu=\mu$.

\smallskip
We now turn our attention to a general locally free resolution
\eqref{billing}. 
If $k\ge 1$, $\mu\in \Homs_\Ok(\Ok(E_k),\PM)$, and $f_{k+1}^*\mu=0$ in a \nbh of a point $x$, we must then find a \pmm current $\nu\in\Homs_\Ok(\Ok(E_{k-1}),\PM)$
in a \nbh of $x$ such that $f_k^*\nu=\mu$.

With no loss of generality we may assume that $\mu$ has bidegree $(n,*)$,
cf., \cite[Theorem 3.5]{AW3}, 
and compact support in an open ball $\U\subset \C^n$ with center $x$,
and that $f^*\mu=0$.
We will construct integral operators $\Ak$ and $\F$ for such $\mu$
such that 
\begin{equation}\label{sport}
\mu=f^*\Ak\mu +\F\mu
\end{equation}
and  $\Ak\mu$ and $\F\mu$ are pseudomeromorphic. If $\F\mu=0$, then $\nu=\Ak\mu$ thus solves our problem.  This is in fact the case
if the support of $\mu$ is discrete.
In general, unfortunately $\F\mu$ does
not vanish, or at least we cannot prove it. However, we can prove that
$\F\mu$ has substantially "smaller" support than $\mu$, see Lemma ~\ref{formel} below.
In particular, $\supp (\F\mu)\subset \supp \mu$.
Since $f^*\mu=0$, \eqref{sport} implies that $f^*\F\mu=0$. 
Therefore we can apply \eqref{sport} to
$\F\mu$, and then  
$$
\mu=f^*\big(\Ak\mu+\Ak \F\mu\big) + \F^2\mu.
$$
Again $f^*\F^2\mu =0$ so we can iterate and in view of 
Lemma~\ref{borg} below we
obtain a solution $\nu=\Ak(\mu+ \F\mu + \F^2\mu + \cdots)$ to $f^*\nu=\mu$ after a finite  number of steps. Thus Theorem ~\ref{statut} follows.
It thus remains to construct integral operators $\Ak$ and $\F$
with the desired properties. 
%

\subsection{The integral operators $\Ak$ and $\F$ in $\U$}
Let us recall some facts from \cite[Section~9]{Aint1} about integral representation in $\U$.   
 Let $F\to \U$ be a holomorphic vector bundle and
assume that 
$g=g_{0,0}+\cdots +g_{n,n}$ is a smooth form in $\U_\zeta\times\U_z$,
where lower indices denote bidegree,  such that 
$g$ takes values in $\Hom(F_\zeta,F_z)$ at the point $(\zeta,z)$.
We will also assume that $g$ has no
holomorphic 
differentials\footnote{We are only interested here in integral formulas for forms of bidegree $(0,*)$ and therefore we can take $d\zeta_j$ instead of $d\eta_j$ in \cite{Aint1}.}
with respect to $z$. Let $\delta_\zeta$ denote interior multiplication with the vector field 
$$
2\pi i\sum_1^n \zeta_j\frac{\partial}{\partial\zeta_j}
$$
and let $\nabla_\zeta=\delta_\zeta-\dbar$.
We say that $g$ is a {\it weight} (with respect to $F$) if
$\nabla_\zeta g=0$ and if in addition $g_{0,0}=I_F$, the identity mapping on $F$,
on the diagonal in $\U\times\U$.

From now on we only consider the components of the form $B$ from Section ~\ref{buss}
above
with no holomorphic differentials with respect to $z$. For simplicity we denote
it by $B$ as well.  Let $g$ be a weight with respect to $F$.  For test forms  $\phi(\zeta)$
of bidegree $(0,*)$ in $\U$ with values in $F$ we have the Koppelman formula 
\begin{equation}\label{koppvikt}
\phi(z)=\dbar \int_ \zeta (g\w B)_{n,n-1}\w\phi + 
\int_\zeta (g\w B)_{n,n-1}\w \dbar \phi+\int_\zeta g_{n,n}\w\phi, \quad z\in\U.
\end{equation}
The case when $F$ is the trivial line bundle, is proved in
\cite[Section~9]{Aint1} and the general case is verified in exactly the same way.

\smallskip 
Consider now our (locally) free resolution \eqref{billing} in $\U$,  
choose Hermitian metrics on the vector bundles $E_k$, and let $U_\epsilon$ and $R_\epsilon$ be the associated currents as in Section~\ref{pyton} above.
Let $H$ be a Hefer morphism with respect to $E$ that is holomorphic in both $\zeta$ and $z$.
See, e.g., \cite[Section 5]{AW1} for the definition
and basic properties of Hefer morphisms; in particular $H$ is an $\End
E$-valued holomorphic form. 
Then
$$
g_\epsilon:=f(z)HU_\epsilon+HU_\epsilon f+ HR_\epsilon
$$
is a smooth weight with respect to $E$. Here $f, U_\epsilon,
R_\epsilon$ stands for $f(\zeta), U_\epsilon(\zeta), R_\epsilon(\zeta)$.    
Let $g_\epsilon^k$ be the component of $g_\epsilon$ that is a weight
with respect to $E_k$.  For test forms $\phi$ 
of bidegree $(0,*)$ with values in $E_k$ we have then, in view of \eqref{koppvikt},  the representation
\begin{equation}\label{appar}
\phi(z)=\dbar \int_\zeta (g^k_\epsilon\w B)_{n,n-1}\w\phi+ \int_\zeta (g^k_\epsilon\w B)_{n,n-1}\w\dbar\phi+
\int_\zeta (g^k_\epsilon)_{n,n}\w\phi.
\end{equation}
By the way, the last term vanishes unless $\phi$ has bidegree $(0,0)$,
since $g^k_\epsilon$ contains no anti-holomorpic differentials with respect
to $z$ so that $(g^k_\epsilon)_{n,n}$ must have bidegree $(n,n)$ with respect to $\zeta$.

Let $R^k$ and $R^k_\epsilon$ 
be the components of $R$ and $R_\epsilon$, respectively, that take values in $\Hom(E_k,E_*)$, and define $U^k$ and $U^k_\epsilon$ analogously.  Let $H^k$ be the component of $H$ that takes values in $\Hom(E_*,E_k)$.  
Then
\begin{equation}\label{pulka}
g^k_\epsilon=  f_{k+1}(z) H^{k+1}U^k_\epsilon +H^{k}U^{k-1}_\epsilon f_{k} + H^k R^k_\epsilon.
\end{equation}

Now assume that $\mu$ is a \pmm  $(n,q)$-current with compact support
in $\U$ and taking  values in $E^*_k$ for $k\geq 1$. Integrating $\mu$ against \eqref{appar} for test forms $\phi$
with values in $E_k$ we get
\begin{equation*}
\mu(\zeta)=\int_z (g^k_\epsilon\w B)^*_{n,n-1}\w\dbar\mu+ \dbar \int_z (g^k_\epsilon\w B)^*_{n,n-1}\w\mu+
\int_z (g^k_\epsilon)^*_{n,n}\w\mu
\end{equation*}
(up to signs). 
Assuming that $f^*_{k+1}\mu=0$ and plugging in \eqref{pulka} we get
\begin{multline}\label{kaka1}
\mu(\zeta)=
f_k^*(\zeta) \int_z (H^kU^{k-1}_\epsilon B)^*_{n,n-1}\w\dbar\mu+
\dbar \Big(f^*_k(\zeta)\w\int_z (H^kU^{k-1}_\epsilon
B)^*_{n,n-1}\w\mu\Big)+ \\
f_k^*(\zeta)\int_z (H^kU^{k-1}_\epsilon)^*_{n,n}\w\mu+\int_z(H^kR^k_\epsilon\w B)_{n,n-1}^*\w\dbar\mu 
\\  
\dbar \int_z(H^kR^k_\epsilon)\w B)_{n,n-1}^*\w\mu + 
\int_z(H^kR^k_\epsilon)^*_{n,n}\w \mu.
 \end{multline}
To simplify notation we now suppress the lower indices, and instead tacitly understand that we only consider products
of terms such that the total bidegrees add up to the desired one. We can then 
write \eqref{kaka1} more suggestively as
\begin{multline}\label{kaka2}
\mu(\zeta)=f_k^*(\zeta) (U^{k-1}_\epsilon)^*(\zeta)\int_z (H^k)^* \w B \w\dbar\mu+ 
\dbar \Big( f^*_k(\zeta)   \w (U^{k-1}_\epsilon)^*(\zeta)\int_z
(H^k)^*\w B   \w\mu\Big)+ \\
f_k^*(\zeta)(U_\epsilon^{k-1})^*(\zeta)\int_z (H^k)^*\w\mu + (R^k_\epsilon)^*(\zeta)  \int_z(H^k)^*\w B \w\dbar\mu+\\
\dbar\Big(  (R^k_\epsilon)^*(\zeta)  \int_z(H^k)^*\w B \w\mu\Big)
+(R^k_\epsilon)^* (\zeta)\int_z(H^k)^*\w \mu.
 \end{multline}
Since $f^*_k$ and $\dbar$ have odd order with respect to the 
superstructure, cf., Section~\ref{pyton},  they anti-commute
and thus we can we can write \eqref{kaka2} as 
\begin{equation*}%
\mu(\zeta)= f_k^*(\zeta)\Ak_\epsilon\mu(\zeta)+\F_\epsilon\mu(\zeta),
\end{equation*}
where
\begin{multline*}
\Ak_\epsilon\mu=  (U^{k-1}_\epsilon)^*(\zeta)\int_z (H^k)^* \w B \w\dbar\mu-\\
\dbar \Big( (U^{k-1}_\epsilon)^*(\zeta)\int_z (H^k)^*\w B   \w\mu\Big)+
 (U_\epsilon^{k-1})^*(\zeta)\int_z (H^k)^*\w\mu
\end{multline*}
and
\begin{multline*}
\F_\epsilon\mu(\zeta)= (R^k_\epsilon)^*(\zeta) \int_z  (H^k)^*\w B
\w\dbar\mu+\\
\dbar\Big( (R^k_\epsilon)^*(\zeta) \int_z  (H^k)^*\w B \w\mu\Big) +  
(R^k_\epsilon)^* (\zeta)\int_z(H^k)^*\w \mu.
\end{multline*}

\begin{lma}  
Each term in $\Ak_\epsilon\mu$ and $\F_\epsilon\mu$ tends to a
pseudomeromorphic current when $\epsilon\to
  0$.  
\end{lma}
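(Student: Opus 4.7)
The plan is to write each of the six terms of $\Ak_\epsilon\mu$ and $\F_\epsilon\mu$ in the common shape (possibly preceded by an outer $\dbar$) $S_\epsilon(\zeta) \wedge M(\zeta)$, where $S_\epsilon(\zeta) \in \{(U^{k-1}_\epsilon)^*(\zeta), (R^k_\epsilon)^*(\zeta)\}$ is a smooth form on $\C^n_\zeta$ and
\begin{equation*}
M(\zeta) \;=\; \pi_*\bigl((H^k)^*(\zeta,z) \w \Phi(\zeta,z) \w \mu'(z)\otimes 1(\zeta)\bigr),
\end{equation*}
with $\pi\colon \C^n_\zeta \times \C^n_z \to \C^n_\zeta$ the natural projection, $\Phi \in \{1, B\}$, and $\mu' \in \{\mu, \dbar\mu\}$. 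Since $\PM$ is closed under $\dbar$ (Section~\ref{basal}) and $\dbar$ commutes with distributional limits, the outer $\dbar$ in the two relevant terms is harmless as soon as the inner product converges in $\PM$; it therefore suffices to treat the six undifferentiated products.

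The first step is to show $M \in \W(\C^n_\zeta)$ and has compact support. Since $\mu$ and $\dbar\mu$ are \pmm\ with compact support, $\mu' \otimes 1 \in \PM_{\C^n_\zeta \times \C^n_z}$ by Lemma~\ref{1tensor}; wedging with the smooth (indeed holomorphic) form $(H^k)^*$ preserves $\PM$; and when $\Phi=B$, Example~\ref{bex} gives $B \in \ASM(\C^n_\zeta \times \C^n_z)$, so Theorem~\ref{hittills} ensures $(H^k)^*\w B\w(\mu'\otimes 1)$ is \pmm. Applying $\pi_*$ preserves $\PM$ by Lemma~\ref{grus1}, and the resulting current lies in $\W$ by Proposition~\ref{pelargonia4}.

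The $\Ak_\epsilon$ terms are then handled as follows. Writing $(U^{k-1}_\epsilon)^*(\zeta) = \chi_\epsilon(\zeta)\,(U^{k-1})^*(\zeta)$ and noting that $(U^{k-1})^*\in \ASM(\C^n_\zeta)$ while $M\in\PM$ with compact support, Theorem~\ref{hittills} produces a well-defined \pmm\ current $(U^{k-1})^*\w M$, and formula \eqref{asm4} identifies it as $\lim_{\epsilon\to 0}\chi_\epsilon(\zeta)(U^{k-1})^*(\zeta)\w M(\zeta)$. For the $\F_\epsilon$ terms, the condition $k\ge 1$ removes the $I_E$-contribution in \eqref{reps}, so $(R^k_\epsilon)^*(\zeta)=\dbar\chi_\epsilon(\zeta)\w (U^k)^*(\zeta)$, and \eqref{dagis} gives
\begin{equation*}
\lim_{\epsilon\to 0}(R^k_\epsilon)^*(\zeta)\w M(\zeta) \;=\; r\bigl((U^k)^*\bigr)(\zeta)\w M(\zeta),
\end{equation*}
which is \pmm\ by Definition~\ref{bounce}.

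The main technical point requiring care is that the cutoffs $\chi_\epsilon$ used to define $U_\epsilon,R_\epsilon$ are built from a tuple $h$ with $Z(h)=ZSS(U)$, whereas \eqref{asm4} and \eqref{dagis} are naturally stated with $Z(h)$ equal to the Zariski singular support of the individual component $(U^{k-1})^*$ or $(U^k)^*$. Since these singular supports are contained in $ZSS(U)$ and $M$ has the SEP, Lemma~\ref{3apsko} combined with \eqref{pelargonia} shows that both choices of cutoff yield the same limit, so no ambiguity arises beyond the canonical current supplied by Theorem~\ref{hittills}.
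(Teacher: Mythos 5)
Your argument is essentially the paper's own proof: identify each $z$-integral as a current in $\W$ via Proposition~\ref{pelargonia4}, obtain the limits of the $\Ak_\epsilon$-terms from Theorem~\ref{hittills} together with \eqref{asm4}, obtain the limits of the $\F_\epsilon$-terms as residues via \eqref{reps} and \eqref{dagis}, and use that $\dbar$ preserves pseudomeromorphicity for the terms carrying an outer $\dbar$. Two small inaccuracies, neither fatal: first, your current $M$ does not have compact support in $\zeta$ (only $\mu$ does), but Theorem~\ref{hittills} does not require this, so nothing is lost; second, the identity contribution in \eqref{reps} is \emph{not} removed by $k\ge 1$ --- $(R^k_\epsilon)^*$ contains $(1-\chi_\epsilon)I_{E_k}^*$ for every $k$ --- the correct reason it is harmless is that $(1-\chi_\epsilon)M\to \1_{Z(h)}M=0$ by Lemma~\ref{3apsko}, since $M\in\W$ and $Z(h)$ has positive codimension (and in any case $\1_{Z(h)}M$ would be pseudomeromorphic, so the conclusion of the lemma is unaffected). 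The cutoff-discrepancy issue you settle via the SEP is handled in the paper by noting $ZSS(U)=ZSS(U_k)$; both resolutions are fine.
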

We denote the limits of $\Ak_\epsilon\mu$ and $\F_\epsilon\mu$ by
$\Ak\mu$ and $\F\mu$, respectively.

\begin{proof}  In view  Proposition~\ref{pelargonia4},
$$
\gamma:=\int_z (H^k)^* \w B \w\dbar\mu 
$$
is in $\W(\U)$, since $B$ is almost semi-meromorphic and $H$ is smooth.   Since $U$ is almost semi-meromorphic, by  Theorem~\ref{hittills}  we can 
form the \pmm current
$T=(U^{k-1})^*\w\gamma$, which is in $\W(\U)$ in view of Proposition~\ref{koko} (with $Z=X=\U$). 
Since $U^{k-1}_\epsilon=\chi(|h|^2/\epsilon)U^{k-1}$, where
$Z(h)=ZSS(U)=ZSS(U_k)$, cf.\ \cite[Section~2]{AW1}, 
it follows that 
$(U^{k-1}_\epsilon)^*(\zeta)\w \gamma\to T$, cf.\ \eqref{asm4}. Thus the first term in $\Ak_\epsilon\mu$ tends to a \pmm current
in $\U$. 
Moreover, from the definition \eqref{reps} for  $R_\epsilon$ it follows that the limit
of the first term in $\F_\epsilon$ equals $r(U^k)\w
\gamma=R^k\wedge \gamma$, cf., \eqref{dagis}. 

Since $\dbar$ preserves pseudomorphicity, the same argument works
 for the other terms in $\Ak_\epsilon\mu$ and $\F_\epsilon$.  
\end{proof}

Recall that since \eqref{billing} is exact the current $R^k$ vanishes when
$k\ge 1$, cf., Section~\ref{pyton}. 
Unfortunately, from this 
we cannot conclude that
 the limit $\F\mu$ vanishes in general; cf., 
\cite[Example 4.23]{AW3}.
However, as we now shall see, the support of $\F\mu$ is small in the
following sense:

\begin{lma}\label{formel}

\noindent
$(i)$\ 
The support of $\F\mu$ is contained in the support of $\mu$. 
\smallskip 

\noindent $(ii)$\
Assume that $\mu$ has compact support on a submanifold $Z\subset \V$ of codimension $\geq p$, where $\V$ is an open
subset of $\U$. Then there is a 
\cqa\ $V\subset Z$ of codimension $\geq p+1$ such that $\supp
(\F\mu)\subset V$. 
\end{lma}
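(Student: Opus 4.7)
The plan is to prove (i) by a direct localization using off-diagonal smoothness of the Bochner--Martinelli type kernel $B$, and then to derive (ii) by combining (i) with Corollary~\ref{nusa} and the fact that $R^k$ annihilates $\W$-currents when $k\geq 1$.

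For (i), I would fix $\zeta_0\notin\supp\mu$ and a neighborhood $W$ of $\zeta_0$ with $\overline{W}\cap\supp\mu=\emptyset$. For $\zeta\in W$ and $z\in\supp\mu$ the points are separated, so $B(\zeta,z)$ is smooth there; consequently each of the three integrals $\int_z(H^k)^*\w B\w\dbar\mu$, $\int_z(H^k)^*\w B\w\mu$, and $\int_z(H^k)^*\w\mu$ restricts to a smooth form in $\zeta$ on $W$. Since $R^k_\epsilon$ is smooth and tends weakly to $R^k=0$ (cf.\ Section~\ref{pyton}, because \eqref{billing} is exact), each product $(R^k_\epsilon)^*\w(\text{smooth form})$ tends weakly to zero on $W$; continuity of $\dbar$ handles the middle term. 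Thus $\F\mu|_W=0$, proving (i).

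For (ii), I would first apply Corollary~\ref{nusa} to obtain a \cqa\ $V\subset Z\cap\V$ of codimension $\geq p+1$, together with a factorization $\mu=\alpha\w\tilde\mu$ on $\V\setminus V$, where $\alpha$ is smooth on $X\setminus V$ and $\tilde\mu$ is pseudomeromorphic of bidegree $(0,p)$ with compact support in $Z\cap\V$. By (i), $\supp\F\mu\subset\supp\mu\subset\V$, so it suffices to verify that $\F\mu$ vanishes on $\V\setminus V$. Localize at $\zeta_0\in\V\setminus V$, fix a neighborhood $W_0\ni\zeta_0$ with $\overline{W_0}\cap V=\emptyset$, and split $\tilde\mu=\chi\tilde\mu+(1-\chi)\tilde\mu$ via a cutoff $\chi$ supported near $V$. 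The $\chi\tilde\mu$-contribution to each integral is smooth on $W_0$ (since $B$ is smooth off the diagonal and $W_0,\supp\chi$ are separated). For the $(1-\chi)\tilde\mu$-part, the form $A:=B\w(H^k)^*\w\alpha$ is almost semi-meromorphic on the relevant domain, and Proposition~\ref{pelargonia4} yields that the pushforward $\pi_*(A\w(1-\chi)\tilde\mu\otimes 1)$ lies in $\W$. Hence each of the three integrals in $\F\mu$, restricted to $W_0$, belongs to $\W(W_0)$.

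The punch-line is that $(R^k)^*\w\gamma=0$ for any $\gamma\in\W(W_0)$. Indeed, $R^k=r(U^k)=0$ for $k\geq 1$ implies that $\dbar U^k$ is itself almost semi-meromorphic, and Definition~\ref{bounce} combined with \eqref{pelargonia} gives
\[
(R^k)^*\w\gamma=\1_{ZSS(U^k)}\bigl(\dbar U^k\w\gamma\bigr)=\dbar U^k\w\1_{ZSS(U^k)}\gamma=0,
\]
because $\gamma$ has the SEP and $ZSS(U^k)$ has positive codimension in $\V$. Applied to each of the three terms in $\F\mu$ (with continuity of $\dbar$ for the middle term), this yields $\F\mu|_{W_0}=0$, and varying $\zeta_0$ gives $\supp\F\mu\subset V$. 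The main technical obstacle is justifying the interchange of the defining $\epsilon\to 0$ limits with localization and the $\1_{ZSS(U^k)}$-manipulation, in particular ensuring that the two pieces of the $\chi$-splitting match up so that each integral is genuinely a $\W$-current on $W_0$; once this is in place, the SEP closes the argument.
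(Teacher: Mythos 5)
Your part (i) is fine and is essentially the paper's argument: off the diagonal the kernel is smooth near the point in question, and $(R^k_\epsilon)^*$ times a smooth form tends to $(R^k)^*$ times that form, which vanishes since $R^k=0$.

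Part (ii), however, rests on a claim that is false: it is \emph{not} true that $(R^k)^*\w\gamma=0$ for every $\gamma\in\W$. The operation $\gamma\mapsto r(a)\w\gamma$ of Definition~\ref{bounce} is not multiplication by the current $r(a)$: the expression $\dbar a\w\gamma$ in its definition is the Leibniz product \eqref{konc}, which need not coincide with the Theorem~\ref{hittills} product of the almost semi-meromorphic current $\1_{X\setminus ZSS(a)}\dbar a$ with $\gamma$; the discrepancy is supported on $ZSS(a)$ and is exactly $r(a)\w\gamma$, so your chain of equalities is circular precisely at the step where you replace the Leibniz product by the $\ASM$ product and invoke \eqref{pelargonia}. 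Concretely, in $\C^2$ take $a=b$ as in Examples~\ref{bex} and \ref{bex2} and $\gamma=\dbar b$, which is almost semi-meromorphic and hence in $\W$. Then $r(b)=0$ by the dimension principle, $\dbar\gamma=0$, and $r(b)\w\gamma=\1_{\{0\}}\dbar(b\w\dbar b)=\1_{\{0\}}[0]=[0]\neq 0$, whereas the $\ASM$ product $(\dbar b)\w(\dbar b)$ vanishes (it vanishes outside the origin). Note also that if your claim were correct, then, since Proposition~\ref{pelargonia4} already puts $\int_z(H^k)^*\w B\w\dbar\mu$ and $\int_z(H^k)^*\w B\w\mu$ in $\W(\U)$ for \emph{any} pseudomeromorphic $\mu$ with compact support (no cutoff splitting and no Corollary~\ref{nusa} needed), you would conclude $\F\mu=0$ identically, contradicting the paper's explicit remark (citing \cite[Example 4.23]{AW3}) that the vanishing of $R^k$ does not yield $\F\mu=0$ in general; the fact that your factorization $\mu=\alpha\w\tilde\mu$ plays no essential role is a symptom of this. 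The paper's proof of (ii) instead writes $\F\mu=\T\dbar\mu+\dbar(\T\mu)$ and proves the vanishing of $\T$ on $\V\setminus V$ before integrating, by a bidegree/codimension count on $\V\times\V$ (Lemma~\ref{surpuppa}): for $\mu=\alpha\w\tilde\mu$ with $\tilde\mu$ of bidegree $(*,p)$ supported on $Z$, the limit current is supported on $(\V\times Z)\cap\Delta$, of codimension $\geq n+p$, while its antiholomorphic degree is only $n-1+p$, so it vanishes by the dimension principle. Your localization and cutoff scheme (and your part (i)) otherwise parallel the paper, but the crucial vanishing step must use the bidegree $(*,p)$ information from Corollary~\ref{nusa}, which your argument never does.
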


\begin{proof}
First notice that $(R^k_\epsilon)^*(\zeta)(H^k)^*\wedge \mu$ is a smooth form
times the tensor product of $(R^k_\epsilon)^*$ and $\mu$. It follows
that the last term in 
the definition of $\F_\epsilon\mu$ 
tends to $0$, since 
$R^k=0$. We thus have to deal with the first two terms.

\smallskip 

To prove $(i)$ we note that if $\mu=0$ close to $x\in \U$, then 
$$
\int_z(H^k)^*\wedge B\wedge \dbar \mu
$$ 
is smooth close to $x$, since $B$ is
smooth outside the diagonal in $\U\times \U$. Thus, close to $x$, the
first term in $\F_\epsilon\mu$ tends to $(R^k)^*$ times a
smooth form and thus the limit vanishes since $R^k=0$. The second term in
$\F_\epsilon\mu$ tends to $0$ for the same reason. 

\smallskip 

To prove $(ii)$, let us consider the limit 
\begin{equation}\label{apa3}
T\mu=\lim_{\epsilon\to 0} (R^k_\epsilon)^*(\zeta) \w (H^k)^*\w B\w\mu, 
\end{equation}
where, as before, we use the 
 simplified notation and in fact only take into account terms of  
$
(R^k_\epsilon)^*(\zeta) (H^k)^*\w B
$
of total bidegree $(n,n-1)$.   
Note that $T\mu$ is the product of a residue
of an almost semimeromorphic current $(R^k)^*$ and a pseudomeromorphic current,
cf.\ Definition ~\ref{bounce}, and thus is pseudomeromorphic. 
Let 
$$
\T\mu=\int_z T\mu.  
$$
Then 
$$
\F\mu=\T\dbar\mu +\dbar (\T\mu).
$$
Thus it is enough to prove (ii) for $\T\mu$ instead of $\F\mu$.

\begin{lma} \label{surpuppa} 
Assume that $\mu$ has compact support on a 
subvariety $W\subset \V$ of codimension $p$ and  
\begin{equation}\label{ballong}
\mu=\alpha\wedge\tilde\mu,
\end{equation}
where $\alpha$ is smooth and $\tilde \mu$ has support on $W$ and  bidegree
$(*,p)$. Then $\T\mu=0$. 
\end{lma}

\begin{proof}
Notice, in view of the proof of $(i)$ above, that $(i)$ holds for $\T$ instead of $\F$. 
Therefore suffices to show that $\T\mu=0$ in $\V$. 
Outside the diagonal in $\V\times \V$, the current $B$ is smooth, and hence $T\mu$ vanishes, as it is a smooth form times the tensor product
of  $(R^k)^*$ and $\mu$, and $R^k=0$.  If $\mu$ is of the form \eqref{ballong}, therefore \eqref{apa3} 
is a smooth form
$\alpha$ times a \pmm current with support on $(\V\times W)\cap \Delta$
that is a subvariety of $\V\times\V$ of codimension $\ge n+ p$. On the other
hand the antiholomorphic degree is $n-1+p$. Thus $T\mu$  must
vanish in view of the dimension principle. It follows that $\T\mu$ vanishes. 
\end{proof}


We can now conclude the proof of $(ii)$ for $\T$. 
We can cover $\V$ by finitely many
neighborhoods $\V_j$ such that $\V_j$ and $Z\cap\V_j$ are as in 
Proposition ~\ref{struktur}. Moreover we can find smooth cutoff
functions $\chi_j$ 
with support in $\V_j$ such that $\mu=\sum_j\chi_j\mu$. 
Then by Corollary ~\ref{nusa} there are 
\cqa s $V_j\subset \V_j\cap Z$ of codimension $\geq p+1$ such that $\chi_j\mu$ is of the form
\eqref{ballong} in $\V_j\setminus V_j$. 

Fix $j$, pick $x\in Z\setminus V_j$, let $\W\subset \V_j\setminus V_j$ be a
neighborhood of $x$, and let $\chi$ be a cutoff function with compact
support in $\W$ that is $1$ in a neighborhood of $x$. Then $\chi\chi_j\mu$
is of the form \eqref{ballong} and thus $\T(\chi\chi_j\mu)=0$ by
Lemma ~\ref{surpuppa}. Next, since $(1-\chi)\chi_j\mu=0$ in $\W$, $(i)$ implies that
$\T\big((1-\chi)\chi_j\mu\big)=0$ in $\W$. Since $\T$ is linear,  
\[
\T(\chi_j\mu)=\T(\chi\chi_j\mu)+\T\big( (1-\chi)\chi_j\mu\big ) =0
\]
in $\W$. Since $x$ was arbitrary we conclude that $\supp (\T(\chi_j\mu))\subset
V_j$. 
Now the finite union  $V=\cup_j V_j$ is a cqa of codimension $\leq d$ is a \cqa\ of dimension $\leq d$ and $\supp (\T\mu)\subset V$. 
\end{proof}

\begin{lma}\label{borg}
Given $m\in\N$, there is a constant $c_m$ such that if $\mu$ is a
pseudomeromorphic current with support on a \cqa\ of dimension
$\leq m$, then $\F^j\mu$
vanishes if $j\ge c_m$.
\end{lma}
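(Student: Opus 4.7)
The plan is to prove the lemma by strong induction on $m$. For $m \leq -1$ the cqa is empty, so $\mu = 0$ and one may take $c_{-1} = 0$.

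For the inductive step, iterate Lemma \ref{caput2} starting from $V$, producing a descending chain
\[
V = V^{(0)} \supset V^{(1)} \supset \cdots \supset V^{(m+1)} = \emptyset,
\]
where $V^{(i)}$ is the ``$V'$'' of Lemma \ref{caput2} applied to $V^{(i-1)}$, so that $\dim V^{(i)} \leq m - i$ and each point of $V^{(i-1)} \setminus V^{(i)}$ has a neighborhood in which $V^{(i-1)}$ is covered by finitely many submanifolds of dimension $\leq m - i + 1$. The termination of this chain after $m+2$ levels will produce the finite bound.

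Set $\nu_0 := \mu$ and inductively $\nu_k := \mathcal{F}^{c_{m-1}+1}\nu_{k-1}$. The key claim to establish is that $\operatorname{supp}\nu_k$ is contained in a neighborhood of $V^{(k)} \cap \operatorname{supp}\mu$. Assuming this for $k-1$, one covers $\operatorname{supp}\nu_{k-1}$ by a neighborhood $\mathcal{U}_0$ of $V^{(k)} \cap \operatorname{supp}\nu_{k-1}$ together with finitely many open sets $\mathcal{U}_1, \ldots, \mathcal{U}_N$ chosen so that the portion of $\operatorname{supp}\nu_{k-1}$ lying in each $\mathcal{U}_j$ is contained in a single submanifold of codimension $\geq n - m$; a subordinate partition of unity $\chi_0 + \sum_{j\ge 1}\chi_j = 1$ then splits $\nu_{k-1}$ accordingly. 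For $j\geq 1$, each $\chi_j\nu_{k-1}$ has compact support on a submanifold of codimension $\geq n-m$, so by Lemma \ref{formel}(ii), $\mathcal{F}(\chi_j\nu_{k-1})$ has support on a cqa of dimension $\leq m-1$, which the inductive hypothesis then annihilates after a further $c_{m-1}$ applications of $\mathcal{F}$. Hence $\nu_k = \mathcal{F}^{c_{m-1}+1}(\chi_0 \nu_{k-1})$, and by Lemma \ref{formel}(i) its support lies in $\operatorname{supp}\chi_0$, a neighborhood of $V^{(k)}$. Taking $k = m+1$, the inclusion $\operatorname{supp}\nu_{m+1} \subset \operatorname{supp}\chi_0^{(m)}$ is a neighborhood of $V^{(m+1)} = \emptyset$, forcing $\nu_{m+1} = 0$. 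This yields $c_m := (m+1)(c_{m-1}+1)$.

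The hard part is justifying the submanifold cover at step $k$: one needs the points of $\operatorname{supp}\nu_{k-1}$ lying outside $\mathcal{U}_0$ to be confined locally to a single submanifold of codimension $\geq n-m$. Since $\operatorname{supp}\nu_{k-1}$ may meet intermediate strata $V^{(i)} \setminus V^{(i+1)}$ for $0 < i < k$, where Lemma \ref{caput2} only gives submanifold structure for $V^{(i)}$ rather than for $V$ itself, the cover must be arranged delicately, exploiting the fact (to be established inductively, perhaps by strengthening the claim to $\operatorname{supp}\nu_k \subset V^{(k)}$) that the residual support has effectively descended onto the stratum $V^{(k)}$ rather than lingering in the ambient $V$. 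This bookkeeping, combined with the finiteness of the chain $V \supset V^{(1)} \supset \cdots \supset \emptyset$, is what powers the termination after $m+1$ rounds.
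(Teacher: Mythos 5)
Your overall architecture---induction on the dimension, with Lemma \ref{caput2} supplying the exceptional set and blocks of $\F^{c_{m-1}+1}$ pushing the support down one level---is essentially the paper's; in fact the paper needs only one descent (it shows $\supp \F^{c_\ell+1}\mu\subset V'$ and then invokes the inductive hypothesis, giving $c_{\ell+1}=2c_\ell+1$), so your chain of $m+1$ descents is redundant, though harmless. The genuine gap is in the descent step itself, precisely the point you flag as ``the hard part'' and leave unresolved. Lemma \ref{caput2} gives, near a point $x\in V\setminus V'$, only a \emph{finite union} $W=\cup_j W_j$ of submanifolds containing $V$; at a point where two of the $W_j$ cross, no single submanifold of the right dimension contains the support locally, no matter how the neighborhoods are shrunk, so a cover by sets $\U_1,\dots,\U_N$ in which the support lies in a single submanifold cannot in general be arranged, and Lemma \ref{formel}\,(ii) does not apply to $\chi_j\nu_{k-1}$ as you use it. The missing device is the decomposition by restriction currents: write $\chi\mu=\sum_j\chi\,\1_{W_j}\mu+\nu$, where each $\1_{W_j}\mu$ is pseudomeromorphic with support on the single submanifold $W_j$ (so Lemma \ref{formel}\,(ii) applies to it), while the remainder $\nu$ is supported on the singular/crossing locus of $W$, a \cqa\ of one lower dimension (Example \ref{vanlig}); by Lemma \ref{formel}\,(i) its image under $\F$ stays there, so after a single application of $\F$ everything sits on a \cqa\ of dimension $\leq m-1$ and the inductive hypothesis finishes.

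A second, related defect is the form of your descent claim. ``$\supp\nu_k$ is contained in a neighborhood of $V^{(k)}\cap\supp\mu$'' is vacuous as stated (every set lies in some neighborhood), and the endgame at $k=m+1$ needs the neighborhood of $\emptyset$ to be empty, which does not follow. Your proposed strengthening $\supp\nu_k\subset V^{(k)}$ is also not what the argument would deliver: the \cqa s produced by Lemma \ref{formel}\,(ii) lie in the $W_j$, and the crossing locus lies in $W$, and neither need be contained in $V^{(k)}$. The paper's way around this is to prove \emph{local} vanishing: for each fixed $x\in V\setminus V'$, choose a cutoff $\chi$ near $x$, kill $\F^{c_\ell+1}\big((1-\chi)\mu\big)$ near $x$ by Lemma \ref{formel}\,(i), and kill $\F^{c_\ell}\big(\F(\chi\mu)\big)$ globally by the inductive hypothesis applied to the compact \cqa\ of dimension $\leq\ell$ carrying $\F(\chi\mu)$---its location being irrelevant. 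Combined with $\supp\F^{j}\mu\subset\supp\mu\subset V$, this yields $\supp\F^{c_\ell+1}\mu\subset V'$, which is the statement your bookkeeping is aiming for but does not establish.
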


In fact, it follows from the proof below that we can choose $c_m$ as
$2^{m+1}-1$.

\begin{proof}
First assume that $m=0$. By Example ~\ref{nolldim}, a \cqa\ of
dimension $0$ is a
variety of dimension $0$, and thus $\F\mu$ vanishes by Lemma ~\ref{formel} $(ii)$. 
It follows that the lemma holds in this case with $c_0=1$. 

Now assume that the lemma holds for $m=\ell$. Moreover, assume that
$\mu$ is a pseudomeromorphic current with support on a \cqa\ $V\subset
\U$ of
dimension $\ell+1$. Let $V'\subset V$ be a \cqa\ of
dimension $\leq \ell$ as in Lemma ~\ref{caput2}. 
We claim that $\F^{c_\ell+1}\mu$ has support on $V'$. Taking this for
granted we get that 
\[
\F^{c_\ell}(\F^{c_\ell+1}\mu)=0,
\]
by the induction hypothesis.
Thus the lemma holds for $m=\ell+1$ with
$c_{\ell+1}=2c_\ell+1$, and hence by induction for all $m$.

It remains to prove the claim. Take $x\in V\setminus V'$,  
let $\V\subset \U$ be a neighborhood of $x$ as in Lemma ~\ref{caput2}, so that
$V\cap \V\subset W=\cup W_j$, where the $W_j\subset \V$ are
submanifolds of dimension $\leq \ell+1$, and let $\chi$ be a cutoff
function with compact support in $\V$ that is $1$ in a neighborhood
$\tilde\V$ of
$x$. 
Let $\mu_j=\1_{W_j}\mu$. Then 
\[
\chi\mu = \sum_j \chi \mu_j + \nu, 
\]
where $\nu$ is a pseudomeromorphic current with
\[
\supp \nu \subset W_{\text{sing}}\cap \supp \chi=:A; 
\]
by Example ~\ref{vanlig} $A$ is
a \cqa\ of dimension $\leq \ell$.
By Lemma ~\ref{formel} ~$(i)$ $\supp (\F\nu)\subset A$, and by Lemma
~\ref{formel} ~$(iii)$ there are \cqa s $V_j\subset W$ of dimension
$\leq \ell$ such that $\supp (\F\mu_j)\subset V_j$. 
Thus, since $\F$ is linear, 
\[
\supp \big(\F(\chi \mu)\big )\subset \bigcup_j V_j \cup A=:\widetilde A.
\]
Since a finite union of \cqa s of dimension $\leq \ell$ is a \cqa\ of
dimension $\leq \ell$, $\widetilde A$ is a \cqa\ of dimension $\leq
\ell$. 
Therefore, using that the lemma holds for $m=\ell$, 
\[
\F^{c_\ell} \big (\F(\chi\mu)\big )=0. 
\]
Next, since $(1-\chi)\mu=0$ in $\tilde\V$, Lemma ~\ref{formel} ~$(i)$
gives that $\F^\kappa \big( (1-\chi)\mu\big )=0$ in $\tilde\V$ for
any $\kappa\geq 1$. 
We conclude that 
\[
\F^{c_\ell+1}\mu = \F^{c_\ell} \big (\F(\chi\mu)\big ) + \F^{c_\ell+1}
\big ((1-\chi)\mu \big ) = 0 
\]
in $\tilde\V$. Since $x$ was arbitrary
this proves the claim.  
\end{proof}


\begin{thebibliography}{99}


\bibitem{Aint1} \textsc{M.\ Andersson:} Integral representation with weights I.
\textit{Math.\ Ann.},  {\bf 326}, (2003), 1--18.



\bibitem{A1}\textsc{M.\ Andersson}
Residue currents and ideals of holomorphic functions.
\textit{Bull.\  Sci. Math.},  {\bf 128}, (2004), 481--512.


 
 



\bibitem{Aext} \textsc{M.\ Andersson:} Coleff-Herrera currents, duality, and Noetherian operators.
\textit{Bull.\  Soc.\ Math.\  France},  Bull. Soc. Math. France 139 (2011),  535--554.


 
 

 
\bibitem{AS} \textsc{M.\ Andersson, H.\ Samuelsson:}
 A Dolbeault-Grothendieck lemma on complex spaces via Koppelman formulas.
\textit{Invent. Math.} \textbf{190} (2012), 261--297. 

 
 


\bibitem{AW1} \textsc{M.\ Andersson, E.\ Wulcan:} Residue currents with prescribed annihilator ideals.
\textit{Ann. Sci. \'{E}cole Norm. Sup.}, \textbf{40} (2007), 985--1007.


\bibitem{AW2} \textsc{M.\ Andersson, E.\ Wulcan:} Decomposition of residue currents.
\textit{J. Reine Angew. Math.}, \textbf{638} (2010), 103--118.


\bibitem{AW3} \textsc{M.\ Andersson, E.\ Wulcan:} Direct images of semi-meromorphic currents.  \textit{arXiv 1411.4832} 
%
  

 


\bibitem{BM} \textsc{E.\ Bierstone \& P.\ Milman:} 
Semianalytic and subanalytic sets.
 \textit{Publ.\ math.\ I.H.\'E.S.\ } {\bf 67} (1988), 5--42.






\bibitem{Bj} \textsc{J-E.\ Bj\"ork:} Residues and $\D$-modules. 
\textit{The legacy of Niels Henrik Abel}
605–651, Springer, Berlin, 2004. 


 


\bibitem{CoHe}\textsc{N.\ Coleff \& M.\ Herrera:}
\textit{ Les courants r\'esiduels associ\'es \`a une forme m\'eromorphe. (French) [The residue currents associated with a meromorphic form]}
 \textit{Lecture Notes in Mathematics}
{\bf  633}. Springer, Berlin, 1978. x+211 pp. 



\bibitem{DS} \textsc{A.\ Dickenstein, C.\ Sessa:}
\textit{Invent. Math.}, \textbf{80} (1985), 417--434. 

\bibitem{DS2}\textsc{A.\ Dickenstein, C.\ Sessa:}
R\'esidus de formes m\'eromorphes et cohomologie mod\'er\'ee,
\textit{G\'eom\'etrie complexe, Paris, 1992}
Actualit\'es Sci. Indust.
\textbf{1438}, 1996, 35--59.

 

 
\bibitem{HeLi}\textsc{M.\ Herrera \& D.\ Liebermann:}
Residues and principal values on complex spaces. 
\textit{Math. Ann.}, \textbf{194} (1971), 259--294. 

 
 

\bibitem{LS} \textsc{R.\ L\"ark\"ang, H.\ Samuelsson Kalm:} Various
  approaches to 
products of residue currents. \textit{J Func.\ Analysis}
{\bf 264} (2013), 118--138.

 

 



\bibitem{M}
\textsc{B. Malgrange:}
Sur les fonctions diff\'erentiables et les ensembles analytiques
\textit{Bull.\ Soc.\ Math.\ France} {\bf 91} (1963), 113--127

 

\bibitem{PTY}\textsc{M.\ Passare \& A.\ Tsikh \&  A.\ Yger:}
 Residue currents of the Bochner-Martinelli type.
 \textit{Publ.\ Mat.}  {\bf 44} (2000), 85--117.

 

\bibitem{S}\textsc{H.\ Samuelsson Kalm:}
Integral representation of moderate cohomology. \textit{In preparation.}

 


 




\end{thebibliography}
\end{document}